\DeclareMathOperator{\Aut}{Aut}
\DeclareMathOperator{\coker}{coker}
\DeclareMathOperator{\End}{End}
\DeclareMathOperator{\Fix}{Fix}
\DeclareMathOperator{\Id}{Id}
\DeclareMathOperator{\Inn}{Inn}
\DeclareMathOperator{\im}{Im}
\DeclareMathOperator{\SpecR}{Spec_R}
\DeclareMathOperator{\Stab}{Stab}
\newcommand*{\eg}{e.g.\ }
\newcommand*{\fg}{finitely generated\ }
\newcommand*{\fgtf}{finitely generated torsion-free\ }
\newcommand*{\grpgen}[1]{\left\langle{#1}\right\rangle}
\newcommand*{\grppres}[2]{\langle #1 \mid #2 \rangle}
\newcommand*{\hirsch}{\mathfrak{h}}
\newcommand*{\I}{\mathcal{I}}
\newcommand*{\ie}{i.e.\ }
\newcommand*{\ihat}{\hat{\imath}}
\newcommand*{\inftynorm}[1]{|#1|_{\infty}}
\newcommand*{\inn}[1]{\tau_{#1}}
\newcommand*{\invb}[1]{\inv{\left(#1\right)}}
\newcommand*{\inv}[1]{{#1}^{-1}}
\newcommand*{\N}{\mathbb{N}}
\newcommand*{\Rconj}[1]{\sim_{#1}}
\newcommand*{\Rinf}{R_{\infty}}
\newcommand*{\Reid}{\mathcal{R}}
\newcommand*{\restr}[2]{#1|_{#2}}
\newcommand*{\restrb}[2]{(#1)|_{#2}}
\newcommand*{\size}[1]{\left| #1 \right|}
\newcommand*{\tf}{torsion-free}
\newcommand*{\Z}{\mathbb{Z}}
\renewcommand{\phi}{\varphi}
\let\originalleft\left
\let\originalright\right
\renewcommand{\left}{\mathopen{}\mathclose\bgroup\originalleft}
\renewcommand{\right}{\aftergroup\egroup\originalright}
\declaretheorem[style=definition, name = Definition, numberwithin=section]{defin}
\declaretheorem[style=definition, name = Example, sibling=defin]{example}
\declaretheorem[name = Theorem, sibling=defin]{theorem}
\declaretheorem[name = Lemma, sibling=defin]{lemma}
\declaretheorem[name = Proposition, sibling=defin]{prop}
\declaretheorem[name = Corollary, sibling=defin]{cor}
\declaretheorem[style=remark, name = Remark, numbered = no]{remark}
\declaretheorem[name = Theorem, numbered = no]{theorem*}
\declaretheorem[name = Question, sibling=defin]{quest}
\numberwithin{equation}{section}
\crefname{prop}{Proposition}{Propositions}
\crefname{quest}{Question}{Questions}
\crefname{cor}{Corollary}{Corollaries}
\newcommand\extrafootertext[1]{%
    \bgroup
    \renewcommand\thefootnote{\fnsymbol{footnote}}%
    \renewcommand\thempfootnote{\fnsymbol{mpfootnote}}%
    \footnotetext[0]{#1}%
    \egroup
}
\title{The product formula for Reidemeister numbers on nilpotent groups}
\author{Pieter Senden}
\date{}                                           % Activate to display a given date or no date
\begin{document}

\maketitle

\begin{abstract}
We study the product formula for Reidemeister numbers on \fgtf nilpotent groups in two ways.
On the one hand, we generalise the product formula to central extensions.
On the other hand, we derive general results for \fg (\tf) nilpotent groups from the product formula:
we provide a strong tool to prove that an endomorphism on a \fg nilpotent group has infinite Reidemeister number and compute Reidemeister numbers on finite index subgroups of \fgtf nilpotent groups.
Using the latter, we provide an infinite family of groups with full Reidemeister spectrum.
\end{abstract}

\extrafootertext{E-mail: \url{pieter.senden@telenet.be}, ORCID: 0000-0002-3107-6775}
\extrafootertext{2020 \emph{Mathematics Subject Classification}. Primary: 20E45, 20F18; Secondary: 20E22.}
\extrafootertext{\emph{Keywords and phrases:} twisted conjugacy, Reidemeister numbers, Reidemeister spectrum, nilpotent groups, central extensions}

\section{Introduction}
Let \(G\) be a group and \(\phi \in \End(G)\) an endomorphism.
Two elements \(x, y \in G\) are \emph{\(\phi\)-conjugate} if \(x = zy \inv{\phi(z)}\) for some \(z \in G\).
This defines an equivalence relation on \(G\), and the number of equivalence classes, \(R(\phi)\), is called the \emph{Reidemeister number} of \(\phi\).
If \(\phi\) is not specified, we also speak of \emph{twisted conjugacy}.
The set \(\SpecR(G) := \{R(\psi) \mid \psi \in \Aut(G)\}\) is called the \emph{Reidemeister spectrum} of \(G\).
If \(\SpecR(G) = \{\infty\}\), then \(G\) is said to have the \emph{\(\Rinf\)-property}.
If \(\SpecR(G) = \N_{0} \cup \{\infty\}\), then \(G\) is said to have \emph{full Reidemeister spectrum}.

Twisted conjugacy and Reidemeister numbers arise from Nielsen fixed-point theory \cite{Jiang83}.
One of the goals in this field is to determine the minimal number of fixed points of maps homotopic with a given continuous map \(f: X \to X\) on a compact manifold \(X\).
A lower bound for this minimal number is the Nielsen number \(N(f)\), in the sense that \(N(f) \leq \size{\Fix(g)}\) for all \(g\) homotopic to \(f\).
For large classes of manifolds (\eg manifolds of dimension at least \(3\) \cite{Wecken40,Wecken41,Wecken41a}), \(N(f)\) is in fact the minimum, \ie there is a map \(g\) homotopic to \(f\) with \(N(f) = \size{\Fix(g)}\).
Additionaly, for nilmanifolds, the Reidemeister number of the induced map \(f_{*}: \pi_{1}(X) \to \pi_{1}(X)\) on the fundamental group yields complete information about \(N(f)\): either \(N(f) = R(f_{*})\) if the latter is finite, or \(N(f) = 0\) if \(R(f_{*}) = \infty\) \cite{HeathKeppelmann97}.
As the fundamental group of a nilmanifold is nilpotent, this is a topological explanation why twisted conjugacy in nilpotent groups has been extensively studied.

Algebraically speaking, studying twisted conjugacy in nilpotent groups is very accessible due to the so-called product formula and its criterion about infinite Reidemeister numbers:
\begin{theorem}[{Product formula, \cite[Theorem~2.6]{Romankov11}}]	\label{theo:productFormulaFGTFNilpotentGroups}
	Let \(N\) be a \fgtf nilpotent group and let \(\phi \in \End(N)\). Suppose that
	\[
		1 = N_{0} \lhd N_{1} \lhd \ldots \lhd N_{c - 1} \lhd N_{c} = N
	\]
	is a central series of \(N\) such that
	\begin{itemize}
		\item for all \(i \in \{0, \ldots, c\}\), \(\phi(N_{i}) \leq N_{i}\),
		\item for all \(i \in \{1, \ldots, c\}\), \(N_{i} / N_{i - 1}\) is torsion-free.
	\end{itemize}
	Let \(\phi_{i}\) denote the induced endomorphism on \(\frac{N_{i}}{N_{i - 1}}\) for \(i \in \{1, \ldots, c\}\). Then
	\[
		R(\phi) = \prod_{i = 1}^{c} R(\phi_{i}).
	\]
	In particular, \(R(\phi) = \infty\) if and only if \(R(\phi_{i}) = \infty\) for some \(i \in \{1, \ldots, c\}\)
\end{theorem}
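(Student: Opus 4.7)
The plan is to induct on the length $c$ of the central series, reducing the general statement to a single central extension and analysing the fibres of the induced map on Reidemeister classes. The base case $c = 1$ is immediate since $N = N_1$ and $\phi = \phi_1$. For the inductive step it suffices to treat a single central extension $1 \to A \to N \to Q \to 1$ with $A, Q$ finitely generated torsion-free nilpotent and $A$ a $\phi$-invariant central subgroup: applying this to $1 \to N_1 \to N \to N/N_1 \to 1$ yields $R(\phi) = R(\phi_1) R(\bar\phi)$, and since $N/N_1$ inherits a central series of length $c - 1$ satisfying the theorem's hypotheses, induction on $c$ supplies $R(\bar\phi) = \prod_{i=2}^c R(\phi_i)$.

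For the single-extension step, consider the natural surjection $p \colon \Reid(\phi) \to \Reid(\bar\phi)$, $[x]_\phi \mapsto [\bar x]_{\bar\phi}$. If $R(\bar\phi) = \infty$, then $R(\phi) = \infty$ by surjectivity and the formula holds. Suppose $R(\bar\phi) < \infty$. Fix a lift $y$ of a representative of some class $[\bar y]_{\bar\phi}$ and consider $\alpha_y \colon A \to p^{-1}([\bar y]_{\bar\phi})$, $a \mapsto [ya]_\phi$. Since $A$ is central, $\alpha_y$ is well-defined and surjective, and a direct rearrangement of the defining equation of $\phi$-conjugacy shows that $\alpha_y(a) = \alpha_y(b)$ exactly when $a\inv{b}$ lies in the image of
\[
\Psi_y \colon S \to A, \qquad \Psi_y(z) = \inv{y} z y \inv{\phi(z)},
\]
where $S$ is the preimage in $N$ of $\Fix(\inn{\bar y} \circ \bar\phi) \leq Q$. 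The restriction of $\Psi_y$ to $A$ is $a \mapsto a \inv{\phi_1(a)}$, hence $(\Id - \phi_1)(A) \subseteq \Psi_y(S)$ and the fibre above $[\bar y]_{\bar\phi}$ has cardinality $[A : \Psi_y(S)] \leq R(\phi_1)$.

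To upgrade this bound to equality, I would invoke the auxiliary fact that any endomorphism $\psi$ of a finitely generated torsion-free nilpotent group with $R(\psi) < \infty$ satisfies $\Fix(\psi) = \{1\}$. Applied to $\inn{\bar y} \circ \bar\phi$---which has the same Reidemeister number as $\bar\phi$, since composition with an inner automorphism leaves the Reidemeister number invariant---this yields $\Fix(\inn{\bar y} \circ \bar\phi) = \{1\}$, hence $S = A$ and $\Psi_y(S) = (\Id - \phi_1)(A)$. Every fibre of $p$ therefore has size exactly $R(\phi_1)$, and summing over the $R(\bar\phi)$ classes produces $R(\phi) = R(\phi_1) R(\bar\phi)$.

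The main obstacle is the auxiliary fact, which has to be proved separately. A natural attempt is to induct on the Hirsch length of $G$: project a hypothetical nontrivial fixed element to the torsion-free quotient of the abelianization, and if its image there is nontrivial, invoke the abelian case to get $R(\psi) = \infty$ via the standard surjection of Reidemeister class sets; if the image is trivial, the fixed element lands in the isolator of the commutator subgroup and one would like to descend. The subtle point is that $R(\psi) < \infty$ does not in general pass to $R$-values on $\psi$-invariant subgroups, so the descent requires extra care---for instance through the Mal'cev completion, where fixed points correspond cleanly to a $1$-eigenspace of the induced $\Q$-linear map on the associated Lie algebra.
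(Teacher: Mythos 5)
Your reduction, fibre analysis, and identification of the key auxiliary fact all track the paper's proof closely: the paper likewise inducts on \(c\), isolates the single central extension, and reduces the fibre count of \(p\) to the index in \(C\) of the subgroup \([g, \pi^{-1}(\Stab_{\bar\phi}(gC))]^\phi\), concluding equality once twisted stabilisers of \(\bar\phi\) are shown to be trivial. Where your argument has a genuine gap is precisely where you flag one: the auxiliary fact that an endomorphism \(\psi\) of a finitely generated torsion-free nilpotent group with \(R(\psi) < \infty\) has \(\Fix(\psi) = \{1\}\) (more precisely, all twisted stabilisers trivial, since you apply it to the various \(\inn{\bar y}\circ\bar\phi\)) is not proved, and the two routes you sketch do not close it.

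The Hirsch-length induction stalls exactly as you say: when a hypothetical nontrivial fixed point falls into \(\sqrt[N]{\gamma_2(N)}\), passing to that subgroup and concluding \(R(\psi) = \infty\) from an infinite Reidemeister number on a subgroup is precisely what \cref{prop:InfiniteReidemeisterNumberOnSubgroupFGTFNilpotentGroup} provides, but that proposition is itself a downstream consequence of the product formula, so using it here would be circular. The Mal'cev-completion route has the same problem: a nontrivial fixed point of \(\psi\) in \(N\) gives a \(1\)-eigenvector of \(d\psi\) on the Lie algebra, hence a \(1\)-eigenvalue on some graded piece, hence \(R(\psi_i) = \infty\) for some term \(\psi_i\) of the adapted lower central series — but converting ``\(R(\psi_i) = \infty\) for some \(i\)'' into ``\(R(\psi) = \infty\)'' when \(i > 1\) is again the product formula. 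The only case that comes for free is when \(1\) is an eigenvalue on the abelianisation, which \cref{lem:LowerBoundReidemeisterNumberByQuotient} handles, but a nontrivial fixed point of \(\psi\) in \(N\) can land entirely inside \(\sqrt[N]{\gamma_2(N)}\) and contribute nothing on the abelianisation.

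The paper closes this gap by an independent, non-circular argument that does not use nilpotency at all: it proves (\cref{prop:infiniteTwistedStabilisersImpliesInfiniteReidemeisterNumber}) that for any residually finite group of type \((F)\), an infinite twisted stabiliser forces \(R(\phi) = \infty\). The key inputs are Jabara's doubly-exponential bound \(\size{\Fix(\phi)} \leq 2^{2^{R(\phi)}}\) for endomorphisms of \emph{finite} groups (\cref{lem:SizeFixpointsBoundedByReidemeisterNumber}), the Hsu--Wise construction (\cref{lem:FullyCharacteristicFiniteIndexSubgroup}) of a fully characteristic finite-index subgroup inside any finite-index subgroup of a type-\((F)\) group, and the monotonicity \(R(\phi) \geq R(\bar\phi)\) under passing to quotients. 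Residual finiteness produces arbitrarily fine finite quotients retaining many distinct fixed points, and the bound then forces \(R(\phi)\) to be unbounded. Torsion-freeness upgrades ``finite stabilisers'' to ``trivial stabilisers'' (\cref{cor:twistedStabiliserFGTFResiduallyFinite}). This is the missing ingredient your proposal needs, and you should supply it (or cite it) rather than the Mal'cev sketch, since that sketch cannot be completed without assuming the theorem you are proving.
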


The power of the product formula comes from the fact that Reidemeister numbers on free abelian groups of finite rank are essentially given by determinants. For \(n \in \Z\), we put
\[
	\inftynorm{n} = \begin{cases}
		|n|	&	\mbox{if } n \ne 0 \\
		\infty	&	\mbox{if } n = 0.
	\end{cases}
\]
\begin{prop}[{\cite[\S~3]{Romankov11}}]	\label{prop:ReidemeisterNumbersFreeAbelianGroups}
	Let \(A\) be a \fgtf abelian group and let \(\phi \in \End(A)\). Then
	\[
		R(\phi) = \size{\coker(\phi - \Id_{A})} = [A : \im(\phi - \Id_{A})] = \inftynorm{\det(\phi - \Id_{A})}.
	\]
	In particular, \(R(\phi) = \infty\) if and only if \(\phi\) has a non-trivial fixed point.
\end{prop}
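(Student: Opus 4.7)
Since \(A\) is abelian, I would switch to additive notation: two elements \(x, y \in A\) are \(\phi\)-conjugate iff there exists \(z \in A\) with \(x = z + y - \phi(z)\), i.e.\ \(x - y \in \im(\Id_{A} - \phi) = \im(\phi - \Id_{A})\). Thus the twisted conjugacy classes are exactly the cosets of \(\im(\phi - \Id_{A})\) in \(A\), which immediately yields
\[
	R(\phi) = [A : \im(\phi - \Id_{A})] = \size{\coker(\phi - \Id_{A})}.
\]
This is the conceptual heart of the proposition; the rest is linear algebra.

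Next I would identify \(A\) with \(\Z^{n}\) where \(n = \rk(A)\), so that \(\phi - \Id_{A}\) is represented by some integer matrix \(M\). Using the Smith normal form, I can find invertible integer matrices \(P, Q\) such that \(PMQ = \Diag(d_{1}, \ldots, d_{n})\), with each \(d_{i} \in \Z_{\geq 0}\) (allowing \(d_{i} = 0\)). Since \(P, Q\) induce automorphisms of \(\Z^{n}\), we have
\[
	\coker(\phi - \Id_{A}) \cong \bigoplus_{i = 1}^{n} \ZmodZ{d_{i}},
\]
with the convention \(\ZmodZ{0} = \Z\). Its size is \(\prod_{i = 1}^{n} \inftynorm{d_{i}}\), which equals \(\inftynorm{\prod_{i} d_{i}} = \inftynorm{\det(PMQ)} = \inftynorm{\det(\phi - \Id_{A})}\) because \(\det(P), \det(Q) \in \{\pm 1\}\).

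Finally, for the last assertion I would argue that \(R(\phi) = \infty\) iff the cokernel is infinite iff some \(d_{i}\) equals \(0\) iff \(\det(\phi - \Id_{A}) = 0\) iff \(\ker(\phi - \Id_{A})\) is non-trivial (using torsion-freeness of \(A\) so that the kernel of a non-injective integer matrix contains a non-zero element), which in turn is equivalent to \(\phi\) admitting a non-trivial fixed point.

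I don't anticipate a real obstacle here: the only subtlety worth flagging is the use of \(\inftynorm{\cdot}\), which is precisely designed so that the identity \(\size{\coker(\phi - \Id_{A})} = \inftynorm{\det(\phi - \Id_{A})}\) holds uniformly whether or not \(\phi - \Id_{A}\) is injective.
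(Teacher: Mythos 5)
The paper does not prove this proposition; it cites \cite[\S~3]{Romankov11} and uses it as a black box (later restating the first equality in the more general form of \cref{prop:GroupStructureReidemeisterClassesAbelianGroups}, also without proof). So there is no in-paper argument to compare against. Your proof is correct and is the standard one: the observation that in an abelian group twisted conjugacy classes are exactly the cosets of $\im(\phi - \Id_A)$, followed by Smith normal form to express the cokernel's size as the determinant, followed by the determinant-zero / non-injectivity / non-trivial-fixed-point equivalence. One tiny remark: torsion-freeness of $A$ is used to identify $A$ with $\Z^n$; once that identification is made, the implication ``$\det(\phi-\Id_A)=0 \Rightarrow \ker(\phi-\Id_A)\neq 0$'' is automatic for integer matrices (clear denominators from a rational kernel vector), so your parenthetical could be phrased more precisely, but the argument is sound.
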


Consequently, on \fgtf nilpotent groups, calculating Reidemeister numbers boils down to calculating determinants.

The product formula and the criterion for infinite Reidemeister numbers form the key tool for various results: for free nilpotent groups \cite{GoncalvesWong09,Romankov11,DekimpeGoncalves14}, nilpotent quotients of surface groups \cite{DekimpeGoncalves16} and nilpotent quotients of Baumslag-Solitar groups \cite{DekimpeGoncalves20}, it has been classified which of them have the \(\Rinf\)-property; the complete Reidemeister spectrum is known for the free nilpotent groups of class \(2\) \cite{DekimpeTertooyVargas20} and several \(2\)-step nilpotent quotients of right-angled Artin groups \cite{DekimpeLathouwers23}.

In addition, nilpotent groups have often formed a tool to prove that non-nilpotent groups have the \(\Rinf\)-property, such as right-angled Artin groups \cite{Witdouck24} and pure Artin braid groups \cite{DekimpeGoncalvesOcampo21}.

In contrast to its intensive usage, the product formula itself and its potential for \fgtf nilpotent groups in general has been studied a lot less. We only know of  \cite{FelshtynKlopsch22}, where A.\ Fel'shtyn and B.\ Klopsch have generalised the product formula to bi-twisted conjugacy of endomorphisms on \tf{} nilpotent groups of finite Pr\"{u}fer rank. 

The goal of this paper is therefore twofold. Firstly, we want to investigate more closely which part of the product formula fully relies on nilpotency and which part can be proven under weaker conditions. This leads to the study of Reidemeister numbers in central extensions. More specifically, we obtain an addition formula (\cref{theo:sumFormulaReidemeisterNumberCentralExtension}) and a more general product formula (\cref{prop:productFormulaCentralExtension}). Secondly, we want to study the applications of the product formula in general nilpotent groups, instead of applying it to concrete families of groups. This leads to strong tools to show that an endomorphism of a \fg nilpotent group -- not necessarily torsion-free -- has infinite Reidemeister number (\cref{prop:InfiniteReidemeisterNumberOnSubgroupFGTFNilpotentGroup,theo:InfiniteReidemeisterNumberOnSubgroupFGNilpotentGroup}) and to the study of Reidemeister numbers on finite index subgroups of nilpotent groups (\cref{theo:ReidemeisterNumberEndomorphismFiniteIndexNormalSubgroupFGTFNilpotentSubgroupEqualsReidemeisterNumber}). We use the latter to establish a new infinite family of groups with full Reidemeister spectrum.

We fix some notation for this article. For a group \(G\), an endomorphism \(\phi \in \End(G)\), and a subgroup \(H\) with \(\phi(H) \leq H\), we write \(\restr{\phi}{H} \in \End(H)\) for the restriction of \(\phi\) to \(H\). If, additionally, \(H\) is also normal, we write \(\bar{\phi}\) for the induced endomorphism on \(G / H\); if there are multiple (normal) subgroups involved, we will clarify with respect to which subgroup we take the quotient. Finally, for \(g \in G\), we let \(\inn{g}\) denote the inner automorphism associated with \(g\), \ie \(\inn{g}(x) = gx\inv{g}\) for all \(x \in G\).

\section{Central extensions}
The reason to look into central extensions to generalise the product formula for \fgtf nilpotent groups is an upper bound for Reidemeister numbers in central extensions:
\begin{lemma}	\label{lem:InequalityCentralExtensions}
	Let \(G\) be a group, \(C\) a central subgroup and \(\phi \in \End(G)\) such that \(\phi(C) \leq C\). Then
	\begin{equation}	\label{eq:InequalityCentralExtensions}
		R(\phi) \leq R(\restr{\phi}{C}) R(\bar{\phi}).
	\end{equation}
\end{lemma}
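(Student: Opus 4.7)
The plan is to prove this by fibering the $\phi$-conjugacy classes of $G$ over the $\bar{\phi}$-conjugacy classes of $G/C$ and bounding the size of each fiber by $R(\restr{\phi}{C})$. If either of the factors on the right-hand side is infinite, the inequality is trivial, so I may assume both are finite.

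First, the projection $\pi: G \to G/C$ sends $\phi$-conjugacy classes to $\bar\phi$-conjugacy classes: indeed, if $x = zy\inv{\phi(z)}$ in $G$, then $\pi(x) = \pi(z)\pi(y)\inv{\bar{\phi}(\pi(z))}$ in $G/C$. This gives a well-defined surjection on the class sets, so it suffices to show that for each $\bar{\phi}$-class $[\pi(x)]_{\bar\phi}$ the preimage $\pi^{-1}([\pi(x)]_{\bar\phi})$ decomposes into at most $R(\restr{\phi}{C})$ many $\phi$-classes in $G$.

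The key computation is that, because $C$ is central, an arbitrary element $y \in \pi^{-1}([\pi(x)]_{\bar\phi})$ can be written as $y = gx\inv{\phi(g)}c$ for some $g \in G$ and $c \in C$; then
\[
    g(xc)\inv{\phi(g)} = gxc\inv{\phi(g)} = gx\inv{\phi(g)}c = y,
\]
since $c$ commutes with $\inv{\phi(g)}$. Hence $y \sim_{\phi} xc$, which shows that every class in the fiber has a representative of the form $xc$ with $c \in C$. Next, I will show the map $C \to \pi^{-1}([\pi(x)]_{\bar{\phi}})/\!\!\sim_\phi$, $c \mapsto [xc]_{\phi}$, descends to $\restr{\phi}{C}$-conjugacy classes of $C$. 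Indeed, if $c = dc'\inv{\phi(d)}$ with $d \in C$, then, again using that $d$ is central,
\[
    d(xc')\inv{\phi(d)} = xdc'\inv{\phi(d)} = xc,
\]
so $xc \sim_{\phi} xc'$. This yields a surjection from the $R(\restr{\phi}{C})$ many $\restr{\phi}{C}$-conjugacy classes of $C$ onto the set of $\phi$-classes in the given fiber.

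Multiplying the fiber bound $R(\restr{\phi}{C})$ by the number $R(\bar{\phi})$ of fibers yields \eqref{eq:InequalityCentralExtensions}. The only place where centrality is genuinely used is in the two displayed computations above, where $c$ (resp.\ $d$) has to commute with $\inv{\phi(g)}$ (resp.\ with $x$); this is the one delicate point, and it is what prevents the argument from working for a non-central normal subgroup.
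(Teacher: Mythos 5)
Your proof is correct and self-contained. The paper itself does not prove \cref{lem:InequalityCentralExtensions} at the point of statement (it cites \cite{GoncalvesWong09}); it later rederives the inequality as a corollary of the Kim--Lee--Lee addition formula (\cref{lem:GeneralAdditionFormula}) together with the observation that, since \(\inn{g}\) is trivial on the central subgroup \(C\), the domain of each map \(\ihat_g\) is \(\Reid[\restr{\phi}{C}]\), so \(\size{\im\ihat_g}\le R(\restr{\phi}{C})\). Your argument is the same fibering-over-\(\Reid[\bar\phi]\) idea, but you reprove the relevant piece of the addition formula from scratch: you show directly that every \(\phi\)-class over a fixed \(\bar\phi\)-class \([xC]_{\bar\phi}\) has a representative of the form \(xc\) with \(c\in C\), and that the assignment \([c]_{\restr{\phi}{C}}\mapsto [xc]_\phi\) is a well-defined surjection onto the \(\phi\)-classes in that fibre. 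This buys you a proof that does not depend on the external lemma; what the paper's route buys, in exchange for invoking \cref{lem:GeneralAdditionFormula}, is the sharper exact formula of \cref{theo:sumFormulaReidemeisterNumberCentralExtension}, of which the inequality is only a weak consequence. One small presentational point: your reduction to the case where both factors are finite is harmless but unnecessary, since the fibre bound and the surjectivity of the projection hold regardless of cardinalities.
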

In \cite[Lemma~1.1(3)]{GoncalvesWong09}, the inequality is in fact stated as an equality;
however, equality does not hold true in general.
The simplest counterexample is to consider the cyclic group \(C_{4} = \grpgen{x}\) of order \(4\), the subgroup \(C\) generated by \(x^{2}\) and the inversion map \(\phi: C_{4} \to C_{4}: x^{i} \mapsto x^{-i}\).
Straightforward computations show that each of the maps \(\phi, \restr{\phi}{C}\) and \(\bar{\phi}\) has Reidemeister number \(2\). 
Hence,
\[
	R(\phi) = 2 < 4 = R(\restr{\phi}{C}) R(\bar{\phi}).
\]
For \fgtf nilpotent groups, however, equality does hold and it forms the lion's share of the classical proof of the product formula for nilpotent groups.
We will argue that residually finiteness instead of nilpotency, together with torsion-freeness and a slight weakening of being finitely generated, suffice to show that equality holds (\cref{prop:productFormulaCentralExtension}).

In an earlier paper \cite[Theorem~2.1]{GoncalvesWong05}, D.\ Gon\c{c}alves and P.\ Wong also mention that equality holds in \eqref{eq:InequalityCentralExtensions}, albeit starting from a different formula:
\begin{theorem}	\label{theo:AdditionFormula}
	Let \(G\) be a group, \(N\) a normal subgroup, and \(\phi \in \End(G)\) such that \(\phi(N) \leq N\). Suppose that \(\Fix(\inn{gN} \circ \bar{\phi}) = 1\) for all \(g \in G\). Then
	\[
		R(\phi) = \sum_{[gN]_{\bar{\phi}} \in \Reid[\bar{\phi}]} R(\restrb{\inn{g} \circ \phi}{N}).
	\]
	
	If, moreover, \(R(\restrb{\inn{g} \circ \phi}{N})\) is independent of \(g\), then
	\[
		R(\phi) = R(\restr{\phi}{N}) R(\bar{\phi}).
	\]
\end{theorem}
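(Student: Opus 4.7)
The plan is to analyse $\phi$-conjugacy classes in $G$ by projecting to $G/N$. Since $\phi(N) \leq N$ and $N$ is normal, the quotient map $p\colon G \to G/N$ sends $\phi$-conjugate elements to $\bar{\phi}$-conjugate ones; hence the partition of $G$ into $\phi$-classes refines the partition by preimages of $\bar{\phi}$-classes, and
\[
R(\phi) \;=\; \sum_{[gN]_{\bar{\phi}} \in \Reid[\bar{\phi}]} \#\bigl\{\phi\text{-classes contained in } p^{-1}([gN]_{\bar{\phi}})\bigr\}.
\]
The heart of the proof is identifying each inner count with $R(\restrb{\inn{g} \circ \phi}{N})$ for any chosen representative $g$ of the class.

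Fix such a representative $g$. First, every $x \in p^{-1}([gN]_{\bar{\phi}})$ is $\phi$-conjugate in $G$ to some element of $Ng$: writing $x = yg\phi(y)^{-1}n$ with $y \in G$ and $n \in N$, the $\phi$-action of $\inv{y}$ sends $x$ to $g\phi(y)^{-1}n\phi(y)$, which lies in $gN = Ng$ by normality of $N$. Hence it suffices to determine when two elements $n_1 g$ and $n_2 g$ (with $n_1, n_2 \in N$) are $\phi$-conjugate in $G$. Suppose $n_1 g = z\, n_2 g\, \phi(z)^{-1}$ for some $z \in G$. Projecting to $G/N$ gives $gN = zN \cdot gN \cdot \bar{\phi}(zN)^{-1}$, which rearranges to $zN = (\inn{gN} \circ \bar{\phi})(zN)$; by hypothesis this fixed-point set is trivial, so $z \in N$. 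Setting $\psi := \restrb{\inn{g} \circ \phi}{N}$ and right-multiplying the equation by $\inv{g}$ then yields
\[
n_1 \;=\; z\, n_2 \cdot g\phi(z)^{-1}\inv{g} \;=\; z\, n_2\, \inv{\psi(z)},
\]
so $n_1 \sim_{\psi} n_2$ in $N$. The reverse implication follows by reversing these manipulations (the conjugator $z \in N$ still lies in $G$), so the $\phi$-classes inside $p^{-1}([gN]_{\bar{\phi}})$ are in bijection with the $\psi$-classes of $N$, whose number is $R(\psi)$.

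The main obstacle, in my view, lies in that second step: one must choose the correct parametrisation of the fibre (by $Ng$ rather than $gN$) so that the natural endomorphism of $N$ which emerges is exactly $\inn{g} \circ \phi$ and not a twist by $\inn{\phi(g)}$, and the fixed-point hypothesis has to be invoked at precisely the right moment to reduce the conjugator from $G$ down to $N$. Given the addition formula, the second assertion is immediate: if $R(\restrb{\inn{g} \circ \phi}{N})$ does not depend on $g$, then evaluating at $g = 1_G$ identifies the common value as $R(\restr{\phi}{N})$, and the sum collapses to $R(\restr{\phi}{N}) \cdot R(\bar{\phi})$.
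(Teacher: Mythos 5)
Your argument is correct. Note, however, that the paper does not prove this theorem at all: it quotes it from Gon\c{c}alves--Wong \cite{GoncalvesWong05} and moves on, so there is no in-paper proof to compare against. What the paper \emph{does} develop is a parallel route via \cref{lem:GeneralAdditionFormula}, which already gives $R(\phi) = \sum_{[gN]_{\bar\phi}} \size{\im \ihat_g}$ for the canonical maps $\ihat_g\colon \Reid[\restrb{\inn{g}\circ\phi}{N}] \to \Reid[\inn{g}\circ\phi]$. Your two steps are exactly the two halves of that lemma plus the new input: the reduction of an arbitrary fibre element to one of the form $ng$ is the surjectivity of $\ihat_g$, and your use of $\Fix(\inn{gN}\circ\bar\phi)=1$ to force the conjugator $z$ into $N$ is precisely the injectivity of $\ihat_g$, which is the only point where the hypothesis enters. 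So you have in effect re-derived \cref{lem:GeneralAdditionFormula} inline and then supplied the missing injectivity argument, which is the natural direct proof. Your caution about parametrising the fibre by $Ng$ rather than $gN$ is sound but slightly overcautious: the other choice produces $\restrb{\phi\circ\inn{g}}{N}$ instead, which has the same Reidemeister number by the general identity $R(\alpha\beta)=R(\beta\alpha)$, so either parametrisation yields the stated sum --- yours simply gives the formula verbatim.
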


They formulate it more generally, but for the purpose of this paper, we restrict ourselves to this formulation.
They remark that the condition for the second equality to hold is met for central extensions, which thus proves equality in \eqref{eq:InequalityCentralExtensions}.

There are two assumptions needed to prove the equality \eqref{eq:InequalityCentralExtensions} using \cref{theo:AdditionFormula}: the triviality of \(\Fix(\inn{gN} \circ \bar{\phi})\) for all \(g \in G\), and the independence of \(R(\restrb{\inn{g} \circ \phi}{N})\) of \(g\).
The latter comes from central extensions, the former from residual finiteness, as we will see.

Gon\c{c}alves and Wong start from the assumption on the fixed point subgroups to obtain an addition formula, then provide a broad class of extensions for which the Reidemeister numbers \(R(\restrb{\inn{g} \circ \phi}{N})\) are independent of \(g\), namely the central extensions -- which would explain their formulation in \cite[Lemma~1.1(3)]{GoncalvesWong09}.

In this article, we provide a sort of converse: we start from a central extension to obtain an addition formula, then provide a broad class of extensions and endomorphisms for which the groups \(\Fix(\inn{gN} \circ \bar{\phi})\) are trivial for all \(g\) (\cref{cor:twistedStabiliserFGTFResiduallyFinite}).
This results in a broad class of central extensions for which equality in \eqref{eq:InequalityCentralExtensions} holds (\cref{prop:productFormulaCentralExtension}).

\subsection{Addition formula for central extensions}
Our starting point is another addition formula for Reidemeister numbers. 
\begin{lemma}[{\cite[Lemma~2.1(2)]{KimLeeLee05}}]	\label{lem:GeneralAdditionFormula}
Let \(G\) be a group, \(N\) a normal subgroup, and \(\phi \in \End(G)\) such that \(\phi(N) \leq N\). For each \(g \in G\), let 
\[
	\ihat_{g}: \Reid[\restrb{\inn{g} \circ \phi}{N}]  \to \Reid[\inn{g} \circ \phi]
\]
be the natural inclusion of Reidemeister classes. Then

	\[
	R(\phi) = \sum_{[gN]_{\bar{\phi}} \in \Reid[\bar{\phi}]} \size{\im \ihat_{g}}
	\]
\end{lemma}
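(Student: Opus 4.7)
The plan is to decompose \(\Reid[\phi]\) along the natural projection to \(\Reid[\bar{\phi}]\) and identify each fiber with the image of \(\ihat_{g}\). Since \(\phi(N) \leq N\), the quotient map \(G \to G/N\) induces a well-defined surjection
\[
    p : \Reid[\phi] \to \Reid[\bar{\phi}], \quad [x]_{\phi} \mapsto [\bar{x}]_{\bar{\phi}},
\]
and hence
\[
    R(\phi) = \sum_{[gN]_{\bar{\phi}} \in \Reid[\bar{\phi}]} \size{p^{-1}([gN]_{\bar{\phi}})}.
\]
The task is therefore to show \(\size{p^{-1}([gN]_{\bar{\phi}})} = \size{\im \ihat_{g}}\) for each representative \(g\).

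Fix \(g \in G\). I would first argue that every \(x \in G\) with \(\bar{x} \Rconj{\bar{\phi}} \bar{g}\) is \(\phi\)-conjugate in \(G\) to some element of the coset \(gN\): lifting \(\bar{x} = \bar{z}\bar{g}\bar{\phi}(\bar{z})^{-1}\) yields \(x = zg\phi(z)^{-1}n_{0}\) for some \(z \in G\) and \(n_{0} \in N\), and then \(\inv{z} x \phi(z) = g \cdot \phi(z)^{-1} n_{0} \phi(z)\) lies in \(gN\) by normality of \(N\), while remaining \(\phi\)-conjugate to \(x\). Consequently, \(p^{-1}([gN]_{\bar{\phi}})\) is in bijection with the set of equivalence classes obtained by restricting \(\phi\)-conjugacy in \(G\) to the single coset \(gN\).

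Next I would transport \(gN\) to \(N\) via the bijection \(gN \to N,\ x \mapsto x \inv{g}\), which is well-defined because \(N\) is normal. Writing \(x_{i} = y_{i} g\) with \(y_{i} \in N\), the relation \(x_{1} = z x_{2} \phi(z)^{-1}\) becomes
\[
    y_{1} = z y_{2} \cdot g \phi(z)^{-1} \inv{g} = z y_{2} \cdot ((\inn{g} \circ \phi)(z))^{-1},
\]
so the bijection carries \(\phi\)-conjugacy on \(gN\) (inherited from \(G\)) to \((\inn{g} \circ \phi)\)-conjugacy on \(N\) (inherited from \(G\)). By definition of \(\ihat_{g}\), the number of classes of the latter relation is exactly \(\size{\im \ihat_{g}}\), and summing over \(\Reid[\bar{\phi}]\) yields the formula. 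The main delicate point is distinguishing conjugators taken from \(N\) versus from all of \(G\); since \(\im \ihat_{g}\) records precisely those \((\inn{g} \circ \phi)\)-classes in \(G\) that meet \(N\), the counts align exactly, and no further hypothesis on \(N\) is required.
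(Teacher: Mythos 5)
Your proof is correct. The paper cites \cite[Lemma~2.1(2)]{KimLeeLee05} for this statement and does not reproduce a proof, so there is nothing internal to compare against; your argument is the natural one and likely coincides with the source. The three key steps all check out: the projection \(p:\Reid[\phi]\to\Reid[\bar\phi]\) is well-defined and surjective, giving the fiber decomposition; every \(\phi\)-class in \(p^{-1}([gN]_{\bar\phi})\) meets the coset \(gN\), so the fiber is in bijection with the set of \(\phi\)-classes of \(G\) restricted to \(gN\); and the right-translation \(x\mapsto x\inv{g}\) converts \(\phi\)-twisted conjugacy on \(gN\) (with conjugators from all of \(G\)) into \((\inn{g}\circ\phi)\)-twisted conjugacy on \(N\) (again with conjugators from all of \(G\)), whose class count is precisely \(\size{\im\ihat_{g}}\). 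You also correctly flag the one subtle point — that \(\im\ihat_{g}\) counts \((\inn{g}\circ\phi)\)-classes of \(G\) meeting \(N\), not \((\inn{g}\circ\phi)|_{N}\)-classes of \(N\) — and use it accurately. One small remark: your argument implicitly establishes that the count \(\size{\im\ihat_{g}}\) does not depend on the chosen representative \(g\) of the class \([gN]_{\bar\phi}\), since it equals the number of \(\phi\)-classes inside the fiber \(p^{-1}([gN]_{\bar\phi})\), which is manifestly representative-independent; it would be worth stating this explicitly, as the lemma's formula silently relies on it.
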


Throughout this section, \(G\) is a fixed group, \(C\) a central subgroup, and \(\phi \in \End(G)\) an endomorphism of \(G\) such that \(\phi(C) \leq C\).

By \cref{lem:GeneralAdditionFormula},
\[
	R(\phi) = \sum_{[gC]_{\bar{\phi}} \in \Reid[\bar{\phi}]} \size{\im \ihat_{g}},
\]
where \(\ihat_{g}: \Reid[\restr{(\inn{g} \circ \phi)}{C}] \to \Reid[\inn{g} \circ \phi]\). Now, since \(C\) is central, \(\inn{g}\) restricted to \(C\) is the identity map for each \(g \in G\). Therefore, the domain of \(\ihat_{g}\) is \(\Reid[\restr{\phi}{C}]\) for each \(g \in G\).

Let \(g \in G\) and \(z_{1}, z_{2} \in C\) be arbitrary. We have the following chain of equivalences:
\begin{align*}
	\ihat_{g}([z_{1}]_{\restr{\phi}{C}}) = \ihat_{g}([z_{2}]_{\restr{\phi}{C}})	&\iff \exists x \in G: z_{1} = xz_{2} g \inv{\phi(x)} \inv{g}	\\
	&\iff \exists x \in G: z_{1} = \inv{g} x g \inv{\phi(x)} z_{2},
\end{align*}
where we conjugate by \(g\) in the last equivalence and simultaneously use that \(C\) is central. Furthermore, if we project the last equality down to \(G / C\), we find \(C = \inv{g}xg\inv{\phi(x)}C\), or, equivalently, \(gC = xg \inv{\phi(x)}C\). Thus, if we write
\[
	\Stab_{\bar{\phi}}(gC) := \{hC \in G / C \mid gC = hg\inv{\phi(h)}C\},
\]
then
\[
\begin{split}
	&\exists x \in G: z_{1} = z_{2} \inv{g} x g \inv{\phi(x)} \\ &\iff \exists x \in \inv{\pi}(\Stab_{\bar{\phi}}(gC)): z_{1} = \inv{g} x g \inv{\phi(x)} z_{2},
\end{split}
\]
where \(\pi: G \to G / C\) is the canonical projection. Consequently, if we define the \emph{\(\phi\)-twisted commutator} \(g, x \in G\) as \([g, x]^\phi := \inv{g} \inv{x} g \phi(x)\), we see that
\[
	\ihat_{g}([z_{1}]_{\restr{\phi}{C}}) = \ihat_{g}([z_{2}]_{\restr{\phi}{C}}) \iff z_{1} = [g, x]^\phi z_{2} \text{ for some } x \in \inv{\pi}(\Stab_{\bar{\phi}}(gC)).
\]

In general, for any endomorphism \(\psi\) of \(G\) and \(g \in G\), the set
\[
	\Stab_{\psi}(g) = \{x \in G \mid xg\inv{\psi(x)} = g\}
\]
is called the \emph{\(\psi\)-twisted stabiliser} of \(g\). It is a subgroup, as it is the stabiliser of \(g\) for the group action
\[
	G \times G \to G: (x, g) \mapsto xg\inv{\psi(x)}.
\]

\begin{lemma}	\label{lem:twistedCommutatorSubgroup}
	For each \(g \in G\), the set
	\[
		[g, \inv{\pi}(\Stab_{\bar{\phi}}(gC))]^\phi := \{[g, x]^\phi \mid x \in \inv{\pi}(\Stab_{\bar{\phi}}(gC))\}
	\]
	is a subgroup of \(C\).
\end{lemma}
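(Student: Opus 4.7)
The plan is to recognise the set in question as the image of a group homomorphism, rather than to verify closure under products and inverses by brute force. Concretely, I would show that the map
\[
f_g: \inv{\pi}(\Stab_{\bar{\phi}}(gC)) \to C, \quad x \mapsto [g, x]^\phi
\]
is a well-defined homomorphism of groups; the set we are interested in is $\im f_g$, which is then automatically a subgroup of $C$.

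First I would verify that the source is indeed a group: $\Stab_{\bar\phi}(gC)$ is the stabiliser of a group action (as noted just before the lemma), hence a subgroup of $G/C$, and $\inv{\pi}(\Stab_{\bar\phi}(gC))$ is its preimage under the projection $\pi: G \to G/C$, hence a subgroup of $G$. Next I would check that the codomain $C$ is correct, i.e.\ that $[g,x]^\phi \in C$ whenever $xC \in \Stab_{\bar\phi}(gC)$. Projecting $[g,x]^\phi = \inv{g}\inv{x}g\phi(x)$ to $G/C$ and using the defining relation $\bar{x}\bar{g}\overline{\phi(x)}^{-1} = \bar{g}$, a one-line rearrangement yields $\bar{g}^{-1}\bar{x}^{-1}\bar{g}\overline{\phi(x)} = \bar{1}$, so $[g,x]^\phi$ lies in $\ker \pi = C$.

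The heart of the argument is the multiplicativity $[g,xy]^\phi = [g,x]^\phi [g,y]^\phi$ for $x, y \in \inv{\pi}(\Stab_{\bar\phi}(gC))$. Starting from
\[
	[g,xy]^\phi = \inv{g}\inv{y}\inv{x}g\phi(x)\phi(y),
\]
I would use the rewriting $\inv{x}g\phi(x) = g \cdot [g,x]^\phi$ to obtain
\[
	[g,xy]^\phi = \inv{g}\inv{y}g \cdot [g,x]^\phi \cdot \phi(y),
\]
and then exploit the centrality of $[g,x]^\phi \in C$ to slide it past $\phi(y)$, giving $[g,xy]^\phi = [g,y]^\phi [g,x]^\phi = [g,x]^\phi [g,y]^\phi$ (the last equality since $C$ is abelian). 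Together with $f_g(1) = [g,1]^\phi = 1$, this makes $f_g$ a homomorphism into $C$, so $\im f_g = [g, \inv\pi(\Stab_{\bar\phi}(gC))]^\phi$ is a subgroup of $C$.

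I do not expect any genuine obstacle here: the whole argument rides on the centrality of $C$ (which lets central elements commute freely through other factors) and the observation that $[g,\cdot]^\phi$ lands in $C$ precisely on the preimage of the twisted stabiliser. The only mild subtlety is the reformulation step $\inv{x}g\phi(x) = g[g,x]^\phi$, which makes the homomorphism property transparent.
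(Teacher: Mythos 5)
Your proposal is correct and the underlying computation — sliding the central element $[g,x]^\phi \in C$ past $\phi(y)$ — is exactly what powers the paper's proof as well. The difference is purely one of packaging: you recognise the set as the image of the map $f_g \colon x \mapsto [g,x]^\phi$ on the subgroup $\inv{\pi}(\Stab_{\bar{\phi}}(gC))$ and verify that $f_g$ is a homomorphism, after which the subgroup property is automatic. The paper instead verifies closure under inverses and under products directly, by hand. Your framing is slightly more economical, since it disposes of the inverse check for free, but the key step (multiplicativity of $[g,\cdot]^\phi$ via centrality of $C$) is the same manipulation the paper performs in its product-closure computation; both implicitly rely on $\inv{\pi}(\Stab_{\bar{\phi}}(gC))$ being a subgroup, which you are right to make explicit. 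No gaps.
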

\begin{proof}
	First, if \(x \in \inv{\pi}(\Stab_{\bar{\phi}}(gC))\), then \(gC = \inv{x}g\phi(x)C\) and thus \(C = \inv{g}\inv{x} g \phi(x)C = [g, x]^\phi C\), which shows that \([g, x]^\phi \in C \leq Z(G)\). It follows that \(h[g, x]^\phi \inv{h} = [g, x]^\phi\) for all \(h \in G\) and \(x \in \inv{\pi}(\Stab_{\bar{\phi}}(gC))\).
	Consequently,
		\begin{align*}
			\invb{[g, x]^\phi}	&= \phi(x)\invb{[g, x]^\phi} \inv{\phi(x)}				\\
								&= \phi(x) \inv{\phi(x)} \inv{g} x g \inv{\phi(x)}	\\
								&= \inv{g} x g \inv{\phi(x)}								\\
								&= \left[g, \inv{x}\right]^\phi \in [g, \inv{\pi}(\Stab_{\bar{\phi}}(gC))]^\phi
		\end{align*}
	
	Next, let \(x_1, x_2 \in \inv{\pi}(\Stab_{\bar{\phi}}(gC))\). Then 
	\begin{align*}
		[g, x_1]^\phi [g, x_2]^\phi	&=	\inv{g} \inv{x}_1 g \phi(x_1) \inv{g} \inv{x}_2g \phi(x_2)	\\
									&= \inv{g} \left(\inv{x}_1 g \phi(x_1) \inv{g}\right) \inv{x}_2 g \phi(x_2) \\
									&= \inv{g} \inv{x}_2 \left(\inv{x}_1 g \phi(x_1) \inv{g}\right) g \phi(x_2) \\
									&= \inv{g} \inv{x}_2 \inv{x}_1 g \phi(x_1) \phi(x_2)		\\
									&= \inv{g} \invb{x_1 x_2} g \phi(x_1 x_2)				\\
									&= [g, x_1 x_2]^\phi \in [g, \inv{\pi}(\Stab_{\bar{\phi}}(gC))]^\phi
	\end{align*}
	since \(\inv{x}_1 g \phi(x_1) \inv{g} = g\left([g, x_1]^\phi\right) \inv{g}\in C \leq Z(G)\). Therefore, the set \([g, \inv{\pi}(\Stab_{\bar{\phi}}(gC))]^\phi\) is a subgroup.
\end{proof}

\begin{prop}
	For each \(g \in G\), \(\size{\im \ihat_{g}}\) equals the index of \([g, \inv{\pi}(\Stab_{\bar{\phi}}(gC))]^\phi\) in \(C\).
\end{prop}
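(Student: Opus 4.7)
The plan is to recognise that all of the hard work has already been done in the paragraph preceding the statement, together with \cref{lem:twistedCommutatorSubgroup}. What remains is a counting argument.

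First, I would observe that since \(C\) is central, the composition
\[
	C \twoheadrightarrow \Reid[\restr{\phi}{C}] \xrightarrow{\ihat_{g}} \Reid[\inn{g} \circ \phi], \quad z \mapsto \ihat_{g}([z]_{\restr{\phi}{C}}),
\]
is a well-defined surjection of \(C\) onto \(\im \ihat_{g}\) (surjectivity follows from the fact that the left-hand map \(z \mapsto [z]_{\restr{\phi}{C}}\) is surjective by definition). Hence \(\size{\im \ihat_{g}}\) equals the number of fibres of this composition, i.e.\ the number of equivalence classes on \(C\) under the relation \(z_{1} \sim z_{2} \iff \ihat_{g}([z_{1}]_{\restr{\phi}{C}}) = \ihat_{g}([z_{2}]_{\restr{\phi}{C}})\).

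Next, I would invoke the chain of equivalences already derived in the paragraph before \cref{lem:twistedCommutatorSubgroup}, which says precisely that
\[
	z_{1} \sim z_{2} \iff z_{1} = [g, x]^\phi z_{2} \text{ for some } x \in \inv{\pi}(\Stab_{\bar{\phi}}(gC)).
\]
Since \(C\) is central (and hence abelian) and \([g, x]^\phi \in C\) for any such \(x\), this rewrites as
\[
	z_{1} \sim z_{2} \iff z_{1} \inv{z}_{2} \in [g, \inv{\pi}(\Stab_{\bar{\phi}}(gC))]^\phi.
\]
By \cref{lem:twistedCommutatorSubgroup}, the right-hand side is a subgroup of \(C\), so the equivalence classes on \(C\) are precisely the cosets of \([g, \inv{\pi}(\Stab_{\bar{\phi}}(gC))]^\phi\) in \(C\). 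Therefore
\[
	\size{\im \ihat_{g}} = \left[C : [g, \inv{\pi}(\Stab_{\bar{\phi}}(gC))]^\phi\right],
\]
as claimed.

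There is no real obstacle here; the only mild subtlety is verifying that the displayed map from \(C\) really factors through \(\Reid[\restr{\phi}{C}]\), which amounts to noting that if \(x \in C\) then \(x \in \inv{\pi}(\Stab_{\bar{\phi}}(gC))\) and \([g, x]^\phi = \inv{x}\phi(x)\) (using centrality), so \(\restr{\phi}{C}\)-conjugacy already implies \(\sim\). Consequently the fibres of the composition really are the \(\sim\)-classes, closing the argument.
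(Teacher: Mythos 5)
Your proof is correct and takes essentially the same route as the paper: both reduce to the already-established equivalence that $\ihat_{g}([z_{1}]_{\restr{\phi}{C}}) = \ihat_{g}([z_{2}]_{\restr{\phi}{C}})$ iff $z_{1}\inv{z_{2}} \in [g, \inv{\pi}(\Stab_{\bar{\phi}}(gC))]^{\phi}$, and then read off the count as the index of this subgroup in $C$. The paper packages this by exhibiting an injection from $C/[g, \inv{\pi}(\Stab_{\bar{\phi}}(gC))]^{\phi}$ with image $\im \ihat_{g}$, whereas you count fibres of the surjection $C \to \im\ihat_{g}$; these are two presentations of the same argument.
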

\begin{proof}
We have argued earlier that, for \(g \in G\) and \(z_{1}, z_{2} \in C\), \(\ihat_{g}([z_{1}]_{\restr{\phi}{C}}) = \ihat_{g}([z_{2}]_{\restr{\phi}{C}})\) if and only if \(z_{1} = x z_{2}\) for some \(x \in [g, \inv{\pi}(\Stab_{\bar{\phi}}(gC))]^{\phi}\). Thus, the map
\[
	C / [g, \inv{\pi}(\Stab_{\bar{\phi}}(gC))]^{\phi} \to \Reid[\inn{g} \circ \phi]: z[g, \inv{\pi}(\Stab_{\bar{\phi}}(gC))]^{\phi} \to \ihat_{g}([z]_{\restr{\phi}{C}})
\]
is injective and its image is equal to \(\im \ihat_{g}\). This yields the result.
\end{proof}

Combining this with \cref{lem:GeneralAdditionFormula}, we immediately get the following:

\begin{theorem}	\label{theo:sumFormulaReidemeisterNumberCentralExtension}
	With the notation as above,
	\[
		R(\phi) = \sum_{[gC]_{\bar{\phi}} \in \Reid[\bar{\phi}]} \Big[C : [g, \inv{\pi}(\Stab_{\bar{\phi}}(gC))]^{\phi} \Big]
	\]
\end{theorem}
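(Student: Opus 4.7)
My plan is to obtain the formula by directly combining the two results established immediately before the statement. Applying Lemma \ref{lem:GeneralAdditionFormula} with the normal subgroup \(N = C\), and invoking the observation made just above that \(\inn{g}\) restricts to the identity on the central subgroup \(C\) (so that the domain of each \(\ihat_{g}\) simplifies to \(\Reid[\restr{\phi}{C}]\)), we immediately get
\[
	R(\phi) = \sum_{[gC]_{\bar{\phi}} \in \Reid[\bar{\phi}]} \size{\im \ihat_{g}}.
\]
Then the proposition just proved identifies each \(\size{\im \ihat_{g}}\) with the index \([C : [g, \inv{\pi}(\Stab_{\bar{\phi}}(gC))]^{\phi}]\), and direct substitution yields the claim.

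The only conceivable worry is that the summand in the stated formula must depend only on the class \([gC]_{\bar{\phi}}\) and not on the chosen representative \(g\); this is automatic from the identification with \(\size{\im \ihat_{g}}\), whose representative-invariance is built into the formulation of Lemma \ref{lem:GeneralAdditionFormula}. The substantive arguments — that \([g, \inv{\pi}(\Stab_{\bar{\phi}}(gC))]^{\phi}\) is a subgroup of \(C\) (Lemma \ref{lem:twistedCommutatorSubgroup}) and that this subgroup controls the image of \(\ihat_{g}\) via the bijection \(C / [g, \inv{\pi}(\Stab_{\bar{\phi}}(gC))]^{\phi} \to \im \ihat_{g}\) — have already been carried out, so there is no real remaining obstacle: the theorem follows immediately.
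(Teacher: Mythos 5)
Your proposal is correct and is exactly the argument the paper uses: it too obtains the theorem by substituting the identification \(\size{\im \ihat_{g}} = [C : [g, \inv{\pi}(\Stab_{\bar{\phi}}(gC))]^{\phi}]\) from the preceding proposition into the sum formula of Lemma~\ref{lem:GeneralAdditionFormula}.
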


For central extensions, \cref{lem:GeneralAdditionFormula} thus involves computing indices of subgroups rather than sizes of mere subsets. In this form, however, it is rather unclear how to derive the inequality \(R(\phi) \leq R(\restr{\phi}{C}) R(\bar{\phi})\). We address this issue by investigating the expression 
\[
	\Big[C : [g, \inv{\pi}(\Stab_{\bar{\phi}}(gC))]^{\phi} \Big].
\]

We start by noting that \(C\) is a central subgroup, so it is abelian. In abelian groups, the set of Reidemeister classes carries a natural group structure \cite[\S 2]{Jiang83}, \cite[Proposition~1.3]{Heath85}, \cite[\S 3]{Romankov11}:
\begin{prop}	\label{prop:GroupStructureReidemeisterClassesAbelianGroups}
Let \(A\) be an abelian group and \(\psi \in \End(A)\). Then the set of Reidemeister classes is given by the cosets of \(\im(\psi - \Id_{A})\), where \([a]_{\psi} = a + \im(\psi - \Id_{A})\). 
\end{prop}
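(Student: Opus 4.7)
The plan is to simply unpack the definition of $\psi$-conjugacy in the additive setting and observe that it reduces to translation by an element of $\im(\psi - \Id_{A})$. Concretely, for $a, b \in A$, the relation $a \sim_{\psi} b$ means there exists $z \in A$ with $a = z + b - \psi(z)$. Since $A$ is abelian, I would rewrite this as
\[
    a - b = z - \psi(z) = -(\psi - \Id_{A})(z),
\]
so that $a \sim_{\psi} b$ if and only if $a - b \in \im(\psi - \Id_{A})$ (noting that $\im(\psi - \Id_{A}) = \im(\Id_{A} - \psi)$, as these subgroups differ only by the sign of the generators).

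From this, the equivalence class of $a$ under $\sim_{\psi}$ is exactly the coset $a + \im(\psi - \Id_{A})$, which gives the second claim. For the first claim, I would use that $\im(\psi - \Id_{A})$ is a subgroup of $A$ (this uses abelianness of $A$, since then $\psi - \Id_{A}$ is a group homomorphism and its image is automatically a subgroup), so the equivalence classes partition $A$ into the cosets of this subgroup and inherit the quotient group structure of $A / \im(\psi - \Id_{A})$.

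There is no real obstacle here: the proposition is essentially a direct translation of the definition of twisted conjugacy into additive notation, combined with the observation that in the abelian case the "conjugation" action degenerates to translation by $(\Id_{A} - \psi)(z)$. The only point worth stressing is that abelianness is used twice, once to move $z$ past $b$ and once to guarantee that $\im(\psi - \Id_{A})$ is a subgroup so that the quotient actually makes sense as a group.
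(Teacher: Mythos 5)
Your proof is correct. Note that the paper itself does not prove this proposition but instead cites it from several sources (Jiang, Heath, Romankov); your argument is the standard one that those references give, and it handles the two places where abelianness is used (commuting $z$ past $b$, and $\psi - \Id_{A}$ being a homomorphism so its image is a subgroup) correctly.
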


In our case, the set \(\Reid[\restr{\phi}{C}]\) thus consists of the cosets of \(I := \{c \inv{\phi(c)} \mid c \in C\}\), \ie \(\Reid[\restr{\phi}{C}] = C / I\). Moreover, for each \(g \in G\), the group \(I\) is a subgroup of \([g, \inv{\pi}(\Stab_{\bar{\phi}}(gC))]^\phi\), as \(c \inv{\phi(c)} = [g, c]^{\phi}\) for all \(c \in C, g \in G\). This implies that
\[
	\Big[C : [g, \inv{\pi}(\Stab_{\bar{\phi}}(gC))]^{\phi} \Big] = \left[ \frac{C}{I} : \frac{[g, \inv{\pi}(\Stab_{\bar{\phi}}(gC))]^{\phi}}{I} \right].
\]
Since \(R(\restr{\phi}{C}) = [C :  I]\), \cref{theo:sumFormulaReidemeisterNumberCentralExtension} then yields the desired inequality:
\begin{align*}
	R(\phi)	&= \sum_{[gC]_{\bar{\phi}} \in \Reid[\bar{\phi}]} \Big[C : [g, \inv{\pi}(\Stab_{\bar{\phi}}(gC))]^{\phi} \Big]	\\
			&\leq \sum_{[gC]_{\bar{\phi}} \in \Reid[\bar{\phi}]} R(\restr{\phi}{C})	\\
			&= R(\restr{\phi}{C}) R(\bar{\phi}).
\end{align*}

\begin{example}
	We apply \cref{theo:sumFormulaReidemeisterNumberCentralExtension} to the example on \(G := C_{4} = \grpgen{x}\) we gave earlier. Recall that we took \(C\) equal to the subgroup generated by \(x^{2}\) and \(\phi\) equal to the inversion map. Since \(\bar{\phi}\) is the identity (the only automorphism on the cyclic group of order \(2\)), \(\Stab_{\bar{\phi}}(gC) = G / C\) for all \(g \in G\). Next, for \(g \in G\),
	\begin{align*}
		[g, \inv{\pi}(\Stab_{\bar{\phi}}(gC))]^{\phi}	&= [g, G]^{\phi}	\\
										&= \{\inv{g}\invb{x^{i}} g \phi(x^{i}) \mid i \in \Z\} \\
										&= \{x^{-i} x^{-i} \mid i \in \Z\} \\
										&= \{1, x^{2}\} \\
										&= C,
	\end{align*}
	as \(G\) is abelian. Consequently, \cref{theo:sumFormulaReidemeisterNumberCentralExtension} yields
	\[
		R(\phi) = \sum_{[gC]_{\bar{\phi}} \in \Reid[\bar{\phi}]} [C : C] = R(\bar{\phi}) = 2.
	\]
\end{example}
\subsection{Product formula for central extensions of residually finite groups}

Theoretically, the formula in \cref{theo:sumFormulaReidemeisterNumberCentralExtension} can be used to compute Reidemeister numbers, but doing so is generally work-intensive. However, for a large family of groups, this formula simplifies to \(R(\phi) = R(\restr{\phi}{C}) R(\bar{\phi})\), the equality-part from \cref{lem:InequalityCentralExtensions}.

\begin{defin}
	A group \(G\) is of \emph{type (F)} if, for all positive integers \(n\), \(G\) only has finitely many subgroups of index \(n\).
\end{defin}
\begin{prop}	\label{prop:productFormulaCentralExtension}
	Let \(G\) be a group and \(C\) a central subgroup such that \(G / C\) is \tf{} residually finite and of type \((F)\). Let \(\phi \in \End(G)\) be such that \(\phi(C) \leq C\). Let \(\restr{\phi}{C}\) and \(\bar{\phi}\) denote the induced endomorphisms on \(C\) and \(G / C\), respectively. Then
	\[
		R(\phi) = R(\restr{\phi}{C})R(\bar{\phi}).
	\]
\end{prop}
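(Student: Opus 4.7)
My plan is to start from the sum formula
\[
R(\phi) = \sum_{[gC]_{\bar{\phi}} \in \Reid[\bar{\phi}]} \Big[C : [g, \inv{\pi}(\Stab_{\bar{\phi}}(gC))]^{\phi}\Big]
\]
of \cref{theo:sumFormulaReidemeisterNumberCentralExtension} and to exploit the already established inclusion $I \leq [g, \inv{\pi}(\Stab_{\bar{\phi}}(gC))]^{\phi}$, where $I = \{c\inv{\phi(c)} \mid c \in C\}$ satisfies $R(\restr{\phi}{C}) = [C : I]$. Since the inequality $R(\phi) \leq R(\restr{\phi}{C}) R(\bar{\phi})$ is already derived in the paragraph preceding the statement, only the reverse inequality remains. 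With $R(\bar{\phi})$ summands, each at most $[C : I]$, equality reduces to showing that $[g, \inv{\pi}(\Stab_{\bar{\phi}}(gC))]^{\phi} = I$ for every $g$ indexing a class of $\bar{\phi}$.

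The argument splits on whether $R(\bar{\phi})$ is finite. If $R(\bar{\phi}) = \infty$, the sum already has infinitely many summands of size at least $1$, so $R(\phi) = \infty$; since $R(\restr{\phi}{C}) \geq 1$, the right-hand side is $\infty$ as well. Suppose instead $R(\bar{\phi}) < \infty$. The standard identity $R(\inn{\bar{g}} \circ \bar{\phi}) = R(\bar{\phi})$ (Reidemeister classes of $\inn{\bar{g}} \circ \bar{\phi}$ correspond bijectively to those of $\bar{\phi}$ via right translation by $\bar{g}$) gives $R(\inn{\bar{g}} \circ \bar{\phi}) < \infty$ as well. Applying \cref{cor:twistedStabiliserFGTFResiduallyFinite} to the endomorphism $\inn{\bar{g}} \circ \bar{\phi}$ of the torsion-free residually finite type $(F)$ group $G/C$ yields $\Stab_{\bar{\phi}}(gC) = \Fix(\inn{\bar{g}} \circ \bar{\phi}) = 1$, whence $\inv{\pi}(\Stab_{\bar{\phi}}(gC)) = C$. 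Centrality then gives $[g, c]^{\phi} = \inv{c}\phi(c)$ for every $c \in C$, so $[g, C]^{\phi} = \{\inv{c}\phi(c) \mid c \in C\}$, which equals $I$ after the substitution $d = \inv{c}$ (and using that $I$ is a subgroup). Each summand is therefore $[C : I] = R(\restr{\phi}{C})$, and summing over the $R(\bar{\phi})$ classes delivers the product formula.

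The genuine obstacle is \cref{cor:twistedStabiliserFGTFResiduallyFinite}: triviality of the twisted stabiliser is exactly where the torsion-free, residually finite, and type $(F)$ assumptions on $G/C$ do the real work. Heuristically, a nontrivial fixed element of $\inn{\bar{g}} \circ \bar{\phi}$ would, by torsion-freeness, generate an infinite cyclic subgroup of fixed points, and one expects that passing through appropriate finite characteristic quotients (available thanks to residual finiteness and type $(F)$) produces infinitely many twisted conjugacy classes for $\bar{\phi}$, contradicting $R(\bar{\phi}) < \infty$. Granted this technical input, the rest of the proof is bookkeeping on the sum formula together with the centrality of $C$.
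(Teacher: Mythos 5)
Your proposal is correct and follows essentially the same route as the paper: split on finiteness of $R(\bar{\phi})$, invoke \cref{cor:twistedStabiliserFGTFResiduallyFinite} to trivialize the twisted stabilisers when $R(\bar{\phi}) < \infty$, and then observe that each summand in \cref{theo:sumFormulaReidemeisterNumberCentralExtension} collapses to $[C:I] = R(\restr{\phi}{C})$. The only cosmetic differences are that the paper handles the infinite case via \cref{lem:LowerBoundReidemeisterNumberByQuotient} rather than by counting summands, and applies the corollary directly to $\bar{\phi}$ rather than detouring through $\inn{\bar{g}} \circ \bar{\phi}$.
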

Finitely generated groups are of type \((F)\) \cite{Hall49}.
Topologically finitely generated profinite groups are of type \((F)\) as well, by combining \cite[Theorem~2]{Anderson76} and \cite[Theorem~1.1]{NikolovSegal03}.
Note that these include groups that are not finitely generated in the algebraic sense, such as the \(p\)-adic integers.
In addition, profinite groups are also residually finite, and the quotient of a profinite group by a closed subgroups is also profinite. 
Hence, \cref{prop:productFormulaCentralExtension} can be applied to topologically finitely generated profinite groups.

The result hinges on the following:

\begin{prop}	\label{prop:infiniteTwistedStabilisersImpliesInfiniteReidemeisterNumber}
	Let \(G\) be a residually finite group of type \((F)\). Let \(\phi \in \End(G)\). If \(\Stab_{\phi}(g)\) is infinite for some \(g \in G\), then \(R(\phi) = \infty\).
\end{prop}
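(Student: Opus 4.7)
The plan is to recast the claim as a statement about fixed-point subgroups. A direct calculation shows \(\Stab_\phi(g) = \Fix(\inn{g} \circ \phi)\), since \(xg\phi(x)^{-1} = g\) is equivalent to \(\inn{g}(\phi(x)) = x\). Because \(R(\phi) = R(\inn{g} \circ \phi)\) -- a standard fact, as conjugation by \(g\) induces a bijection on Reidemeister classes -- it suffices to prove the contrapositive: if \(\psi \in \End(G)\) and \(R(\psi) < \infty\), then \(\Fix(\psi)\) is finite.

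Assume \(R(\psi) < \infty\). Using type \((F)\), set \(L_n := \bigcap\{H \le G : [G:H] \le n\}\), a finite intersection and hence a characteristic subgroup of finite index. For any \(H\) with \([G:H] \le n\), the map \(x\psi^{-1}(H) \mapsto \psi(x)H\) is an injection \(G/\psi^{-1}(H) \hookrightarrow G/H\), so \([G : \psi^{-1}(H)] \le n\) and hence \(\psi^{-1}(H) \supseteq L_n\); intersecting shows \(\psi(L_n) \leq L_n\). Residual finiteness gives \(\bigcap_n L_n = 1\). Writing \(H_n = G/L_n\), \(\psi_n \in \End(H_n)\) for the induced map, and \(\pi_n : G \to H_n\) for the projection, the natural surjection \(\Reid[\psi] \twoheadrightarrow \Reid[\psi_n]\) yields \(R(\psi_n) \le R(\psi)\); also \(\pi_n(\Fix(\psi)) \leq \Fix(\psi_n)\).

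The main obstacle is to leverage the uniform bound \(R(\psi_n) \le R(\psi) < \infty\) to force \(|\pi_n(\Fix(\psi))|\) to be uniformly bounded; since \(\bigcap_n L_n = 1\) separates finite subsets of \(\Fix(\psi)\) in some \(H_n\), this would imply that \(\Fix(\psi)\) is finite. The natural strategy is to descend further to abelian quotients: applied to the finite abelian group \(H_n^{ab}\), \cref{prop:ReidemeisterNumbersFreeAbelianGroups} gives \(R(\psi_n^{ab}) = |\Fix(\psi_n^{ab})|\), and \(R(\psi_n) \ge R(\psi_n^{ab})\). If \(\Fix(\psi)\) has infinite image in \(G^{ab}\), then \(|\Fix(\psi_n^{ab})|\) is unbounded, contradicting \(R(\psi_n) \le R(\psi)\). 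The remaining case \(\Fix(\psi) \subseteq [G,G]\) seems to demand iterating along the \(\psi\)-invariant lower central series and applying \cref{theo:productFormulaFGTFNilpotentGroups} on the nilpotent quotients \(G/\gamma_i(G)\); closing this iteration without any assumption of residual nilpotency on \(G\) is the principal technical difficulty I anticipate.
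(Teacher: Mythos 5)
Your reduction matches the paper's exactly: pass to \(\inn{g}\circ\phi\) via \cref{lem:PropertiesTwistedConjugacyCompositionWithInnerAutomorphism}, so it suffices to show that \(R(\psi) < \infty\) forces \(\Fix(\psi)\) to be finite; use type \((F)\) to produce fully characteristic finite-index subgroups \(L_n\) (your \(L_n\) is precisely the subgroup constructed in \cref{lem:FullyCharacteristicFiniteIndexSubgroup}) that are \(\psi\)-invariant and, by residual finiteness, intersect to the identity; for the induced \(\psi_n\) on the finite quotient \(G/L_n\), you get \(R(\psi_n)\le R(\psi)\) and any prescribed finite subset of \(\Fix(\psi)\) maps injectively into \(\Fix(\psi_n)\) once \(n\) is large enough. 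Up to this point you have reconstructed the paper's proof.

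The missing ingredient --- which you correctly single out as the obstacle --- is a bound on \(\size{\Fix(\psi_n)}\) purely in terms of \(R(\psi_n)\) for a \emph{finite} group. The paper supplies exactly this in \cref{lem:SizeFixpointsBoundedByReidemeisterNumber} (Jabara's lemma, extended from automorphisms to endomorphisms): for any finite group \(H\) and any \(\psi\in\End(H)\), one has \(\size{\Fix(\psi)}\le 2^{2^{R(\psi)}}\). With this in hand the argument closes in one step: if \(\Fix(\psi)\) were infinite, then for every \(n\) some finite quotient would have at least \(n\) fixed points, forcing \(R(\psi)\ge R(\psi_n)\ge\log_2\log_2 n\), so \(R(\psi)=\infty\). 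Your abelianization detour is a genuine gap, not a loose end: it controls only the part of \(\Fix(\psi)\) visible in \(G^{\mathrm{ab}}\), and the fallback you describe --- descending the lower central series of the quotients \(G/L_n\) --- cannot terminate in general, since a finite group need not be nilpotent (its lower central series stabilises at a possibly nontrivial perfect subgroup, where abelianisation gives nothing). Jabara's lemma is precisely the nontrivial finite-group input that bypasses this structure problem, and without it or some substitute of comparable strength the contrapositive does not close.
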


As far as we know, E.\ Jabara is the first to use this result \cite{Jabara08}, albeit for automorphisms on finitely generated residually finite groups and without explicitly stating it in this form.
In \cite[Proposition~3.7]{Senden21}, we explicitly prove the result Jabara uses.
A similar proof also works for the version in \cref{prop:infiniteTwistedStabilisersImpliesInfiniteReidemeisterNumber}, and for the reader's convenience, we will present it here.

The key ingredient of the proof is an upper bound for Reidemeister numbers on finite groups. 
Jabara proves this for automorphisms \cite[Lemma~4]{Jabara08}, but his proof also works for endomorphisms:
\begin{lemma}	\label{lem:SizeFixpointsBoundedByReidemeisterNumber}
	Let \(G\) be a finite group and \(\phi \in \End(G)\). Then \(\size{\Fix(\phi)} \leq 2^{2^{R(\phi)}}\)
\end{lemma}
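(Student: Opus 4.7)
The plan is to reduce the desired bound to a statement about conjugacy classes of \(\Fix(\phi)\). Since \(\phi\) fixes every element of \(\Fix(\phi)\), the restriction \(\restr{\phi}{\Fix(\phi)}\) is the identity map, so its Reidemeister classes coincide with the ordinary conjugacy classes of \(\Fix(\phi)\). Writing \(k(H)\) for the class number of a finite group \(H\), this identifies \(R(\restr{\phi}{\Fix(\phi)})\) with \(k(\Fix(\phi))\).

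The proof then splits into two ingredients. Firstly, I would try to establish the inequality \(k(\Fix(\phi)) \leq R(\phi)\) by exhibiting an injection from the conjugacy classes of \(\Fix(\phi)\) into the Reidemeister classes of \(\phi\). The natural candidate sends the \(\Fix(\phi)\)-class of \(g\) to \([g]_{\phi}\); well-definedness is immediate since, for \(z \in \Fix(\phi)\), one has \(zg\inv{\phi(z)} = zg\inv{z}\). Secondly, I would invoke the classical double-exponential bound \(\size{H} \leq 2^{2^{k(H)}}\) valid for every finite group \(H\); this follows from an induction on \(\size{H}\) using the class equation, together with the observation that some centralizer \(C_{H}(x)\) is suitably large relative to \(k(H)\). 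Chaining the two would then yield \(\size{\Fix(\phi)} \leq 2^{2^{k(\Fix(\phi))}} \leq 2^{2^{R(\phi)}}\), as required.

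The main obstacle is the injectivity claim: given \(g, h \in \Fix(\phi)\) and \(z \in G\) with \(h = zg\inv{\phi(z)}\), one must produce some \(z' \in \Fix(\phi)\) with \(h = z'g\inv{z'}\). Applying \(\phi\) to the identity and using \(\phi(g) = g\), \(\phi(h) = h\), we obtain a second expression \(h = \phi(z)g\inv{\phi^{2}(z)}\); comparing the two forces \(w := \inv{\phi(z)}z\) to satisfy \((\inn{g} \circ \phi)(w) = w\). The plan would be to exploit finiteness of \(G\) --- for instance, by passing to the eventual image \(\bigcap_{n} \phi^{n}(G)\), on which \(\phi\) acts as an automorphism of finite order --- and then to construct \(z'\) as a suitable norm-style product of the iterates \(\phi^{i}(z)\), thereby landing inside \(\Fix(\phi)\). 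Should strict injectivity fail in this form, a fallback is to bound the fibres of the candidate map directly and absorb the extra factor into the outer double-exponential, which is extremely generous.
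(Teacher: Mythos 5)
Your reduction to the class number of \(\Fix(\phi)\) has a genuine gap: the map sending the \(\Fix(\phi)\)-conjugacy class of \(g\) to \([g]_{\phi}\) is well-defined but \emph{not} injective, and the paper's own running counterexample already witnesses this. Take \(G = C_{4} = \grpgen{x}\) with \(\phi\) the inversion map, so \(\Fix(\phi) = \{1, x^{2}\}\). Then \([1]_{\phi} = \{z\inv{\phi(z)} \mid z \in G\} = \{z^{2} \mid z \in G\} = \{1, x^{2}\}\), so \(1\) and \(x^{2}\) are \(\phi\)-conjugate even though they lie in distinct conjugacy classes of the abelian group \(\Fix(\phi)\). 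Thus two distinct \(\Fix(\phi)\)-classes collapse onto one Reidemeister class. Neither of your rescues repairs this. The candidate norm \(z\phi(z)\cdots\phi^{n-1}(z)\) need not lie in \(\Fix(\phi)\): applying \(\phi\) cyclically permutes the factors, which do not commute in general, and it is only defined when \(\phi\) restricts to an automorphism of finite order. And the fibre-bounding fallback is circular: the fibre over \([1]_{\phi}\) can contain as many \(\Fix(\phi)\)-classes as \(\Fix(\phi)\) has, so bounding the fibres is essentially the same task as bounding \(\size{\Fix(\phi)}\), the quantity you set out to control.

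The repair is to apply the class-equation argument you already invoke for the bound \(\size{H} \leq 2^{2^{k(H)}}\), but directly to the twisted action rather than detouring through \(\Fix(\phi)\). The left action \(G \times G \to G\), \((z, g) \mapsto zg\inv{\phi(z)}\), has orbits the Reidemeister classes and stabilisers \(\Stab_{\phi}(g)\), with \(\Stab_{\phi}(1) = \Fix(\phi)\). Orbit--stabiliser and finiteness of \(G\) give
\[
	1 = \sum_{[g]_{\phi} \in \Reid[\phi]} \frac{1}{\size{\Stab_{\phi}(g)}},
\]
a decomposition of \(1\) into \(R(\phi)\) unit fractions, one of which is \(1/\size{\Fix(\phi)}\). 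The same Landau/Sylvester Egyptian-fraction bound that underlies your \(\size{H} \leq 2^{2^{k(H)}}\) now bounds every denominator, hence \(\size{\Fix(\phi)} \leq 2^{2^{R(\phi)}}\). This bypasses \(k(\Fix(\phi))\) entirely and, up to replacing ordinary conjugacy by twisted conjugacy (and automorphism by endomorphism, which costs nothing here since the action formula above requires neither injectivity nor surjectivity of \(\phi\)), is the argument of Jabara that the paper references. In short: you had the right second ingredient, applied to the wrong group action.
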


To make the proof in \cite[Proposition~3.7]{Senden21} work for endomorphisms, we need several lemmata. The first one is a slight generalisation of one by T.\ Hsu and D.\ Wise \cite[Lemma~2.2]{HsuWise03}.
\begin{lemma}	\label{lem:FullyCharacteristicFiniteIndexSubgroup}
	Let \(G\) be a group of type \((F)\) and \(H\) a proper subgroup of finite index. Then there exists a fully characteristic subgroup \(K\) of finite index such that \(K\) is contained in \(H\).
\end{lemma}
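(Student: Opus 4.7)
The plan is to adapt the standard trick of intersecting all subgroups of bounded index, and to check that when the ambient group is of type $(F)$, this intersection is both of finite index and invariant under every endomorphism.

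Concretely, let $n = [G : H]$ and define
\[
    K = \bigcap_{\substack{L \leq G \\ [G : L] \leq n}} L.
\]
Since $G$ is of type $(F)$, for each $m \leq n$ there are only finitely many subgroups of $G$ of index $m$, so the intersection defining $K$ is over finitely many subgroups. Standard arguments then yield $[G : K] < \infty$, and $K \leq H$ because $H$ itself is one of the subgroups appearing in the intersection.

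The main (and only non-routine) point is to verify that $K$ is fully characteristic. For this, the key observation is that if $\phi \in \End(G)$ and $L \leq G$ has index $m$, then the assignment $g\phi^{-1}(L) \mapsto \phi(g)L$ is a well-defined injection from the left cosets of $\phi^{-1}(L)$ into those of $L$, giving $[G : \phi^{-1}(L)] \leq m$. Hence whenever $L$ belongs to the collection being intersected, so does $\phi^{-1}(L)$, and therefore $K \leq \phi^{-1}(L)$. Taking the intersection over all such $L$,
\[
    K \leq \bigcap_{L} \phi^{-1}(L) = \phi^{-1}(K),
\]
which is exactly $\phi(K) \leq K$.

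I expect the main (modest) obstacle to be nothing more than spelling out the index bound for $\phi^{-1}(L)$ carefully, since $L$ need not be normal and $\phi$ need not be surjective; everything else is a direct consequence of the type $(F)$ hypothesis and the definition of fully characteristic subgroup.
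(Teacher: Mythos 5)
Your proposal is correct and follows the same overall strategy as the paper: define $K$ as the intersection of all subgroups of index at most $[G:H]$, use type $(F)$ to see this is a finite intersection (hence $K$ has finite index and lies in $H$), and show $K$ is fully characteristic by proving $[G : \phi^{-1}(L)] \leq [G:L]$ for every $\phi \in \End(G)$ and every $L$ in the collection. The one place where you diverge is in how you justify this index inequality: you construct the explicit injection $g\phi^{-1}(L) \mapsto \phi(g)L$ between coset spaces, which is a clean, direct argument that needs nothing beyond the definition of cosets. The paper instead passes through the first isomorphism theorem, writing $[G : \phi^{-1}(L)] = [G/\ker\phi : \phi^{-1}(L)/\ker\phi] = [\im\phi : \im\phi \cap L] \leq [G:L]$. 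Both are correct; your version is arguably the more elementary of the two, while the paper's is perhaps more in line with how such index computations are usually packaged. Nothing is missing from your argument.
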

\begin{proof}
	Define \(X := \{L \leq G \mid [G : L] \leq [G : H]\}\) and
	\[
		K := \bigcap_{L \in X} L.
	\]
	Since \([G : H]\) is finite and \(G\) is of type \((F)\), \(X\) is a finite set.
	Therefore, \(K\) has finite index, as an intersection of finitely many subgroups of finite index.
	Also, \(H \in X\), so \(K \leq H\).
	We are only left to argue that \(K\) is fully characteristic.
	Let \(\phi \in \End(G)\) and let \(L \in X\).
	Note that \(\ker \phi \leq \inv{\phi}(L)\).
	Consequently,
	\begin{align*}
		[G : \inv{\phi}(L)]	&= \left[ \frac{G}{\ker \phi} : \frac{\inv{\phi}(L)}{\ker \phi}\right]	\\
						&= \left [\frac{G}{\ker \phi} : \frac{\inv{\phi}(\im \phi \cap L)}{\ker \phi} \right]	\\
						&= [\im \phi : \im \phi \cap L]	\\
						&\leq [G : L].
	\end{align*}
	Therefore, \(\inv{\phi}(L) \in X\).
	It follows that
	\[
		\inv{\phi}(K) = \bigcap_{L \in X} \inv{\phi}(L) \geq \bigcap_{L \in X} L = K,
	\]
	which implies that \(\phi(K) \leq \phi(\inv{\phi}(K)) \leq K\).
	Therefore, \(K\) is fully characteristic, since \(\phi\) is arbitrary.
\end{proof}

The next two lemmata are well-known results in twisted conjugacy.
\begin{lemma}[{\cite[Lemma~1.1(1)]{GoncalvesWong09}}]	\label{lem:LowerBoundReidemeisterNumberByQuotient}
	Let \(G\) be a group, \(N\) a normal subgroup, and \(\phi \in \End(G)\) such that \(\phi(N) \leq N\). Let \(\bar{\phi}\) denote the induced endomorphism on \(G / N\). Then \(R(\phi) \geq R(\bar{\phi})\).
\end{lemma}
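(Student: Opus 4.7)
The plan is to construct a surjection from $\Reid[\phi]$ onto $\Reid[\bar{\phi}]$, which will immediately give $R(\phi) = \size{\Reid[\phi]} \geq \size{\Reid[\bar{\phi}]} = R(\bar{\phi})$. The natural candidate is the map $\hat{\pi}$ induced by the canonical projection $\pi : G \to G/N$, sending the class $[g]_{\phi}$ to the class $[gN]_{\bar{\phi}}$.

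The verification has two parts. First, well-definedness: starting from a $\phi$-conjugacy relation $g_{1} = z g_{2} \inv{\phi(z)}$, I would apply $\pi$ to both sides and use the intertwining identity $\pi \circ \phi = \bar{\phi} \circ \pi$ (which holds precisely because $\phi(N) \leq N$) to read off the corresponding $\bar{\phi}$-conjugacy relation between $g_{1}N$ and $g_{2}N$ in $G/N$. Second, surjectivity: any $\bar{\phi}$-conjugacy class is represented by some $hN$ with $h \in G$, and the class $[h]_{\phi}$ is then a preimage under $\hat{\pi}$.

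I do not expect any real obstacle: the statement is essentially the functoriality of twisted conjugacy classes under $\phi$-equivariant quotient maps, and the hypothesis $\phi(N) \leq N$ is exactly what makes $\pi$ equivariant with respect to $\phi$ and $\bar{\phi}$, hence induces a (surjective) map on classes.
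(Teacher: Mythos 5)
Your proof is correct: the projection $\pi: G \to G/N$ is $\phi$-equivariant (precisely because $\phi(N) \leq N$), so it induces a well-defined surjection $\hat{\pi}: \Reid[\phi] \to \Reid[\bar{\phi}]$, $[g]_{\phi} \mapsto [gN]_{\bar{\phi}}$, whence $R(\phi) \geq R(\bar{\phi})$. The paper does not reprove this lemma but simply cites it from Gon\c{c}alves--Wong, and the argument you give is the standard one found there, so there is no discrepancy to report.
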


\begin{lemma}	\label{lem:PropertiesTwistedConjugacyCompositionWithInnerAutomorphism}
	Let \(G\) be a group, \(\phi \in \End(G)\), and \(g \in G\). Then
	\begin{enumerate}[(1)]
		\item \(R(\phi) = R(\inn{g} \circ \phi)\),
		\item \(\Stab_{\phi}(g) = \Fix(\inn{g} \circ \phi)\).
	\end{enumerate}
\end{lemma}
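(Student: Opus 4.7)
The plan is to verify both statements by direct manipulation of the definitions; neither requires any machinery beyond the definitions of twisted conjugacy and inner automorphism.

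For part (2), I would simply chase the definitions. An element $x$ lies in $\Stab_{\phi}(g)$ precisely when $xg\inv{\phi(x)} = g$; rearranging gives $xg = g\phi(x)$, and conjugating by $g$ yields $x = g\phi(x)\inv{g} = (\inn{g} \circ \phi)(x)$, which is the condition $x \in \Fix(\inn{g} \circ \phi)$. Every step is reversible, so the two subgroups coincide.

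For part (1), my plan is to exhibit an explicit bijection between $\Reid[\phi]$ and $\Reid[\inn{g} \circ \phi]$. The natural candidate is the map induced by right multiplication by $\inv{g}$, namely $[x]_{\phi} \mapsto [x\inv{g}]_{\inn{g} \circ \phi}$. To show this is well-defined, suppose $x = zy\inv{\phi(z)}$ for some $z \in G$. Using $(\inn{g} \circ \phi)(z)^{-1} = g\inv{\phi(z)}\inv{g}$, a short computation gives
\[
    z(y\inv{g})\invb{(\inn{g} \circ \phi)(z)} = zy\inv{g} \cdot g \inv{\phi(z)} \inv{g} = zy\inv{\phi(z)}\inv{g} = x\inv{g},
\]
so $x\inv{g}$ and $y\inv{g}$ are $(\inn{g} \circ \phi)$-conjugate via the same element $z$. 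Each step is again reversible, so right multiplication by $g$ furnishes an inverse map, and we obtain the desired bijection.

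There is no genuine obstacle in either part; the only thing to watch is that the bijection in (1) is well-defined on equivalence classes, which is handled by the reversibility of the computation above.
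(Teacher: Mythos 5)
Both parts are correct. For part (2) your computation coincides with the paper's, which is the same one-line rearrangement. For part (1) the paper simply cites \cite[Corollary~2.5]{FelshtynTroitsky07} rather than proving the statement; your explicit bijection $[x]_{\phi} \mapsto [x\inv{g}]_{\inn{g} \circ \phi}$, with well-definedness and invertibility checked directly, is a self-contained argument that is surely close to what the cited source does, so it is essentially the same route, merely spelled out instead of deferred to a reference.
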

\begin{proof}
	For the first item, we refer the reader to \cite[Corollary~2.5]{FelshtynTroitsky07}.
	
	For the second, the computation is straightforward:
	\begin{align*}
		\Stab_{\phi}(g)	&=	\{x \in G \mid xg \inv{\phi(x)} = g \}	\\
						&=	\{x \in G \mid x = g\phi(x) \inv{g}\}	\\
						&=	\Fix(\inn{g} \circ \phi).\qedhere
	\end{align*}
\end{proof}

\begin{proof}[Proof of \cref{prop:infiniteTwistedStabilisersImpliesInfiniteReidemeisterNumber}]
	Since \(\Stab_{\phi}(g) = \Fix(\inn{g} \circ \phi)\) and \(R(\inn{g} \circ \phi) = R(\phi)\) by the previous lemma, we may switch to \(\inn{g} \circ \phi\) and thus assume that \(\Fix(\phi)\) is infinite.
	
	Fix \(n \geq 1\) and let \(g_{1}, \ldots, g_{n}\) be (distinct) elements in \(\Fix(\phi)\). Since \(G\) is residually finite, we can find a finite index normal subgroup \(N\) such that \(x_{1}N, \ldots, x_{n}N\) are all distinct. Since \(G\) is of type \((F)\), we can find, using \cref{lem:FullyCharacteristicFiniteIndexSubgroup}, a fully characteristic subgroup \(K\) of finite index contained in \(N\) such that \(x_{1}K, \ldots, x_{n}K\) are all distinct. Let \(\bar{\phi}\) denote the induced endomorphism on \(G / K\). Then \(\{x_{1}K, \ldots, x_{n}K\} \subseteq \Fix(\bar{\phi})\), which implies that
	\[
		\size{\Fix(\bar{\phi})} \geq n.
	\]
	By \cref{lem:SizeFixpointsBoundedByReidemeisterNumber}, \(\size{\Fix(\bar{\phi})} \leq 2^{2^{R(\bar{\phi})}}\), which implies that
	\[
		R(\bar{\phi}) \geq \log_{2}(\log_{2}(\size{\Fix(\bar{\phi})})) \geq \log_{2} (\log_{2}(n)).
	\]
	By \cref{lem:LowerBoundReidemeisterNumberByQuotient}, \(R(\phi) \geq R(\bar{\phi})\). Combining everything, we get
	\[
		R(\phi) \geq \log_{2} (\log_{2}(n)).
	\]
	As \(n\) was arbitrary and \(\log_{2}(\log_{2}(n))\) tends to infinity as \(n\) does, we obtain the equality \(R(\phi) = \infty\).

\end{proof}

\begin{cor}	\label{cor:twistedStabiliserFGTFResiduallyFinite}
	Let \(G\) be a torsion-free residually finite group of type \((F)\) and let \(\phi \in \End(G)\). If \(R(\phi) < \infty\), then \(\Stab_{\phi}(g)\) is trivial for all \(g \in G\).
\end{cor}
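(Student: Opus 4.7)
The corollary should follow almost immediately by combining the contrapositive of Proposition \ref{prop:infiniteTwistedStabilisersImpliesInfiniteReidemeisterNumber} with the torsion-freeness of $G$. My plan is as follows.

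First, assume $R(\phi) < \infty$ and fix an arbitrary $g \in G$. By the contrapositive of Proposition \ref{prop:infiniteTwistedStabilisersImpliesInfiniteReidemeisterNumber}, $\Stab_{\phi}(g)$ must be finite (since an infinite twisted stabiliser at any element would force $R(\phi) = \infty$, contradicting the hypothesis).

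Second, I would invoke Lemma \ref{lem:PropertiesTwistedConjugacyCompositionWithInnerAutomorphism}(2) to rewrite $\Stab_{\phi}(g) = \Fix(\inn{g} \circ \phi)$. For any endomorphism $\psi$ of $G$, the set $\Fix(\psi)$ is a subgroup of $G$: closure and inverses follow from the multiplicativity of $\psi$. Hence $\Stab_{\phi}(g)$ is a finite subgroup of $G$.

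Finally, since $G$ is torsion-free, its only finite subgroup is the trivial one, so $\Stab_{\phi}(g) = 1$. As $g$ was arbitrary, this gives the conclusion. There is no real obstacle here; the entire content of the corollary is already contained in Proposition \ref{prop:infiniteTwistedStabilisersImpliesInfiniteReidemeisterNumber} together with the elementary observation that a finite subgroup of a torsion-free group is trivial.
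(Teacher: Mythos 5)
Your proof is correct and takes essentially the same approach as the paper: apply \cref{prop:infiniteTwistedStabilisersImpliesInfiniteReidemeisterNumber} (contrapositively) to conclude each $\Stab_{\phi}(g)$ is finite, then observe that it is a subgroup of the torsion-free group $G$ and hence trivial. The only cosmetic difference is that you justify the subgroup property via $\Stab_{\phi}(g) = \Fix(\inn{g}\circ\phi)$, whereas the paper had already observed directly (just before \cref{lem:twistedCommutatorSubgroup}) that twisted stabilisers are subgroups as stabilisers of a group action.
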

\begin{proof}
	\cref{prop:infiniteTwistedStabilisersImpliesInfiniteReidemeisterNumber} implies that \(\Stab_{\phi}(g)\) is finite for all \(g \in G\). As twisted stabilisers are subgroups, they have to be trivial, since \(G\) is torsion-free.
\end{proof}
The condition that \(G\) is of type \((F)\) cannot be dropped, see \cite[Example~3.9]{Senden21}. With this corollary, we can prove \cref{prop:productFormulaCentralExtension}.

\begin{proof}[Proof of \cref{prop:productFormulaCentralExtension}]
	If \(R(\bar{\phi}) = \infty\), then \(R(\phi) = \infty\) as well by \cref{lem:LowerBoundReidemeisterNumberByQuotient}, so the equality \(R(\phi) = R(\restr{\phi}{C}) R(\bar{\phi})\) holds in that case. Therefore, assume that \(R(\bar{\phi}) < \infty\).  \cref{cor:twistedStabiliserFGTFResiduallyFinite} then implies that the twisted stabilisers of \(\bar{\phi}\) are trivial. Consequently,
	\[
		\inv{\pi}(\Stab_{\bar{\phi}}(gC)) = \inv{\pi}(1) = C
	\]
	for all \(g \in G\). This implies that
	\[
		[g, \inv{\pi}(\Stab_{\bar{\phi}}(gC))]^{\phi}  = [g, C]^{\phi} = \{\inv{g} \inv{c} g \phi(c) \mid c \in C \} = \{\inv{c} \phi(c) \mid c \in C\} = I 
	\]
	for all \(g \in G\), as \(C \leq Z(G)\). Recall that \(R(\restr{\phi}{C}) = [C : I]\). Therefore, using \cref{theo:sumFormulaReidemeisterNumberCentralExtension}, we obtain
	\[
		R(\phi) = \sum_{[gC]_{\bar{\phi}} \in \Reid[\bar{\phi}]} [C : I] = R(\restr{\phi}{C})R(\bar{\phi}).	\qedhere
	\]

\end{proof}

As mentioned earlier, we can also derive \cref{prop:productFormulaCentralExtension} from \cref{theo:AdditionFormula} if \(R(\bar{\phi}) < \infty\), since in that case, the twisted stabiliser of \(\bar{\phi}\) are trivial. \cref{theo:AdditionFormula} then reads
	\[
		R(\phi) = \sum_{[gC]_{\bar{\phi}} \in \Reid[\bar{\phi}]} R(\restr{(\inn{g} \circ \phi)}{C}).
	\]
	Since \(C\) is central, \(\inn{g}\) restricted to \(C\) is the identity map for each \(g \in G\). Consequently, the summation above simplifies to
	\[
		R(\phi) = \sum_{[gC]_{\bar{\phi}} \in \Reid[\bar{\phi}]} R(\restr{\phi}{C}) = R(\restr{\phi}{C})R(\bar{\phi}). \qedhere
	\]

\begin{cor}	\label{cor:productFormulaCentreExtensionResiduallyFinite}
	Let \(G\) be a residually finite group of type \((F)\) such that \(G / Z(G)\) is torsion-free. Let \(\phi \in \Aut(G)\). Write \(\restr{\phi}{Z(G)}\) and \(\bar{\phi}\) for the induced automorphisms on \(Z(G)\) and \(G / Z(G)\), respectively. Then \(R(\phi) = R(\restr{\phi}{Z(G)})R(\bar{\phi})\).
\end{cor}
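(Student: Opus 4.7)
The plan is to reduce the statement directly to \cref{prop:productFormulaCentralExtension} applied with the central subgroup \(C = Z(G)\). Three hypotheses need checking: that \(\phi\) preserves \(Z(G)\), that \(G / Z(G)\) is of type \((F)\), and that \(G / Z(G)\) is residually finite. Torsion-freeness of \(G / Z(G)\) is already assumed, and once all four conditions are in place the proposition immediately yields the claimed equality.

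The first two conditions are essentially cost-free. Since \(\phi\) is an automorphism, \(\phi(Z(G)) = Z(G)\) by the standard argument that automorphisms preserve the centre. For type \((F)\), any subgroup of \(G / Z(G)\) of index \(n\) is of the form \(H / Z(G)\) with \(Z(G) \leq H \leq G\) and \([G : H] = n\); these correspond injectively to subgroups of \(G\) of index \(n\), of which there are only finitely many. Thus \(G / Z(G)\) inherits type \((F)\) from \(G\).

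The step requiring actual work, and the only genuine obstacle, is residual finiteness of \(G / Z(G)\), which is not automatic for quotients of residually finite groups. The plan is to exploit \cref{lem:FullyCharacteristicFiniteIndexSubgroup}. Given \(g \in G \setminus Z(G)\), pick \(h \in G\) with \([g, h] \neq 1\). Using residual finiteness of \(G\), choose a finite-index normal subgroup \(N\) with \([g, h] \notin N\), and then apply \cref{lem:FullyCharacteristicFiniteIndexSubgroup} to obtain a fully characteristic (hence normal) finite-index subgroup \(K \leq N\). The inequality \([gK, hK] \neq 1\) in \(G / K\) shows \(gK \notin Z(G / K)\); since \(Z(G) K / K \leq Z(G / K)\), this forces \(g \notin Z(G) K\). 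The subgroup \(Z(G) K / Z(G)\) of \(G / Z(G)\) is then normal (as \(Z(G) K\) is a product of two normal subgroups), of finite index (at most \([G : K]\)), and avoids the coset \(g Z(G)\). This separates \(g Z(G)\) from the identity in a finite quotient, establishing residual finiteness.

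With all four hypotheses verified, \cref{prop:productFormulaCentralExtension} delivers \(R(\phi) = R(\restr{\phi}{Z(G)}) R(\bar{\phi})\) at once. The only place where the full force of the assumptions is used is the residual-finiteness step, and it is precisely there that \cref{lem:FullyCharacteristicFiniteIndexSubgroup} (which in turn relies on type \((F)\)) plays its role.
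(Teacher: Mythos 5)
Your proposal is correct, but for the crux of the matter --- residual finiteness of \(G/Z(G)\) --- you take a genuinely different route from the paper. The paper invokes Baumslag's theorem (extended from finitely generated to type \((F)\) groups): \(\Aut(G)\) is residually finite, hence so is the subgroup \(\Inn(G) \cong G/Z(G)\). You instead give a direct separation argument: for \(g \notin Z(G)\) pick \(h\) with \([g,h] \neq 1\), find a finite-index normal subgroup \(N\) missing \([g,h]\), observe that \(Z(G)N/N \leq Z(G/N)\) while \(gN \notin Z(G/N)\), and conclude \(g \notin Z(G)N\), so the finite quotient \(G/Z(G)N\) of \(G/Z(G)\) separates \(gZ(G)\) from the identity. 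This is a valid and classical argument, and it is more self-contained than the paper's, avoiding the appeal to Baumslag. Two remarks on economy: your detour through \cref{lem:FullyCharacteristicFiniteIndexSubgroup} to replace \(N\) by a fully characteristic \(K\) is unnecessary --- every step of your argument (centrality of \(Z(G)N/N\), normality of \(Z(G)N\), finiteness of the index) needs only that \(N\) is normal of finite index, which residual finiteness already supplies --- and as a consequence your residual-finiteness step in fact uses neither type \((F)\) nor the corollary's automorphism hypothesis; it proves the general statement that \(G/Z(G)\) is residually finite whenever \(G\) is. Type \((F)\) is still needed, as you correctly note, for the lattice-correspondence argument that \(G/Z(G)\) is of type \((F)\).
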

\begin{proof}
	Since \(G\) is residually finite of type \((F)\), \(\Aut(G)\) is residually finite as well, a famous result due to G.\ Baumslag \cite[Theorem~1]{Baumslag63}; he proves it for finitely generated groups, but only uses the fact that they are finitely generated to prove they are (in our terminology) of type \((F)\).
	Hence, his result also holds for groups of type \((F)\).
	Next, as subgroups of residually finite groups are residually finite, \(G / Z(G) \cong \Inn(G) \leq \Aut(G)\) is residually finite.
	By assumption, \(G / Z(G)\) is torsion-free.
	Finally, as there is a one-to-one correspondence between the subgroups of \(G / Z(G)\) and those of \(G\) containing \(Z(G)\), \(G / Z(G)\) is also of type \((F)\).
	Therefore, \cref{prop:productFormulaCentralExtension} applies and we derive \(R(\phi) = R(\restr{\phi}{Z(G)})R(\bar{\phi})\).
\end{proof}

\section{Three applications of the product formula}
From this section onwards, we focus on \fgtf nilpotent groups.
We start with proving the product formula for nilpotent groups (\cref{theo:productFormulaFGTFNilpotentGroups}) using the one for central extensions of residually finite groups.
Afterwards, we discuss three applications of the product formula.

\begin{proof}[Proof of \cref{theo:productFormulaFGTFNilpotentGroups}]
	We proceed by induction on \(c\). If \(c = 1\), then \(\phi_{1}\) is essentially \(\phi\), in which case the product formula clearly holds. So, suppose it holds for all \fgtf nilpotent groups and central series of length \(c - 1\) as in the statement. Let
	\[
		1 = N_{0} \lhd N_{1} \lhd \ldots \lhd N_{c - 1} \lhd N_{c} = N
	\]
	be one of length \(c\). Since \(N_{1}\) is a central subgroup and \(N / N_{1}\) is \fgtf nilpotent, hence residually finite and of type \((F)\), \cref{prop:productFormulaCentralExtension} implies that
	\[
		R(\phi) = R(\phi_{1}) R(\bar{\phi}),
	\]
	where \(\bar{\phi}\) is the induced endomorphism on \(N / N_{1}\). The induction hypothesis yields
	\[
		R(\bar{\phi}) = \prod_{i = 2}^{c} R(\bar{\phi}_{i}),
	\]
	where \(\bar{\phi}_{i} \in \End \left( \frac{N_{i}/N_{1}}{N_{i - 1}/N_{1}}\right)\) for \(i \in\{2, \ldots c\}\). Indeed, it is straightforward that
	\[
		1 = \frac{N_{1}}{N_{1}} \lhd \frac{N_{2}}{N_{1}} \lhd \ldots \lhd \frac{N_{c - 1}}{N_{1}} \lhd \frac{N_{c}}{N_{1}}
	\]
	is a central series of length \(c - 1\) that satisfies the conditions from the statement of the product formula.
	Also, the diagram
	\[
		\begin{tikzcd}
			& N_{i} / N_{i - 1}	\ar[r, "\cong"] \ar[d, "\phi_{i}"]	& \frac{N_{i} / N_{1}}{N_{i - 1} / N_{1}}		\ar[d, "\bar{\phi}_{i}"]	\\
			& N_{i} / N_{i - 1}	\ar[r, "\cong"]				& \frac{N_{i} / N_{1}}{N_{i - 1} / N_{1}}
		\end{tikzcd}
	\]
	commutes and thus shows that \(R(\phi_{i}) = R(\bar{\phi}_{i})\). Consequently,
	\[
		R(\phi) = R(\phi_{1}) R(\bar{\phi}) = R(\phi_{1})\prod_{i = 2}^{c} R(\bar{\phi}_{i}) = \prod_{i = 1}^{c} R(\phi_{i}). \qedhere
	\]
\end{proof}

For ease of reference, we call a central series that satisfies the two additional conditions from \cref{theo:productFormulaFGTFNilpotentGroups} a \emph{\(\phi\)-invariant \tf{} central series}, or \(\phi\)-ITF-series for short.

\subsection{Application 1: a sufficient condition for infinite Reidemeister number}
In the literature, there is one central series that is ubiquitous in computations involving the product formula: the \emph{adapted lower central series}.
Recall that the lower central series of a group \(G\) is given by \(\gamma_{1}(G) := G\) and \(\gamma_{i + 1}(G) := [\gamma_{i}(G), G]\) for \(i \geq 1\).

To define the adapted lower central series, we need the notion of isolators.

\begin{defin}
Let \(G\) be a group and \(H\) a subset of \(G\). We define the \emph{isolator} of \(H\) as
\[
	\sqrt[G]{H} := \{g \in G \mid \exists k \in \Z_{> 0}: g^{k} \in H\}.
\]
\end{defin}
In general, the isolator of a subgroup is not necessarily a subgroup. For example, if \(H\) is the trivial subgroup, then \(\sqrt[G]{H}\) is the \emph{set} of all torsion elements in \(G\).

\begin{defin}
	Let \(G\) be a group. The \emph{adapted lower central series} of \(G\) given by
	\[
		\sqrt[G]{\gamma_{1}(G)} \geq \sqrt[G]{\gamma_{2}(G)} \geq \dots
	\]
\end{defin}

Whereas the lower central series characterises nilpotent groups, the adapted lower central series characterises \emph{torsion-free} nilpotent groups.
\begin{lemma}	\label{lem:TFNilpotentIffALCSTerminates}
	Let \(N\) be a group. Then \(N\) is torsion-free nilpotent if and only if the adapted lower central series of \(N\) reaches the trivial group in finitely many steps.
\end{lemma}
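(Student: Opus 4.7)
The plan is that both implications follow essentially from unpacking the definition of the isolator, with no substantive obstacle, so I would present them in a compact back-and-forth.

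For the forward direction, assume $N$ is torsion-free nilpotent of class $c$, so that $\gamma_{c+1}(N) = 1$. I would then observe that
\[
    \sqrt[N]{\gamma_{c+1}(N)} = \sqrt[N]{\{1\}} = \{g \in N \mid \exists k \in \Z_{>0} : g^{k} = 1\},
\]
which by definition is exactly the set of torsion elements of $N$. Since $N$ is torsion-free, this set equals $\{1\}$, and the adapted lower central series has reached the trivial group after at most $c + 1$ steps.

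For the converse, suppose $\sqrt[N]{\gamma_{k}(N)} = 1$ for some $k \geq 1$. Taking the exponent $k = 1$ in the definition of the isolator gives the trivial inclusion $\gamma_{k}(N) \subseteq \sqrt[N]{\gamma_{k}(N)}$, so $\gamma_{k}(N) = 1$, which shows that $N$ is nilpotent of class at most $k - 1$. For torsion-freeness, if $g \in N$ satisfies $g^{m} = 1$ for some $m > 0$, then $g^{m} \in \gamma_{k}(N)$, so by definition $g \in \sqrt[N]{\gamma_{k}(N)} = 1$. Hence $g = 1$ and $N$ is torsion-free.

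The only point to be slightly careful about is that the adapted lower central series is being treated as a descending chain of \emph{subsets} (since the isolator of a subgroup need not be a subgroup in an arbitrary group); both directions of the proof only use the description of $\sqrt[N]{H}$ as a set, so this causes no difficulty. I do not expect any step here to be a real obstacle — the lemma is really just a restatement of what the adapted lower central series encodes, namely nilpotency plus the absence of torsion.
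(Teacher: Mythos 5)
Your proof is correct and uses essentially the same approach as the paper: both directions reduce to the observations that $N$ is nilpotent iff some $\gamma_i(N)$ is trivial, that $N$ is torsion-free iff $\sqrt[N]{1} = 1$, and that $\sqrt[N]{\gamma_i(N)} = 1$ is equivalent to the conjunction of these two facts. Your version just spells out the two implications separately where the paper states the equivalence more compactly; the remark about treating the isolators merely as subsets is a sensible precaution but is also implicit in the paper's argument.
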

\begin{proof}
	On the one hand, the group \(N\) is nilpotent if and only if \(\gamma_{i}(N) = 1\) for some \(i \geq 1\). 
	On the other hand, \(N\) is torsion-free if and only if \(\sqrt[N]{1} = 1\).
	Consequently, \(N\) is torsion-free nilpotent if and only if \(\sqrt[N]{\gamma_{i}(N)} = 1\) for some \(i \geq 1\), as this encompasses both \(\gamma_{i}(N) = 1\) and \(\sqrt[N]{1} = 1\).
\end{proof}

The adapted lower central series indeed forms a \(\phi\)-ITF-series for all \(\phi \in \End(N)\), by the following lemma:
\begin{lemma}[{\cite[Lemma~1.1.2]{Dekimpe96}}]	\label{lem:PropertiesALCS}
	Let \(G\) be a group and \(\gamma_{k}(G)\) its lower central series.
	\begin{enumerate}[(1)]
		\item For all \(k \in \N_{0}\), \(\sqrt[G]{\gamma_{k}(G)}\) is a fully characteristic subgroup of \(G\).	\label{item:ALCSFullyCharacteristic}
		\item For all \(k \in \N_{0}\), \(G / \sqrt[G]{\gamma_{k}(G)}\) is torsion-free.	\label{item:ALCSFactorsTorsionfree}
		\item For all \(k, l \in \N_{0}\), \(\left[\sqrt[G]{\gamma_{k}(G)}, \sqrt[G]{\gamma_{l}(G)}\right] \leq \sqrt[G]{\gamma_{k + l}(G)}\).
%		\item \(\forall k, l\in \N_{0}\) with \(l \geq k\): if \(N := \sqrt[G]{\gamma_{l}(G)}\), then
%		\[
%			\sqrt[G / N]{\gamma_{k}(G / N)} = \frac{\sqrt[G]{\gamma_{k}(G)}}{N}
%		\]
	\end{enumerate}
\end{lemma}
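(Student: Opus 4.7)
My overall strategy splits the lemma into two tasks. Items (1) and (2) reduce, in a uniform way, to the classical fact that in a nilpotent group the torsion elements form a fully characteristic subgroup; item (3) is genuinely different and requires commutator calculus.

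For (1) and (2), I would first observe that \(\gamma_{k}(G)\) itself is fully characteristic, because any \(\phi \in \End(G)\) satisfies \(\phi([x,y]) = [\phi(x),\phi(y)]\) and the claim follows by induction on \(k\). Thus \(Q_{k} := G / \gamma_{k}(G)\) is a well-defined nilpotent group of class at most \(k-1\), and any endomorphism of \(G\) descends to one of \(Q_{k}\). Next, I would invoke (or prove by induction on the nilpotency class) that the set \(T(Q_{k})\) of torsion elements of \(Q_{k}\) is a fully characteristic subgroup; the non-trivial point is closure under products, which follows by reducing modulo the centre and using commutator identities to control the power of a product \(ab\) needed first to land in the centre and then to vanish. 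By the very definition of the isolator, \(\sqrt[G]{\gamma_{k}(G)}\) is the preimage of \(T(Q_{k})\) under the quotient map \(G \to Q_{k}\); this yields at once that it is a subgroup, that it is preserved by any \(\phi \in \End(G)\), and that \(G / \sqrt[G]{\gamma_{k}(G)} \cong Q_{k} / T(Q_{k})\) is torsion-free, settling both (1) and (2).

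For (3), take \(a \in \sqrt[G]{\gamma_{k}(G)}\) and \(b \in \sqrt[G]{\gamma_{l}(G)}\) with \(a^{m} \in \gamma_{k}(G)\) and \(b^{n} \in \gamma_{l}(G)\); I must exhibit an integer \(N \geq 1\) with \([a,b]^{N} \in \gamma_{k+l}(G)\). The plan is to pass to the quotient \(Q := G / \gamma_{k+l}(G)\), then modulo its torsion subgroup \(T(Q)\) (which is a subgroup by the just-established item (1) applied to \(Q\)) to obtain a \tf{} nilpotent group \(Q / T(Q)\), and finally to embed this into its Mal'cev \(\Q\)-completion \(M := \malclos{(Q/T(Q))}\), a uniquely divisible nilpotent group in which \(\gamma_{k}(M)\) coincides with its own isolator. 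In \(M\) the image \(\bar a\) is a rational root of \(\overline{a^{m}} \in \gamma_{k}(M)\) and therefore lies in \(\gamma_{k}(M)\); likewise \(\bar b \in \gamma_{l}(M)\), so \([\bar a, \bar b] \in \gamma_{k+l}(M) = 1\), which on the way back translates to some power of \([a,b]\) lying in \(\gamma_{k+l}(G)\). The main obstacle is establishing the key property of the Mal'cev completion that \(\gamma_{k}(M) = \sqrt[M]{\gamma_{k}(Q/T(Q))}\); a more elementary but calculation-heavy alternative is to expand \([a^{m}, b^{n}]\) directly in \(Q\) by iterated use of \([xy,z] = [x,z]^{y}[y,z]\) and \([x,yz] = [x,z][x,y]^{z}\), identify \([a,b]^{mn}\) as the leading term modulo corrections of strictly greater commutator-weight, and iterate through the finite nilpotency class of \(Q\).
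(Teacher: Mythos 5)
The paper does not prove this lemma; it is cited from Dekimpe's book and used as a black box, so there is no in-paper argument to compare against. I will therefore evaluate your proposal on its own merits.

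Your treatment of items (1) and (2) is correct and is the standard route: since \(\gamma_{k}(G)\) is verbal (hence fully characteristic), any \(\phi \in \End(G)\) descends to \(Q_{k} = G/\gamma_{k}(G)\), a nilpotent group; the torsion elements of a nilpotent group form a fully characteristic subgroup \(T(Q_{k})\); and \(\sqrt[G]{\gamma_{k}(G)}\) is precisely the preimage of \(T(Q_{k})\), which yields (1) and (2) simultaneously. This is clean and essentially complete once one grants the classical fact about torsion subgroups of nilpotent groups, which you correctly isolate.

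Item (3) is where the genuine content of the lemma lies, and your two routes are of uneven reliability. The Mal'cev-completion route is sound in outline: pass to \(Q = G/\gamma_{k+l}(G)\), then to the torsion-free quotient \(Q/T(Q)\) (legitimate by the already-established (1)), embed into \(M := (Q/T(Q))^{\Q}\), note \(\gamma_{k+l}(M)=1\) because Mal'cev completion preserves nilpotency class, and use isolatedness of \(\gamma_{k}(M)\) to place \(\hat a\) in \(\gamma_{k}(M)\) and \(\hat b\) in \(\gamma_{l}(M)\), forcing \([\hat a,\hat b]=1\). The concern you should flag explicitly, however, is \emph{circularity}: the statement that \(\gamma_{k}(M)=\sqrt[M]{\gamma_{k}(Q/T(Q))}\) (or even merely that \(\gamma_{k}(M)\) is isolated in a radicable torsion-free nilpotent group \(M\)) is a theorem whose standard proofs in the group-theoretic literature go \emph{through} precisely the commutator inclusion you are trying to establish, or through the Lie correspondence, which is heavy machinery for a lemma of this type. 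Unless you are willing to take the Mal'cev theory as a genuine black box proved by independent (Lie-theoretic) means, this route risks being a detour that secretly presupposes (3).

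Your elementary alternative is the one actually underlying the cited reference, but as stated it is only a sketch with a real gap: writing \(1=[a^{m},b^{n}] = [a,b]^{mn}\cdot(\text{corrections})\) in \(Q\) and saying the corrections have ``strictly greater commutator-weight'' does not by itself finish the argument, because those corrections involve iterated commutators of \(a\) and \(b\) (not of \(\gamma_{k}\)- and \(\gamma_{l}\)-elements), and it is not immediate that their weights are measured against the filtration \(\sqrt[G]{\gamma_{\bullet}(G)}\) in a way that lets you induct. The clean way to make this rigorous is an induction on the nilpotency class of \(Q\): pass to \(Q/Z(Q)\), obtain by induction that some power of \([a,b]\) lies in the centre, and then control that central element by a Hall--Petrescu-type identity relating \([a^{m},b^{n}]\) to \([a,b]^{mn}\) modulo higher commutators. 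You should either carry out this induction explicitly or cite the precise commutator-collection lemma you are invoking; as written, ``iterate through the finite nilpotency class of \(Q\)'' glosses over exactly the step that makes the lemma non-trivial.
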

%The second item in particular implies that \(\sqrt[G]{\gamma_{k}(G)} / \sqrt[G]{\gamma_{k + 1}(G)}\) is a torsion-free group for all groups \(G\) and \(k \geq 1\).

This explains why the adapted lower central series is frequently used in computations of Reidemeister numbers of \fgtf nilpotent groups.

Using the adapted lower central series, we can construct other \(\phi\)-ITF-series based on a given normal subgroup.
We first need two properties of isolators: one with respect to endomorphisms, and one concerning nilpotent groups.

\begin{lemma}	\label{lem:IsolatorOfPhiInvariantIsPhiInvariant}
	Let \(G\) be a group, \(H\) a subgroup, and \(\phi \in \End(G)\). Suppose that \(\phi(H) \leq H\). Then also \(\phi(\sqrt[G]{H}) \subseteq \sqrt[G]{H}\).
\end{lemma}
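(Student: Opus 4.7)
The plan is to unfold the definition of the isolator and use the fact that $\phi$ is a homomorphism, so that powers commute with $\phi$. Concretely, I would pick an arbitrary element $g \in \sqrt[G]{H}$ and aim to show that $\phi(g) \in \sqrt[G]{H}$.

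By definition of the isolator, there exists $k \in \Z_{> 0}$ with $g^{k} \in H$. Since $\phi$ is an endomorphism,
\[
	\phi(g)^{k} = \phi(g^{k}) \in \phi(H) \leq H,
\]
where the last inclusion uses the hypothesis $\phi(H) \leq H$. Thus $\phi(g)^{k} \in H$ for this same positive integer $k$, which witnesses that $\phi(g) \in \sqrt[G]{H}$. As $g \in \sqrt[G]{H}$ was arbitrary, we conclude $\phi(\sqrt[G]{H}) \subseteq \sqrt[G]{H}$.

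There is no real obstacle here: the statement is essentially a one-line consequence of $\phi$ being a homomorphism together with $\phi$-invariance of $H$. The only mild point worth noting is that we do not claim $\sqrt[G]{H}$ is a subgroup (in general it is not, as the paper already remarks), so the argument is purely set-theoretic and uses only that $\phi$ preserves powers.
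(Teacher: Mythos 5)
Your proof is correct and is essentially identical to the paper's own argument: both take an arbitrary $g \in \sqrt[G]{H}$ with $g^{k} \in H$, apply $\phi$ to get $\phi(g)^{k} = \phi(g^{k}) \in H$, and conclude $\phi(g) \in \sqrt[G]{H}$.
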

\begin{proof}
	Let \(g \in \sqrt[G]{H}\) be arbitrary. Then \(g^{k}\in H\) for some \(k \geq 1\). Hence, 
	\[
		\phi(g)^{k} = \phi(g^{k}) \in H
	\]
	by assumption on \(H\). By definition of the isolator of \(H\), it follows that \(\phi(g) \in \sqrt[G]{H}\).
\end{proof}

\begin{lemma}[{\cite[Lemma~2.8]{Baumslag71a}}]	\label{lem:ConditionFiniteIndexSubgroupFGNilpotentGroups}
	Let \(N\) be a \fg nilpotent group and \(H\) a subgroup.
	If for every \(n \in N\), there is a \(k \in \Z_{> 0}\) such that \(n^{k} \in H\), then \(H\) has finite index in \(N\).
\end{lemma}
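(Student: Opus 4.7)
The plan is to induct on the nilpotency class $c$ of $N$. For the base case $c = 1$, $N$ is \fg abelian, so $H$ is normal and $N / H$ is a \fg abelian group in which every element has finite order by hypothesis. A \fg torsion abelian group is finite, so $[N : H] < \infty$.

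For the inductive step, let $N$ have class $c \geq 2$ and assume the lemma for \fg nilpotent groups of strictly smaller class. Set $Z := \gamma_{c}(N)$, the last non-trivial term of the lower central series, which lies in the centre of $N$. The quotient $N / Z$ is \fg nilpotent of class at most $c - 1$, and if we put $\bar{H} := HZ / Z$, the hypothesis passes to the quotient: $n^{k} \in H$ implies $\bar{n}^{k} \in \bar{H}$. The induction hypothesis therefore yields $[N / Z : \bar{H}] < \infty$, \ie $[N : HZ] < \infty$.

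It remains to show $[HZ : H] < \infty$. Because $Z$ is central, $H$ is normal in $HZ$, and the second isomorphism theorem gives $[HZ : H] = [Z : H \cap Z]$. Subgroups of \fg nilpotent groups are themselves \fg, so $Z$ is a \fg abelian group, and every $z \in Z$ satisfies $z^{k} \in H \cap Z$ for some $k \geq 1$ by hypothesis. Applying the base case to $Z$ and $H \cap Z$ gives $[Z : H \cap Z] < \infty$. The tower law then yields $[N : H] = [N : HZ] \cdot [HZ : H] < \infty$.

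The argument is essentially routine; the main work is confirming the standing facts about \fg nilpotent groups -- namely that $\gamma_{c}(N)$ is central and that subgroups such as $Z$ are again \fg -- both of which are classical consequences of \fg nilpotent groups being polycyclic.
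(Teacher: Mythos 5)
The paper does not prove this lemma; it simply cites it from Baumslag's \emph{Lecture Notes on Nilpotent Groups}, so there is no in-paper argument to compare against. Your proof is correct. The induction on nilpotency class via $Z := \gamma_c(N)$ is the standard route: the hypothesis clearly passes to $N/Z$ and to $Z$ itself (where $z^k \in H$ forces $z^k \in H \cap Z$), the centrality of $Z$ guarantees $H \normal HZ$ so that the second isomorphism theorem identifies $[HZ:H]$ with $[Z : H \cap Z]$, and the base case is the finiteness of a finitely generated torsion abelian group. The two outside facts you invoke — that $\gamma_c(N)$ is central in a class-$c$ group and that subgroups of finitely generated nilpotent groups are finitely generated — are both classical, and you are right to flag them as the only non-routine inputs.
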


\begin{prop}	\label{prop:IsolatorOfSubgroupIsSubgroupNilpotentGroups}
	Let \(N\) be a nilpotent group and \(H\) a subgroup. Then \(\sqrt[N]{H}\) is a subgroup as well.
	
	If \(N\) is, in addition, finitely generated, then \(H\) has finite index in \(\sqrt[N]{H}\).
\end{prop}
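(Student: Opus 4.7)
My plan is to split the proof into two parts: first showing $\sqrt[N]{H}$ is a subgroup of $N$, then deducing the finite-index claim via \cref{lem:ConditionFiniteIndexSubgroupFGNilpotentGroups}.

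For the first part, I would induct on the nilpotency class $c$ of $N$. Containment of the identity and closure under inverses are immediate from the definition and the fact that $H$ is a subgroup, so the substance is closure under products. In the base case $c = 1$, where $N$ is abelian, commutativity directly gives $(ab)^{kl} = a^{kl} b^{kl} \in H$ whenever $a^k, b^l \in H$. In the inductive step, I would quotient by $\gamma_c(N) \leq Z(N)$ and apply the induction hypothesis in $\bar{N} := N / \gamma_c(N)$ (of class $c - 1$) to obtain $(ab)^m \in H \gamma_c(N)$ for some $m \geq 1$. Writing $(ab)^m = hz$ with $h \in H$ and $z \in \gamma_c(N) \subseteq Z(N)$, centrality of $z$ yields $(ab)^{mn} = h^n z^n$, reducing the task to producing $n \geq 1$ with $z^n \in H$. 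Absorbing this central remainder into $H$ is the main obstacle: exploiting that $\gamma_c(N)$ is central and abelian, one combines the above relation with Hall--Petresco-type commutator identities applied to $a$ and $b$ to force some power of $z$ into $H$. This is the content of a classical theorem of Mal'cev, whose proof can be imported from standard references on nilpotent groups.

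For the finite-index part, assume $N$ is finitely generated. Then $\sqrt[N]{H}$, being a subgroup of the finitely generated nilpotent (hence polycyclic) group $N$, is itself finitely generated nilpotent. By definition of the isolator, every element of $\sqrt[N]{H}$ has some positive power in $H$. Applying \cref{lem:ConditionFiniteIndexSubgroupFGNilpotentGroups} with ambient group $\sqrt[N]{H}$ and subgroup $H$ then yields $[\sqrt[N]{H} : H] < \infty$. Thus the second part follows essentially for free from the first part together with Baumslag's lemma, and the entire weight of the argument lies in closing the inductive step above.
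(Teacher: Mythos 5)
Your proposal takes essentially the same route as the paper: for the first claim the paper simply cites Khukhro's textbook (Theorem 2.5.8), and although you sketch an induction on the nilpotency class, you acknowledge that closing the inductive step (forcing a power of the central remainder $z$ into $H$) is precisely the classical Mal'cev/Hall--Petresco result and defer to a standard reference, which amounts to the same thing. For the second claim your argument is identical to the paper's: apply \cref{lem:ConditionFiniteIndexSubgroupFGNilpotentGroups} with ambient group $\sqrt[N]{H}$ (finitely generated nilpotent as a subgroup of $N$) and subgroup $H$, noting that every element of $\sqrt[N]{H}$ has a positive power in $H$ by definition.
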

\begin{proof}
	For the first claim, we refer the reader to \cite[2.5.8~Theorem]{Khukhro93}.
	
	The second follows from the previous lemma.
\end{proof}

\begin{prop}	\label{prop:NormalSubgroupInFGTFNilpotentGroupContainedInCentralSeries}
	Let \(N\) be a \fgtf nilpotent group. Let \(H\) be a normal subgroup such that \(N / H\) is torsion-free and let \(\phi \in \End(N)\) be such that \(\phi(H) \leq H\). Then there exists a \(\phi\)-ITF-series containing \(H\) as one of its subgroups.
	\end{prop}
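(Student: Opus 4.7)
The plan is to construct the desired series by refining at $H$ on both sides: below $H$, I would intersect the adapted lower central series of $N$ with $H$, and above $H$, I would pull back the adapted lower central series of the quotient $N/H$. Concatenating the two pieces at $H$ should give the required $\phi$-ITF-series.

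For the part below $H$, write $\gamma_i' := \sqrt[N]{\gamma_i(N)}$, which by \cref{lem:PropertiesALCS} and \cref{lem:TFNilpotentIffALCSTerminates} is a fully characteristic central series of $N$ terminating at $1$ in, say, $c+1$ steps, with torsion-free quotients. Put $H_i := H \cap \gamma_i'$. Each $H_i$ is normal in $N$ (intersection of a normal and a fully characteristic subgroup) and $\phi$-invariant since $\phi(H) \leq H$ and $\phi(\gamma_i') \leq \gamma_i'$. The natural map $H_i/H_{i+1} \to \gamma_i'/\gamma_{i+1}'$ induced by inclusion is injective, and its codomain embeds in the torsion-free group $N/\gamma_{i+1}'$; thus $H_i/H_{i+1}$ is torsion-free. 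Finally, $[N, H_i] \leq [N, \gamma_i'] \cap [N,H] \leq \gamma_{i+1}' \cap H = H_{i+1}$, so the series is central in $N$.

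For the part above $H$, note that the quotient $N/H$ is again finitely generated torsion-free nilpotent. Let $\bar\phi$ be the endomorphism induced by $\phi$ and let $\tilde\gamma_i' := \sqrt[N/H]{\gamma_i(N/H)}$ be its adapted lower central series, terminating at $1$ after $c'+1$ steps. Pull back to $N$ by setting $K_i := \pi^{-1}(\tilde\gamma_i')$, where $\pi: N \to N/H$ is the projection. Then $K_1 = N$, $K_{c'+1} = H$, each $K_i$ is normal in $N$ and $\phi$-invariant (since $\tilde\gamma_i'$ is fully characteristic in $N/H$), and $K_i/K_{i+1} \cong \tilde\gamma_i'/\tilde\gamma_{i+1}'$ is torsion-free and central in $N/K_{i+1}$. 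Gluing the two pieces at $H$ produces the required series.

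The only real obstacle is ensuring that the quotients above $H$ are torsion-free: a naive choice such as $H\gamma_i'$ would give quotients that need not be torsion-free. Routing through the adapted lower central series of $N/H$ bypasses this difficulty cleanly, because the hypothesis that $N/H$ is torsion-free guarantees that \cref{lem:PropertiesALCS} applies to $N/H$ directly. After removing repeated terms, the concatenated series is strict, and all four conditions ($\phi$-invariance, normality, torsion-free quotients, centrality) are inherited termwise from the two halves.
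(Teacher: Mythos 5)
Your proposal is correct and follows essentially the same strategy as the paper's proof: intersect the adapted lower central series of $N$ with $H$ for the lower half, pull back the adapted lower central series of $N/H$ for the upper half, and glue at $H$. The verifications of $\phi$-invariance, centrality, and torsion-freeness of the factors are the same second-isomorphism-theorem and \cref{lem:PropertiesALCS} arguments the paper uses (the paper simply avoids your ``remove repeated terms'' step by choosing the smallest index $j$ with $H \leq \sqrt[N]{\gamma_{c-j+1}(N)}$).
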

\begin{proof}
	Since \(N\) is torsion-free nilpotent, there is a \(c\) such that \(\sqrt[N]{\gamma_{c + 1}(N)} = 1\) by \cref{lem:TFNilpotentIffALCSTerminates}. Let \(j \geq 0\) be the smallest index such that \(H \leq \sqrt[N]{\gamma_{c - j + 1}(N)}\). For \(i \in \{0, \ldots, j\}\), put
	\[
		N_{i} := \sqrt[N]{\gamma_{c - i + 1}(N)} \cap H.
	\]
	Next, let \(p: N \to N / H\) be the natural projection. Since \(N / H\) is torsion-free nilpotent, there is a \(d\) such that \(\sqrt[N / H]{\gamma_{d + 1}(N / H)} = 1\), again by \cref{lem:TFNilpotentIffALCSTerminates}. For \(i \in \{0, \ldots, d\}\), put
	\[
		N_{i + j} := \inv{p} \left(\sqrt[N / H]{\gamma_{d - i + 1}(N/H)}\right).
	\]
	We prove that the \(N_{i}\) form the desired central series. Note that we have defined \(N_{j}\) in two ways. However, they coincide, as
	\[
		H \cap \sqrt[N]{\gamma_{c - j + 1}(N)} = H
	\]
	by definition of \(j\) and
	\[
		\inv{p} \left(\sqrt[N / H]{\gamma_{d - 0 + 1}(N/H)}\right) = \inv{p}(1) = H
	\]
	as well.
	
	Put \(k := j + d\). As the adapted lower central series is descending, the \(N_{i}\) are ascending. To prove that they are central (and normal), we compute \([N_{i}, N]\) for each \(i \in \{0, \ldots, k\}\). If \(i \leq j\), then \([N_{i}, N] \leq [H, N] \leq H\), as \(H\) is normal, and
	\[
		[N_{i}, N] \leq \left[\sqrt[N]{\gamma_{c - i + 1}(N)}, N\right] \leq \sqrt[N]{\gamma_{c - i + 2}(N)}.
	\]
	Thus, \([N_{i}, N] \leq N_{i - 1}\). If \(i \geq j + 1\), then write \(i = j + l\) for some \(l \in \{1, \ldots, d\}\). With this notation, we find
	\begin{align*}
		[N_{i}, N] = [N_{j + l}, N]	&=	\left[\inv{p} \left(\sqrt[N / H]{\gamma_{d - l + 1}(N/H)}\right), N \right]	\\
					&=	\left[\inv{p} \left(\sqrt[N / H]{\gamma_{d - l + 1}(N/H)}\right), \inv{p}(N/H)\right]	\\
					&=	\inv{p}\left(\left[\sqrt[N / H]{\gamma_{d - l + 1}(N/H)}, N / H\right]\right)	\\
					&\leq \inv{p} \left(\sqrt[N / H]{\gamma_{d - l + 2}(N/H)}\right)	\\
					&= N_{j + l - 1} = N_{i - 1}.
	\end{align*}
	
	Next, we argue that the factors \(N_{i} / N_{i - 1}\) are torsion-free for all \(i \in \{1, \ldots, k\}\). Let \(i \in \{1, \ldots, k\}\) and suppose that \(i \leq j\). Then
	\[
		\frac{N_{i}}{N_{i - 1}} = \frac{N_{i}}{\sqrt[N]{\gamma_{c - i}(N)} \cap N_{i}} \cong \frac{N_{i}\sqrt[N]{\gamma_{c - i}(N)}}{\sqrt[N]{\gamma_{c - i}(N)}} \leq \frac{N}{\sqrt[N]{\gamma_{c - i}(N)}},
	\]
	where the latter is torsion-free by \cref{lem:PropertiesALCS}\eqref{item:ALCSFactorsTorsionfree}. Now, suppose that \(i \geq j + 1\) and write \(i = j + l\) for some \(l \in \{1, \ldots, d\}\). Then by the third isomorphism theorem,
	\[
		\frac{N}{N_{i - 1}} \cong \frac{N / H}{\sqrt[N / H]{\gamma_{d - l + 2}(N / H)}}. 
	\]
	The latter group is torsion-free, again by \cref{lem:PropertiesALCS}\eqref{item:ALCSFactorsTorsionfree}, and as \(N_{i} / N_{i - 1}\) is a subgroup of \(N / N_{i - 1}\), it is torsion-free as well.
	
	Finally, we prove that \(\phi(N_{i}) \leq N_{i}\) for each \(i \in \{0, \ldots, k\}\). Let \(i \in \{0, \ldots, k\}\) be arbitrary. If \(i \leq j\), then
	\begin{align*}
		\phi(N_{i})	&= \phi\left(\sqrt[N]{\gamma_{c - i + 1}(N)} \cap H\right)	\\
					&\leq \phi\left(\sqrt[N]{\gamma_{c - i + 1}(N)}\right) \cap \phi(H) \\
					&\leq \sqrt[N]{\gamma_{c - i + 1}(N)} \cap H	\\
					&= N_{i},
	\end{align*}
	by assumption on \(H\) and by \cref{lem:PropertiesALCS}\eqref{item:ALCSFullyCharacteristic}.
	
	If \(i \geq j + 1\), write \(i = j + l\) for some \(l \in \{1, \ldots, d\}\). Let \(\bar{\phi}\) denote the induced endomorphism on \(N / H\). Since \(\sqrt[N / H]{\gamma_{d - l + 1}(N / H)}\) is fully characteristic,
	\[
		p(\phi(N_{j + l})) = \bar{\phi}(p(N_{j + l})) = \bar{\phi}\left(\sqrt[N / H]{\gamma_{d - l + 1}(N / H)}\right) \leq \sqrt[N / H]{\gamma_{d - l + 1}(N / H)}.
	\]

	This implies that \(\phi(N_{i}) \leq \inv{p}\left(\sqrt[N / H]{\gamma_{d - l + 1}(N / H)}\right) = N_{i}\).
\end{proof}

\begin{prop}	\label{prop:productFormulaOneSubgroupFGTFNilpotentGroups}
	Let \(N\) be a \fgtf nilpotent group. Let \(H\) be a normal subgroup such that \(N / H\) is torsion-free and suppose that \(\phi \in \End(N)\) is such that \(\phi(H) \leq H\). Let \(\restr{\phi}{H}\) and \(\bar{\phi}\) denote the induced endomorphisms on \(H\) and \(N / H\), respectively. Then \(R(\phi) = R(\restr{\phi}{H})R(\bar{\phi})\).
	
	In particular, if \(H\) is characteristic and has the \(\Rinf\)-property, then \(N\) has the \(\Rinf\)-property as well.
\end{prop}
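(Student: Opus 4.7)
The plan is to apply the product formula (\cref{theo:productFormulaFGTFNilpotentGroups}) three times along a single well-chosen central series. First I would invoke \cref{prop:NormalSubgroupInFGTFNilpotentGroupContainedInCentralSeries} to obtain a $\phi$-ITF-series
\[
    1 = N_{0} \lhd N_{1} \lhd \ldots \lhd N_{j} = H \lhd N_{j + 1} \lhd \ldots \lhd N_{k} = N
\]
of $N$ that passes through $H$, and let $\phi_{i}$ denote the induced endomorphism on $N_{i} / N_{i - 1}$ for each $i \in \{1, \ldots, k\}$.

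The key observation is that this single series simultaneously produces compatible series for $H$ and $N / H$. The initial segment $1 = N_{0} \lhd \ldots \lhd N_{j} = H$ is a $\restr{\phi}{H}$-ITF-series of $H$: each $N_{i}$ (with $i \leq j$) is central in $N$ and hence in $H$, the factors $N_{i} / N_{i - 1}$ are torsion-free by construction, and $\restr{\phi}{H}$ preserves each $N_{i}$ because $\phi$ does. By the third isomorphism theorem, the quotient tail
\[
    1 = \frac{H}{H} \lhd \frac{N_{j + 1}}{H} \lhd \ldots \lhd \frac{N_{k}}{H} = \frac{N}{H}
\]
is a $\bar{\phi}$-ITF-series whose successive factors are isomorphic, as groups equipped with an induced endomorphism, to $N_{j + l} / N_{j + l - 1}$ for $l \in \{1, \ldots, k - j\}$. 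Applying \cref{theo:productFormulaFGTFNilpotentGroups} to all three series then gives
\[
    R(\restr{\phi}{H}) = \prod_{i = 1}^{j} R(\phi_{i}), \qquad R(\bar{\phi}) = \prod_{i = j + 1}^{k} R(\phi_{i}), \qquad R(\phi) = \prod_{i = 1}^{k} R(\phi_{i}),
\]
and multiplying the first two expressions yields the desired identity $R(\phi) = R(\restr{\phi}{H}) R(\bar{\phi})$.

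For the \emph{in particular} clause, if $H$ is characteristic in $N$, then every $\phi \in \Aut(N)$ satisfies $\phi(H) = H$, so $\restr{\phi}{H} \in \Aut(H)$; the $\Rinf$-property of $H$ then forces $R(\restr{\phi}{H}) = \infty$, and since $R(\bar{\phi}) \geq 1$, the displayed formula immediately gives $R(\phi) = \infty$. I do not anticipate any genuine obstacle here, as all of the technical work has been absorbed into \cref{prop:NormalSubgroupInFGTFNilpotentGroupContainedInCentralSeries}; the only mildly delicate point is verifying that the restricted and quotiented segments inherit the three ITF-conditions, which is essentially tautological once one notes that the subgroups $N_{i}$ with $i \leq j$ lie inside $H$ while those with $i \geq j$ contain $H$.
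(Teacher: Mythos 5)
Your proof is correct and follows essentially the same route as the paper: both invoke \cref{prop:NormalSubgroupInFGTFNilpotentGroupContainedInCentralSeries} to obtain a \(\phi\)-ITF-series passing through \(H\), split the resulting product formula at \(H\), identify the initial segment as a \(\restr{\phi}{H}\)-ITF-series of \(H\), and identify the tail (passed to the quotient via the third isomorphism theorem) as a \(\bar{\phi}\)-ITF-series of \(N/H\). You are slightly more explicit than the paper about why the quotient tail satisfies the ITF-conditions, but this is a detail the paper delegates to the phrase ``analogously as in the proof of the product formula,'' so the two arguments coincide in substance.
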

\begin{proof}
	By \cref{prop:NormalSubgroupInFGTFNilpotentGroupContainedInCentralSeries}, there is a \(\phi\)-ITF-series \(1 = H_{0} \lhd H_{1} \lhd \ldots \lhd H_{c - 1} \lhd H_{c} = N\) containing \(H\) as one of its groups, say, \(H = H_{j}\). The product formula yields
	\begin{equation}	\label{eq:productFormulaSplitUp}
		R(\phi) = \prod_{i = 1}^{j} R(\phi_{i}) \prod_{i = j + 1}^{c} R(\phi_{i}),
	\end{equation}
	where \(\phi_{i}\) is the induced endomorphism on \(H_{i} / H_{i - 1}\). As \(1 \lhd H_{1} \lhd \ldots H_{i - 1} \lhd H_{i} = H\) is a \(\restr{\phi}{H}\)-ITF-series of \(H\), the first product in \eqref{eq:productFormulaSplitUp} equals \(R(\restr{\phi}{H})\) by \cref{theo:productFormulaFGTFNilpotentGroups}. Analogously as in the proof of \cref{theo:productFormulaFGTFNilpotentGroups}, one can argue that
	\[
		\prod_{i = j + 1}^{c} R(\phi_{i}) = R(\bar{\phi}).
	\]
	Therefore, the result follows.
	
	For the claim about the \(\Rinf\)-property, note that every automorphism of \(N\) restricts to one of \(H\) if the latter is characteristic. So, \(R(\restr{\phi}{H}) = \infty\) for every \(\phi \in \Aut(N)\). Consequently, \(R(\phi) = \infty\) for every \(\phi \in \Aut(N)\).
\end{proof}

We can drop three conditions and still show that \(\phi \in \End(N)\) has infinite Reidemeister number if its restriction to \(H\) does.
We start by dropping the condition that \(N / H\) is torsion-free.

\begin{prop}	\label{prop:InfiniteReidemeisterNumberOnNormalSubgroupImpliesInfiniteReidemeisterNumber}
	Let \(N\) be a \fgtf nilpotent group, \(H\) a normal subgroup, and \(\phi \in \End(N)\) such that \(\phi(H) \leq H\). Suppose that \(R(\restr{\phi}{H}) = \infty\). Then \(R(\phi) = \infty\) as well.
\end{prop}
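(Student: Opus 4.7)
The plan is to convert the hypothesis $R(\restr{\phi}{H}) = \infty$ into the statement that some factor in a $\phi$-ITF-series of $N$ has infinite Reidemeister number, and then invoke \cref{theo:productFormulaFGTFNilpotentGroups}. Because \cref{prop:productFormulaOneSubgroupFGTFNilpotentGroups} requires $N/H$ to be torsion-free, it cannot be applied directly; instead I propose to start from a canonical $\phi$-ITF-series of $N$ --- the adapted lower central series --- and intersect it with $H$.

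Concretely, let $c$ be the nilpotency class of $N$ and set $\Gamma_j := \sqrt[N]{\gamma_{c-j+1}(N)}$ for $j \in \{0, \ldots, c\}$. By \cref{lem:TFNilpotentIffALCSTerminates} and \cref{lem:PropertiesALCS}, the chain $1 = \Gamma_0 \lhd \dots \lhd \Gamma_c = N$ is a $\phi$-ITF-series of $N$ with free abelian quotients. Define $H_j := H \cap \Gamma_j$; the main routine check is that $\{H_j\}_{j=0}^{c}$ is a $\restr{\phi}{H}$-ITF-series of $H$. The $\phi$-invariance and the normality of $H_{j-1}$ in $H_j$ follow from those of $\Gamma_{j-1} \lhd N$ together with the fact that $H$ is normal and $\phi$-invariant. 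The centrality condition $[H_j, H] \leq H_{j-1}$ reduces to $[\Gamma_j, N] \leq \Gamma_{j-1}$, which follows from \cref{lem:PropertiesALCS}(3) with $l = 1$ (using $\sqrt[N]{\gamma_1(N)} = N$). Torsion-freeness of $H_j / H_{j-1}$ is inherited from $\Gamma_j / \Gamma_{j-1}$ through the natural $\phi$-equivariant injection $H_j / H_{j-1} \hookrightarrow \Gamma_j / \Gamma_{j-1}$, where injectivity uses $H \cap \Gamma_j \cap \Gamma_{j-1} = H \cap \Gamma_{j-1} = H_{j-1}$.

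With the two ITF-series in hand, denote by $\phi_j$ and $\Phi_j$ the endomorphisms induced by $\phi$ on $H_j / H_{j-1}$ and $\Gamma_j / \Gamma_{j-1}$, respectively. Applying \cref{theo:productFormulaFGTFNilpotentGroups} to $H$ with the series $\{H_j\}$ forces $R(\phi_{j_0}) = \infty$ for some $j_0$. By \cref{prop:ReidemeisterNumbersFreeAbelianGroups}, $\phi_{j_0}$ then has a nontrivial fixed point; the $\phi$-equivariant embedding transports this to a nontrivial fixed point of $\Phi_{j_0}$, and applying \cref{prop:ReidemeisterNumbersFreeAbelianGroups} to $\Gamma_{j_0} / \Gamma_{j_0-1}$ yields $R(\Phi_{j_0}) = \infty$. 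A final application of \cref{theo:productFormulaFGTFNilpotentGroups} to $N$ with $\{\Gamma_j\}$ then gives $R(\phi) = \infty$. I foresee no genuine obstacle; the only substantive work is the verification that $\{H_j\}$ is a $\restr{\phi}{H}$-ITF-series, and this is an essentially mechanical consequence of \cref{lem:PropertiesALCS}.
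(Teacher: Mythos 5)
Your proof is correct, but it takes a genuinely different route from the paper.

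The paper's proof works with the isolator $M := \sqrt[N]{H}$: it shows $M$ is a $\phi$-invariant normal subgroup with $N/M$ torsion-free, notes that $H$ has finite index in $M$ (\cref{prop:IsolatorOfSubgroupIsSubgroupNilpotentGroups}), and invokes a finite-index comparison lemma from the literature to deduce $R(\restr{\phi}{M}) = \infty$ from $R(\restr{\phi}{H}) = \infty$; then \cref{prop:productFormulaOneSubgroupFGTFNilpotentGroups} applied to $M$ finishes the argument. Your route bypasses the isolator entirely: you build a $\restr{\phi}{H}$-ITF-series for $H$ by intersecting $H$ with the adapted lower central series of $N$, apply the product formula once to $H$ to locate a factor with infinite Reidemeister number, transport the resulting nontrivial fixed point through the natural $\phi$-equivariant embeddings $H_j/H_{j-1} \hookrightarrow \Gamma_j/\Gamma_{j-1}$, and apply the product formula a second time to $N$. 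Your verification that $\{H_j\}$ is an ITF-series is sound, and the injectivity of the embedding (needed both for torsion-freeness of $H_j/H_{j-1}$ and for nontriviality of the transported fixed point) is correctly justified by $H_j \cap \Gamma_{j-1} = H_{j-1}$. What each approach buys: the paper's proof is shorter, at the cost of outsourcing the finite-index step to an external lemma; yours is more self-contained and makes explicit where the infinite Reidemeister number of $N$ actually lives on the abelian factor groups, which is a nice structural complement. In fact, your argument closely parallels the paper's later proof of \cref{prop:InfiniteReidemeisterNumberImpliesInfiniteStabilisersFGTFNilpotent}, which also lifts fixed points through the factors of the adapted lower central series.
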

\begin{proof}
	We aim to apply \cref{prop:productFormulaOneSubgroupFGTFNilpotentGroups}. However, the quotient \(N / H\) is not necessarily torsion-free. To amend this, we will use \(M := \sqrt[N]{H}\). By \cref{prop:IsolatorOfSubgroupIsSubgroupNilpotentGroups}, \(M\) is a subgroup and by \cref{lem:IsolatorOfPhiInvariantIsPhiInvariant}, \(\phi(M) \leq M\). We argue that \(M\) is normal and that \(N / M\) is torsion-free.
	
	First, let \(n \in N\) and \(m \in M\). Then there is a \(k \geq 1\) such that \(m^{k} \in H\). Since \(H\) is normal, \((\inv{n} mn)^k = \inv{n} m^{k} n \in H\). Therefore, \(\inv{n}mn \in M\), which shows that \(M\) is normal.

Next, suppose that \(n \in N\) is such that \((nM)^{d} = M\) for some \(d \geq 1\). Then \(n^{d} \in M\), which means that \((n^{d})^{k} \in H\) for some \(k \geq 1\). Consequently, \(n^{dk} \in H\), which implies that \(n \in M\). Hence, \(nM = M\); in other words, \(N / M\) is torsion-free.

Finally, we show that \(R(\phi) = \infty\). The induced automorphism \(\restr{\phi}{H}\) has infinite Reidemeister number, by assumption, and \(H\) has finite index in \(M\) by \cref{prop:IsolatorOfSubgroupIsSubgroupNilpotentGroups}. By \cite[Lemma~1.2(4)]{GoncalvesWong06}, the combination of both implies that \(R(\restr{\phi}{M}) = \infty\) as well. It then follows from \cref{prop:productFormulaOneSubgroupFGTFNilpotentGroups} that \(R(\phi) = R(\restr{\phi}{M}) R(\bar{\phi})\), where \(\bar{\phi}\) is the induced automorphism on \(N / M\). Therefore, \(R(\phi) = \infty\).
\end{proof}

Next, we drop the normality condition.

\begin{prop}	\label{prop:InfiniteReidemeisterNumberOnSubgroupFGTFNilpotentGroup}
	Let \(N\) be a \fgtf nilpotent group and \(\phi \in \End(N)\). Suppose that \(H\) is a subgroup such that \(\phi(H) \leq H\) and such that \(R(\restr{\phi}{H}) = \infty\). Then \(R(\phi) = \infty\) as well.
\end{prop}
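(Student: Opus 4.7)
My plan is to induct on the nilpotency class $c$ of $N$ and reduce to the normal-subgroup case already established in \cref{prop:InfiniteReidemeisterNumberOnNormalSubgroupImpliesInfiniteReidemeisterNumber}. The base case $c = 1$ is immediate: an abelian group has every subgroup normal, so \cref{prop:InfiniteReidemeisterNumberOnNormalSubgroupImpliesInfiniteReidemeisterNumber} applies to $H$ verbatim.

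For the inductive step I would pick a $\phi$-invariant central subgroup $Z$ of $N$ whose quotient $N/Z$ is torsion-free and of strictly smaller nilpotency class. The natural candidate is $Z := \sqrt[N]{\gamma_{c}(N)}$: by \cref{lem:PropertiesALCS} it is fully characteristic, $N/Z$ is torsion-free, and the relation $[Z, N] \leq \sqrt[N]{\gamma_{c+1}(N)} = 1$ shows that $Z$ is central and that $N/Z$ has class at most $c-1$. Since $Z$ is central, $H$ is normal in the subgroup $HZ$, both $H$ and $HZ$ are $\phi$-invariant, and $HZ$ is itself a finitely generated torsion-free nilpotent group. Applying \cref{prop:InfiniteReidemeisterNumberOnNormalSubgroupImpliesInfiniteReidemeisterNumber} inside the ambient group $HZ$ to its normal subgroup $H$ then yields $R(\restr{\phi}{HZ}) = \infty$.

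To finish, I would feed this into the central-extension product formula. The quotient $HZ/Z$ is a subgroup of $N/Z$, hence finitely generated torsion-free nilpotent, in particular residually finite and of type $(F)$, so \cref{prop:productFormulaCentralExtension} applied to the central extension $1 \to Z \to HZ \to HZ/Z \to 1$ gives
\[
	R(\restr{\phi}{HZ}) = R(\restr{\phi}{Z}) \cdot R(\restr{\bar\phi}{HZ/Z}),
\]
where $\bar\phi$ is the endomorphism induced on $N/Z$. Since the left-hand side is infinite, at least one factor on the right is infinite. If $R(\restr{\phi}{Z}) = \infty$, then $Z$ is normal in $N$ and \cref{prop:InfiniteReidemeisterNumberOnNormalSubgroupImpliesInfiniteReidemeisterNumber} gives $R(\phi) = \infty$; otherwise $R(\restr{\bar\phi}{HZ/Z}) = \infty$, and the induction hypothesis applied inside $N/Z$ yields $R(\bar\phi) = \infty$, which by \cref{lem:LowerBoundReidemeisterNumberByQuotient} forces $R(\phi) = \infty$ as well. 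The only conceptual subtlety is the idea of enlarging the non-normal $H$ to $HZ$: this makes $H$ normal in the enlargement while keeping the ambient nilpotency class reducible once we pass to $N/Z$, and I do not anticipate any further obstacle.
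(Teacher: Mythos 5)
Your proof is correct, but it takes a genuinely different route from the paper. The paper's proof is a direct argument with no induction on class and no use of the central-extension product formula: it defines $H_i := H\gamma_{c-i+1}(N)$ for $i \in \{0, \ldots, c\}$, verifies by a short commutator computation that $H = H_0 \lhd H_1 \lhd \cdots \lhd H_c = N$ is a subnormal series of $\phi$-invariant subgroups, and then simply applies \cref{prop:InfiniteReidemeisterNumberOnNormalSubgroupImpliesInfiniteReidemeisterNumber} at each link of the chain to propagate $R(\restr{\phi}{H_i}) = \infty$ up to $R(\phi) = \infty$. Your argument replaces that single sweep with an induction on the nilpotency class, enlarges $H$ by the isolated term $Z = \sqrt[N]{\gamma_c(N)}$ rather than by $\gamma_c(N)$ itself, and then invokes \cref{prop:productFormulaCentralExtension} on the central extension $1 \to Z \to HZ \to HZ/Z \to 1$ together with a case split on which factor is infinite. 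Both approaches are sound; your use of the isolator is forced precisely because \cref{prop:productFormulaCentralExtension} needs the quotient to be torsion-free, whereas the paper's reliance solely on \cref{prop:InfiniteReidemeisterNumberOnNormalSubgroupImpliesInfiniteReidemeisterNumber} (which has no torsion-freeness hypothesis on the quotient) lets it work with $\gamma_c(N)$ directly and avoid both the product formula and the case analysis. In short, the paper's route is leaner -- one auxiliary result applied $c$ times -- while yours trades that for an induction plus a heavier toolkit, but arrives at the same place correctly.
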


\begin{proof}
	Let \(c\) be the nilpotency class of \(N\) and write \(C_{i} = \gamma_{c - i + 1}(N)\) for each \(i \in \{0, \ldots, c\}\).
	Define the subgroup \(H_{i} := HC_{i}\) for each \(i \in \{0, \ldots, c\}\).
	We first argue that \(H_{i}\) is normal in \(H_{i + 1}\) for each \(i \in \{0, \ldots, c - 1\}\).
		
	Let \(i \in \{0, \ldots, i\}\), \(h_{i} \in H_{i}\) and \(h_{i + 1} \in H_{i + 1}\) be arbitrary.
	We prove that \([h_{i}, h_{i + 1}] \in H_{i}\).
	Write \(h_{i} = h z\) and \(h_{i + 1} = h' z'\) for some \(h, h' \in H\), \(z \in C_{i}\) and \(z' \in C_{i + 1}\).
	Then
	\begin{align*}
		[h_{i}, h_{i + 1}]	&=	[hz, h'z']	\\
					&=	[hz, z'][hz, h']^{z'}	\\
					&= 	[hz, z'][h, h']^{zz'} [z, h']^{z'}	\\
					&=	[hz, z'][h, h'] [[h, h'],zz'] [z, h']^{z'}	\\
					&\in 	[N, C_{i + 1}] H [H, C_{i + 1}] [C_{i}, N]^{N}			\\
					& \subseteq C_{i} H C_{i} C_{i}	\\
					& \subseteq HC_{i} = H_{i}
	\end{align*}
	since \([C_{k}, N] \leq C_{k - 1} \lhd N\) for all \(k \geq 1\).
	
	So, we have the subnormal series
	\[
		H = H_{0} \lhd H_{1} \lhd \ldots \lhd H_{c} = N.
	\]
	Since \(\phi(H) \leq H\) by assumption and \(\phi(\gamma_{k}(N)) \leq \gamma_{k}(N)\) for all \(k \geq 1\), also \(\phi(H_{i}) \leq H_{i}\) for each \(i \in \{0, \ldots, c\}\).
	Therefore, for \(i \in \{0, \ldots, c\}\), let \(\phi_{i}\) be the induced endomorphism on \(H_{i}\).

	Since \(R(\phi_{0}) = R(\restr{\phi}{H}) = \infty\), \cref{prop:InfiniteReidemeisterNumberOnNormalSubgroupImpliesInfiniteReidemeisterNumber} implies that \(R(\phi_{1}) = \infty\) as well, as \(H_{0}\) is normal in \(H_{1}\). By induction, \(R(\phi_{i}) = \infty\) for each \(i \geq 0\). In particular, \(R(\phi) = R(\phi_{c}) = \infty\).
\end{proof}

Finally, we drop the condition that \(N\) is torsion-free.
\begin{theorem}	\label{theo:InfiniteReidemeisterNumberOnSubgroupFGNilpotentGroup}
	Let \(N\) be a \fg nilpotent group and \(\phi \in \End(N)\). Suppose that \(H\) is a subgroup such that \(\phi(H) \leq H\) and such that \(R(\restr{\phi}{H}) = \infty\). Then \(R(\phi) = \infty\) as well.
\end{theorem}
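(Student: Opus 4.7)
The strategy is to reduce to the torsion-free setting handled by \cref{prop:InfiniteReidemeisterNumberOnSubgroupFGTFNilpotentGroup} by quotienting out the torsion. In any \fg nilpotent group $N$, the torsion elements form a finite characteristic subgroup $T$; in particular $\phi(T) \leq T$, and $\bar{N} := N/T$ is \fgtf nilpotent. Let $\bar{\phi} \in \End(\bar{N})$ denote the induced endomorphism and $\bar{H} := HT/T$, a subgroup of $\bar{N}$ preserved by $\bar{\phi}$ and canonically isomorphic to $H/(H \cap T)$ by the second isomorphism theorem.

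The main step is to show that $R(\restr{\bar{\phi}}{\bar{H}}) = \infty$. Set $K := H \cap T$; this is a finite normal subgroup of $H$ (normal because $T \normal N$) satisfying $\phi(K) \leq K$, and the isomorphism $H/K \cong \bar{H}$ identifies the endomorphism induced by $\restr{\phi}{H}$ on $H/K$ with $\restr{\bar{\phi}}{\bar{H}}$. Applying \cref{lem:GeneralAdditionFormula} to $\restr{\phi}{H} \in \End(H)$ with respect to the normal subgroup $K$ expresses $R(\restr{\phi}{H})$ as a sum of $R(\restr{\bar{\phi}}{\bar{H}})$ terms of the form $\size{\im \ihat_h}$, each bounded by the cardinality of the domain $\Reid[\restrb{\inn{h} \circ \restr{\phi}{H}}{K}]$, hence by $\size{K}$. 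Consequently,
\[
	\infty = R(\restr{\phi}{H}) \leq \size{K} \cdot R(\restr{\bar{\phi}}{\bar{H}}),
\]
and since $\size{K} < \infty$, this forces $R(\restr{\bar{\phi}}{\bar{H}}) = \infty$.

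Applying \cref{prop:InfiniteReidemeisterNumberOnSubgroupFGTFNilpotentGroup} to $\bar{N}$, $\bar{\phi}$, and $\bar{H}$ then gives $R(\bar{\phi}) = \infty$, and \cref{lem:LowerBoundReidemeisterNumberByQuotient} yields $R(\phi) \geq R(\bar{\phi}) = \infty$. The only non-routine step is the passage from $R(\restr{\phi}{H}) = \infty$ to $R(\restr{\bar{\phi}}{\bar{H}}) = \infty$: a naive quotient lower bound would go in the wrong direction, but finiteness of the kernel $K$ of $H \to \bar{H}$ allows \cref{lem:GeneralAdditionFormula} to reverse the inequality at the cost of a factor of $\size{K}$.
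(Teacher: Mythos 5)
Your proof is correct, and the overall skeleton is identical to the paper's: quotient by the finite torsion subgroup $T$, transfer the infinite Reidemeister number from $H$ to $HT/T$, apply \cref{prop:InfiniteReidemeisterNumberOnSubgroupFGTFNilpotentGroup} to conclude $R(\bar{\phi}) = \infty$, and lift back by \cref{lem:LowerBoundReidemeisterNumberByQuotient}. The one place you deviate is the intermediate step $R(\restr{\phi}{H}) = \infty \Rightarrow R(\restr{\bar{\phi}}{HT/T}) = \infty$: the paper proves this ``by hand'' by exhibiting an explicit surjection $f\colon \Reid[\restr{\phi}{H}] \to \Reid[\restr{\bar{\phi}}{HT/T}]$, $[h] \mapsto [hT]$, and arguing directly that its fibres have size at most $\size{T}$. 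You instead identify $HT/T \cong H/K$ with $K = H\cap T$ and invoke \cref{lem:GeneralAdditionFormula} for the extension $1 \to K \to H \to H/K \to 1$, bounding each summand by $\size{K}$ to obtain $R(\restr{\phi}{H}) \leq \size{K}\cdot R(\restr{\bar{\phi}}{\bar{H}})$. Both arguments encode the same finiteness-of-fibres phenomenon; yours has the advantage of reusing a lemma already present in the paper rather than re-deriving a finite-fibre surjection from scratch, at the cost of routing through the second isomorphism theorem to set up the right short exact sequence. Either way the key ingredient is $\size{T} < \infty$, which is exactly where finite generation of $N$ enters.
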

\begin{proof}
	Let \(T := \tau(N)\) be the torsion-subgroup of \(N\).
	Since \(\phi(T) \leq T\), \(\phi\) induces an endomorphism \(\bar{\phi}\) on \(N / T\).
	We prove that \(R(\bar{\phi}) = \infty\), for then \cref{lem:LowerBoundReidemeisterNumberByQuotient} implies that \(R(\phi) = \infty\) as well.
	
	Consider the subgroup \(HT / T\) of \(N / T\), which satisfies \(\bar{\phi}(HT / T) \leq HT / T\) by assumption on \(H\).
	Let \(\restr{\bar{\phi}}{HT / T}\) denote the induced endomorphism on \(HT / T\).
	We claim that the map
	\[
		f: \Reid[\restr{\phi}{H}] \to \Reid[\restr{\bar{\phi}}{HT / T}]: [h]_{\phi} \mapsto [hT]_{\restr{\bar{\phi}}{HT / T}}
	\]
	is a well-defined surjection with finite fibres.
	
	Suppose that \(h = g h' \inv{\phi}(g)\) for some \(h, h', g \in H\). Projecting down to \(N / T\), we get
	\[
		hT = gh' \inv{\bar{\phi}(g)}T = gh'\inv{\restr{\bar{\phi}}{HT / T}(g)}T.
	\]
	This proves that \([hT]_{\restr{\bar{\phi}}{HT / T}} = [h'T]_{\restr{\bar{\phi}}{HT / T}}\).
	Since each element in \(HT / T\) is of the form \(hT\) for some \(h \in H\), \(f\) is also surjective.
	
	Lastly, to prove that \(f\) has finite fibres, let \(h_{0} \in H\) be fixed and suppose that \([hT]_{\restr{\bar{\phi}}{HT / T}} = [h_{0}T]_{\restr{\bar{\phi}}{HT / T}}\) for some \(h \in H\).
	Then there is some \(g \in H\) such that
	\[
		hT = gh_{0}\inv{\restr{\bar{\phi}}{HT / T}(g)}T.
	\]
	In \(N\), this means that there is some \(t \in T\) such that
	\[
		h = gh_{0}t \inv{\restr{\phi}{H}(g)}.
	\]
	In other words, \([h]_{\restr{\phi}{H}} = [h_{0}t]_{\restr{\phi}{H}}\) for some \(t \in T\).
	Thus,
	\[
		\inv{f}([h_{0}T]_{\restr{\bar{\phi}}{HT / T}}) \subseteq \{[h_{0}t]_{\restr{\phi}{H}} \mid t \in T\}.
	\]
	Since \(N\) is \fg, \(T\) is finite.
	Hence, the set on the right-hand side above is also finite, which proves that \(f\) has finite fibres.
	As \(\Reid[\restr{\phi}{H}]\) is infinite, this implies that the image of \(f\), namely \(\Reid[\restr{\bar{\phi}}{HT / T}]\), is also infinite.
	
	Thus, \(R(\restr{\bar{\phi}}{HT / T}) = \infty\).
	Now, the group \(N / T\) is \fgtf nilpotent and \(HT / T\) is a \(\phi\)-invariant subgroup with \(R(\restr{\bar{\phi}}{HT / T}) = \infty\).
	\cref{prop:InfiniteReidemeisterNumberOnSubgroupFGTFNilpotentGroup} then implies that \(R(\bar{\phi}) = \infty\) as well.
	This finishes the proof.
\end{proof}
Note that this is a particularly strong tool to `lift' an infinite Reidemeister number from a subgroup to the whole group:
the only condition we impose on \(H\) is that it is \(\phi\)-invariant, which is practically unavoidable to speak of the Reidemeister number of \(\restr{\phi}{H}\) on \(H\).

\subsection{Application 2: twisted stabilisers in nilpotent groups}

Since \fgtf nilpotent groups are residually finite and of type \((F)\), \cref{prop:infiniteTwistedStabilisersImpliesInfiniteReidemeisterNumber} holds for them. The converse holds as well for them, in an even stronger form, which is the content of \cref{prop:InfiniteReidemeisterNumberImpliesInfiniteStabilisersFGTFNilpotent}.

\begin{lemma}	\label{lem:LiftingFixedPointsCentralQuotient}
	Let \(N\) be a \fg nilpotent group and let \(C\) be a central subgroup such that \(N / C\) is \tf. Let \(\phi \in \End(N)\) be such that \(\phi(C) \leq C\) and let \(\bar{\phi}\) denote the induced endomorphism on \(N / C\). If \(\bar{\phi}\) has a non-trivial fixed point, then so does \(\phi\).
\end{lemma}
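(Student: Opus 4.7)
I plan to lift a non-trivial fixed point $xC \in N/C$ of $\bar\phi$ to some $x \in N$, so that $\phi(x) = xc$ for a (unique) $c \in C$, and then work inside the subgroup $H := \grpgen{C, x}$. Since $C$ is central in $N$, the group $H$ is abelian; since $N$ is finitely generated nilpotent, $H$ is also finitely generated. As $N/C$ is torsion-free and $xC$ is non-trivial, the coset $xC$ has infinite order in $N/C$, so $H/C$ is infinite cyclic and every element of $H$ can be written uniquely as $x^{m}c'$ with $m \in \Z$ and $c' \in C$. Writing $\psi := \restr{\phi}{C}$ and exploiting that $H$ is abelian,
\[
	\phi(x^{m}c') = (xc)^{m}\psi(c') = x^{m}c^{m}\psi(c'),
\]
so $x^{m}c'$ is fixed by $\phi$ if and only if $c^{m} = \delta(c')$, where $\delta: C \to C$ is the homomorphism $\delta(y) := y\inv{\psi(y)}$.

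The argument then splits into two cases. If $\Fix(\psi)$ is non-trivial, any non-identity element of $\Fix(\psi)$ is already a non-trivial fixed point of $\phi$, so I may assume $\ker \delta = \Fix(\psi) = 1$; in other words, $\delta$ is injective. Since $C$ is finitely generated abelian, tensoring the injection $C \xrightarrow{\delta} C$ with $\Q$ yields an injective, and therefore bijective, $\Q$-linear endomorphism of the finite-dimensional vector space $C \otimes \Q$. Consequently $\coker(\delta) \otimes \Q = 0$, so $\coker(\delta)$ is a torsion abelian group; in particular, some $m \in \Z_{>0}$ satisfies $c^{m} \in \im \delta$. Writing $c^{m} = \delta(c')$, the element $x^{m}c'$ is a fixed point of $\phi$, and its image $(xC)^{m}$ in $H/C$ is non-trivial, so $x^{m}c'$ itself is non-trivial.

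The one delicate step is producing a solution of $c^{m} = \delta(c')$ with $m \neq 0$ once $\Fix(\psi)$ is trivial; this is precisely where finite generation of $N$ (inherited by $C$) is essential, as it enables the reduction to a rank argument over $\Q$. Everything else is an explicit computation inside the finitely generated abelian group $H$.
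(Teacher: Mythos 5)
Your argument is correct and follows essentially the same route as the paper: lift the fixed coset to some \(x\) with \(\phi(x) = xc\), \(c \in C\), then find a power \(m > 0\) and an element \(c' \in C\) with \(c^{m} = c'\inv{\restr{\phi}{C}(c')}\), so that \(x^{m}c'\) is a fixed point of \(\phi\), non-trivial because \(N/C\) is torsion-free. The only difference is how the power \(m\) is produced: the paper splits on whether \(R(\restr{\phi}{C})\) is finite and uses the group structure on \(\Reid[\restr{\phi}{C}]\), while you split on whether \(\Fix(\restr{\phi}{C})\) is trivial and, in the injective case, deduce that \(\coker(\Id_{C} - \restr{\phi}{C})\) is torsion by tensoring with \(\Q\) --- two faces of the same rank argument on the finitely generated abelian group \(C\).
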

\begin{proof}
	Suppose \(\bar{\phi}(nC) = nC\) for some \(n \notin C\). Let \(\restr{\phi}{C}\) denote the restriction of \(\phi\) to \(C\). If \(R(\restr{\phi}{C}) = \infty\), then \(\restr{\phi}{C}\) has a non-trivial fixed point by \cref{prop:ReidemeisterNumbersFreeAbelianGroups}. Hence, in that case, \(\phi\) has one as well.
	
	So, suppose that \(R(\restr{\phi}{C}) < \infty\). Since \(\bar{\phi}(nC) = nC\), there is a \(c \in C\) such that \(\phi(n) = nc\). As \(C\) is central, \(\phi(n^{m}) = n^{m} c^{m}\) for all \(m \geq 1\). Since \(\Reid[\restr{\phi}{C}]\) has a group structure by \cref{prop:GroupStructureReidemeisterClassesAbelianGroups} and \(R(\restr{\phi}{C}) < \infty\), it is a finite group. Thus, there is a \(k \geq 1\) such that \(c^{k} \Rconj{\restr{\phi}{C}} 1\), say, \(c^{k} = c_{k} \inv{\phi(c_{k})}\) for some \(c_{k} \in C\). We then get
	\[
		c_{k} \inv{\phi(c_{k})} = c^{k} = n^{-k} \phi(n^{k}),
	\]
	from which we derive that \(\phi(n^{k}c_{k}) = n^{k} c_{k}\). If \(n^{k} c_{k} \in C\), then
	\[
		(nC)^{k} = n^{k}C = n^{k}c_{k}C = C.
	\]
	Since \(N / C\) is \tf, it follows that \(nC = C\), which contradicts the initial assumption on \(n\). Therefore, \(n^{k}c_{k}\) does not lie in \(C\), which in particular shows that \(n^{k} c_{k} \ne 1\). Therefore, \(\phi\) has a non-trivial fixed point.
\end{proof}
\begin{prop}	\label{prop:InfiniteReidemeisterNumberImpliesInfiniteStabilisersFGTFNilpotent}
	Let \(N\) be a \fgtf nilpotent group. Let \(\phi \in \End(N)\) and suppose that \(R(\phi) = \infty\). Then \(\Stab_{\phi}(n)\) is infinite for all \(n \in N\).
\end{prop}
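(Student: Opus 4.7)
The plan is to reduce to a statement about ordinary fixed-point subgroups and then induct on the nilpotency class. By \cref{lem:PropertiesTwistedConjugacyCompositionWithInnerAutomorphism}, \(\Stab_{\phi}(n) = \Fix(\inn{n} \circ \phi)\) and \(R(\inn{n} \circ \phi) = R(\phi) = \infty\). Replacing \(\phi\) by \(\inn{n} \circ \phi\), the task reduces to showing: whenever \(\psi \in \End(N)\) satisfies \(R(\psi) = \infty\), the subgroup \(\Fix(\psi)\) is infinite. Since \(N\) is torsion-free, any non-trivial element of \(\Fix(\psi)\) generates an infinite cyclic subgroup of \(\Fix(\psi)\), so it in fact suffices to exhibit a single non-trivial fixed point of \(\psi\).

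I proceed by induction on the nilpotency class \(c\) of \(N\). For \(c = 1\), the group \(N\) is \fgtf abelian, and \cref{prop:ReidemeisterNumbersFreeAbelianGroups} says that \(R(\psi) = \infty\) is equivalent to \(\psi\) having a non-trivial fixed point, which is exactly the required conclusion.

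For the inductive step, set \(C := \sqrt[N]{\gamma_{c}(N)}\). Applying \cref{lem:PropertiesALCS}(3) with \(k = c\) and \(l = 1\), together with \(\sqrt[N]{\gamma_{c + 1}(N)} = \sqrt[N]{1} = 1\) (using that \(N\) has nilpotency class exactly \(c\) and is torsion-free), yields \([C, N] = 1\), so \(C\) is central in \(N\). Moreover, \(C\) is fully characteristic, and by \cref{lem:PropertiesALCS}(2) the quotient \(N / C\) is \fgtf nilpotent of class at most \(c - 1\), hence residually finite and of type \((F)\). Applying \cref{prop:productFormulaCentralExtension} gives
\[
	R(\psi) = R(\restr{\psi}{C}) \cdot R(\bar{\psi}) = \infty,
\]
so at least one of the two factors is infinite. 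If \(R(\bar{\psi}) = \infty\), the induction hypothesis produces a non-trivial fixed point of \(\bar{\psi}\) on \(N/C\), which \cref{lem:LiftingFixedPointsCentralQuotient} then lifts to a non-trivial fixed point of \(\psi\) on \(N\). If instead \(R(\restr{\psi}{C}) = \infty\), then the base case applied to the \fgtf abelian group \(C\) supplies a non-trivial fixed point of \(\restr{\psi}{C}\), which is a fortiori a non-trivial fixed point of \(\psi\).

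The main conceptual hurdle is the lifting step, but \cref{lem:LiftingFixedPointsCentralQuotient} has been set up precisely for this purpose. The verification that \(C\) is central is routine but essential, combining the ALCS machinery with torsion-freeness of \(N\); the application of \cref{prop:productFormulaCentralExtension} then cleanly splits the contribution of \(C\) from that of \(N/C\) and drives the induction.
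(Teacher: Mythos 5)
Your proof is correct and runs on the same core machinery as the paper's: the inner-automorphism trick (\cref{lem:PropertiesTwistedConjugacyCompositionWithInnerAutomorphism}), \cref{prop:ReidemeisterNumbersFreeAbelianGroups} for the abelian base, and \cref{lem:LiftingFixedPointsCentralQuotient} to pass a fixed point up a central extension. The organization, however, is genuinely different. The paper applies the full product formula (\cref{theo:productFormulaFGTFNilpotentGroups}) to the adapted lower central series \(1 = N_{0} \lhd \ldots \lhd N_{c} = N\) to locate an index \(i\) with \(R(\phi_{i}) = \infty\), produces a non-trivial fixed point on \(N / N_{i - 1}\), and then runs a second, inner induction on \(i\) to climb the remaining central quotients up to \(N\). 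You instead induct once, on the nilpotency class: you peel off the single top central layer \(C = \sqrt[N]{\gamma_{c}(N)}\), check centrality via \cref{lem:PropertiesALCS}(3) together with torsion-freeness, invoke the one-step \cref{prop:productFormulaCentralExtension}, and then either finish inside the abelian group \(C\) or apply the inductive hypothesis to \(N / C\) (of strictly smaller class, since \(\gamma_{c}(N) \leq C\)) and lift once. The two arguments traverse the same chain of central extensions; yours trades the paper's pair of nested inductions for a single one, lets the abelian criterion live in an honest base case, and makes visible that the result needs only the one-step central-extension product formula rather than the full \cref{theo:productFormulaFGTFNilpotentGroups}. Everything checks out.
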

\begin{proof}
	Fix \(\phi \in \End(N)\) with \(R(\phi) = \infty\). We start by proving that \(\phi\) has a non-trivial fixed point. Let
	\[
		1 = N_{0} \lhd N_{1} \lhd \ldots \lhd N_{c} = N
	\]
	be the adapted lower central series. Let \(\phi_{i}\) denote the induced endomorphism on \(\frac{N_{i}}{N_{i - 1}}\). By \cref{theo:productFormulaFGTFNilpotentGroups}, there exists an \(i \in \{1, \ldots, c\}\) such that \(R(\phi_{i}) = \infty\). Since \(N_{i} / N_{i - 1}\) is \fgtf abelian, \(\phi_{i}\) has a non-trivial fixed point by \cref{prop:ReidemeisterNumbersFreeAbelianGroups}. It follows that the induced endomorphism on \(N / N_{i - 1}\), denoted by \(\psi_{i}\), has a non-trivial fixed point as well. We prove by induction on \(i\) that we can lift this fixed point to a non-trivial fixed point of \(\phi\).
	
	If \(i = 1\), then \(\psi_{1}\) is essentially \(\phi\), which immediately proves the result.
	
So, suppose that the result holds for \(i\), which means that we can lift a non-trivial fixed point of \(\psi_{i}\) to one of \(\phi\), and assume that \(\psi_{i + 1} \in \End(N / N_{i})\) has a non-trivial fixed point. Consider the short exact sequence
	\[
		1 \to \frac{N_{i}}{N_{i - 1}} \to \frac{N}{N_{i - 1}} \to \frac{N / N_{i - 1}}{N_{i} / N_{i - 1}} \to 1.
	\]
	We have the endomorphism \(\psi_{i}\) on the middle term of this sequence. Let \(\bar{\psi}_{i}\) denote the induced endomorphism on the last (non-trivial) term in the sequence. By the third isomorphism theorem and the definition of \(\bar{\psi}_{i}\) and \(\psi_{i + 1}\), the following square commutes:
	\[
		\begin{tikzcd}
			\frac{N / N_{i - 1}}{N_{i} / N_{i - 1}} \arrow{r}{\bar{\psi}_{i}} \arrow{d}{\cong}	&	\frac{N / N_{i - 1}}{N_{i} / N_{i - 1}} \arrow{d}{\cong}	\\
			\frac{N}{N_{i}}	\arrow{r}{\psi_{i + 1}}	&	\frac{N}{N_{i}}
		\end{tikzcd}
	\]
	Consequently, \(\bar{\psi}_{i}\) has a non-trivial fixed point. Now, remark that \(N_{i} / N_{i - 1}\) is a central subgroup of \(N / N_{i - 1}\) and that \(\frac{N / N_{i - 1}}{N_{i} / N_{i - 1}}\) is torsion-free. Therefore, \cref{lem:LiftingFixedPointsCentralQuotient} implies that \(\psi_{i} \in \End(N / N_{i - 1})\) has a non-trivial fixed point as well. The induction hypothesis now yields a non-trivial fixed point for \(\phi\). As \(N\) is torsion-free, \(\Fix(\phi)\) is infinite.

Finally, let \(n \in N\) be arbitrary. By \cref{lem:PropertiesTwistedConjugacyCompositionWithInnerAutomorphism}, \(\Stab_{\phi}(n) = \Fix(\inn{n} \circ \phi)\) and \(R(\inn{n} \circ \phi) = R(\phi)\). Therefore, we can apply the argument above to \(\inn{n} \circ \phi\) to obtain that \(\Fix(\inn{n} \circ \phi) = \Stab_{\phi}(n)\) is infinite for all \(n \in N\).
\end{proof}

\begin{remark}
	This strong converse implication does not hold in general for \fg residually finite groups. Let \(n \geq 2\) be an integer and let \(F_{n}\) be the free group of rank \(n\) with generators \(x_{1}, \ldots, x_{n}\). S.\ Gersten proves that the automorphism of \(F_{n}\) given by
	\[
		\phi(x_{i}) = x_{i + 1}x_{i}, i \in \{1, \ldots, n - 1\}	\quad \text{and} \quad \phi(x_{n}) = x_{1}
	\]
	has trivial fixed-point subgroup \cite[\S 6]{Gersten84}. On the other hand, \(F_{n}\) has the \(\Rinf\)-property \cite{DekimpeGoncalves14, Felshtyn01}, so \(R(\phi) = \infty\).
	
	Another examples involves \(K := \grppres{a, b}{ba\inv{b} = \inv{a}}\), the fundamental group of the Klein bottle.
	The map \(\phi: K \to K\) defined by \(\phi(a) = \inv{a}\) and \(\phi(b) = \inv{b}\) is an automorphism.
	As \(K\) has the \(\Rinf\)-property by \cite[Theorem~2.2]{GoncalvesWong09}, \(R(\phi) = \infty\).
	However, \(\Fix(\phi)\) is trivial.
	Indeed, consider the subgroup \(A := \grpgen{a}\).
	This subgroup is normal and \(\phi\)-invariant, and both \(A\) and \(K / A\) are isomorphic with \(\Z\). 
	Moreover, the induced maps \(\restr{\phi}{A}\) and \(\bar{\phi}\) on \(A\), respectively \(K / A\), are both given by the inversion map.
	Hence, they only have trivial fixed points.
	Consequently, \(\phi\) only has trivial fixed points as well.
	
	There is, however, a twisted stabiliser of \(\phi\) that is infinite.
	Remark that \((\inn{b} \circ \phi)(a) = b \inv{a} \inv{b} = a\).
	Therefore, \(\Stab_{\phi}(a) = \Fix(\inn{b} \circ \phi)\) contains \(\grpgen{a}\), which is infinite.
\end{remark}
These examples raise the following question:
\begin{quest}
	Let \(G\) be a residually finite group of type \((F)\). Suppose that \(\phi \in \End(G)\) has infinite Reidemeister number. Is there necessarily a \(g \in G\) such that \(\Stab_{\phi}(g)\) is infinite?
\end{quest}

\subsection{Application 3: Reidemeister numbers on finite index subgroups}
In general, Reidemeister numbers do not behave well under taking finite index subgroups, even on \fgtf groups.
Consider again \(K = \grppres{a, b}{ba\inv{b} = \inv{a}}\), the fundamental group of the Klein bottle.
Its centre is given by \(\grpgen{a, b^{2}}\), has finite index in \(K\), and is isomorphic with \(\Z^{2}\).
Let \(\phi: K \to K\) be again the automorphism determined by \(\phi(a) = \inv{a}\) and \(\phi(b) = \inv{b}\).
The induced automorphism on \(Z(K)\) is equal to \(-\Id_{\Z^{2}}\), which has Reidemeister number equal to \(4\) by \cref{prop:ReidemeisterNumbersFreeAbelianGroups}.
However, \(R(\phi) = \infty\), since \(K\) has the \(\Rinf\)-property as mentioned earlier.

On \fgtf nilpotent groups, on the other hand, Reidemeister numbers behave well under taking finite index subgroups.
We start with recalling the notion of Hirsch length and proving the special case of \fgtf abelian groups.

For a polycyclic group \(G\), we let \(\hirsch(G)\) denote the \emph{Hirsch length of \(G\)}, that is, the number of infinite cyclic factors in a polycyclic series of \(G\).
\begin{lemma}[{\cite[Exercise~8]{Segal83}}]	\label{lem:HirschLengthProperties}
	Let \(G\) be a polycyclic group, \(H\) a subgroup of \(G\), and \(N\) a normal subgroup of \(G\). Then
	\begin{enumerate}[(1)]
		\item \(\hirsch(H) \leq \hirsch(G)\) with equality if and only if \(H\) has finite index in \(G\).
		\item \(\hirsch(G) = \hirsch(N) + \hirsch(G / N)\).
	\end{enumerate}
\end{lemma}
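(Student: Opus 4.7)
The plan is to argue directly from the definitions using polycyclic series, invoking the well-known fact—usually derived from Schreier's refinement theorem—that \(\hirsch(G)\) does not depend on the chosen polycyclic series. With this at hand, both statements reduce to bookkeeping on suitably chosen series.

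I would first prove (2) as it is the cleaner of the two. Fix polycyclic series \(1 = N_0 \lhd \ldots \lhd N_k = N\) of \(N\) and \(1 = Q_0 \lhd \ldots \lhd Q_\ell = G/N\) of \(G/N\). Pulling the latter back through the projection \(\pi \colon G \to G/N\) and concatenating gives
\[
    1 = N_0 \lhd \ldots \lhd N_k = \inv{\pi}(Q_0) \lhd \inv{\pi}(Q_1) \lhd \ldots \lhd \inv{\pi}(Q_\ell) = G,
\]
a polycyclic series of \(G\) whose factors are exactly the \(N_i/N_{i-1}\) together with the \(\inv{\pi}(Q_j)/\inv{\pi}(Q_{j-1}) \cong Q_j/Q_{j-1}\). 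Counting the infinite cyclic ones then yields \(\hirsch(G) = \hirsch(N) + \hirsch(G/N)\).

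For (1), fix a polycyclic series \(1 = G_0 \lhd G_1 \lhd \ldots \lhd G_n = G\) and set \(H_i := G_i \cap H\). Then \(H_i \lhd H_{i+1}\) and the second isomorphism theorem identifies \(H_{i+1}/H_i\) with the subgroup \(H_{i+1} G_i / G_i\) of \(G_{i+1}/G_i\). Since subgroups of finite cyclic groups are finite and subgroups of \(\Z\) are either trivial or isomorphic to \(\Z\), the intersections form a polycyclic series of \(H\) satisfying \(\hirsch(H) \leq \hirsch(G)\). For the equality characterisation, I would establish by induction on \(n\), using the identity \([G:H] = [G : HG_{n-1}] \cdot [G_{n-1} : H_{n-1}]\) in the inductive step, the formula
\[
    [G:H] = \prod_{i=0}^{n-1}[G_{i+1}/G_i : H_{i+1}G_i/G_i].
\]
Each factor is automatically finite when \(G_{i+1}/G_i\) is finite, and in the infinite-cyclic case it is finite precisely when \(H_{i+1}/H_i\) is non-trivial, hence isomorphic to \(\Z\). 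Therefore \([G:H]\) is finite if and only if \(H_{i+1}/H_i \cong \Z\) for every \(i\) with \(G_{i+1}/G_i \cong \Z\), which is exactly the condition \(\hirsch(H) = \hirsch(G)\). The main technical obstacle is handling the index formula when some factors may a priori be infinite; a clean workaround is to prove the equivalent statement ``\([G:H]\) is finite if and only if each factor \([G_{i+1}/G_i : H_{i+1}G_i/G_i]\) is finite'' by induction, which sidesteps any cardinality issues.
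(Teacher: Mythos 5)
The paper does not prove this lemma; it is cited directly from Segal's textbook (\cite[Exercise~8]{Segal83}), so there is no proof in the paper to compare against. Your argument is the standard one and it is correct: part~(2) follows by pulling back a polycyclic series of \(G/N\) along the projection and concatenating with one of \(N\), using the third isomorphism theorem to identify the factors; part~(1) follows by intersecting a polycyclic series of \(G\) with \(H\), using the second isomorphism theorem to embed \(H_{i+1}/H_i\) as a subgroup of the cyclic factor \(G_{i+1}/G_i\), and then running the index telescoping \([G:H]=[G:HG_{n-1}][G_{n-1}:H_{n-1}]\) inductively. Your caution about the index formula for possibly infinite factors is well placed, and the workaround you suggest (prove the finiteness equivalence by induction rather than the numerical identity outright) is exactly the clean way to handle it; note that the identity \([HG_{n-1}:H]=[G_{n-1}:H_{n-1}]\) itself holds for arbitrary cardinalities because normality of \(G_{n-1}\) gives a genuine bijection of coset spaces. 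The only ingredient you rely on without proving is the series-independence of the Hirsch length, which you correctly flag as a consequence of the Schreier refinement theorem.
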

Since \fgtf nilpotent groups are polycyclic, we can use the notion Hirsch length when working with them.

\begin{lemma}	\label{lem:IndexImageFiniteIndexSubgroup}
	Let \(A\) be a \fgtf abelian group and let \(H\) be a finite index subgroup. Let \(\psi \in \End(A)\) be such that \(\psi(H) \leq H\). Then \([A : \psi(A)] = [H : \psi(H)]\).
\end{lemma}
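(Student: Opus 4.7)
The plan is to split into two cases depending on whether $\psi$ is injective.

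Assume first that $\psi$ is injective. Then $\psi$ restricts to an isomorphism $A \to \psi(A)$ which sends $H$ bijectively onto $\psi(H)$. This gives an induced isomorphism of quotients $A/H \cong \psi(A)/\psi(H)$, so $[\psi(A) : \psi(H)] = [A : H]$, which is finite by hypothesis. Now I would apply multiplicativity of the index along the two refinements of $\psi(H) \leq A$:
\begin{equation*}
    [A : \psi(A)] \cdot [\psi(A) : \psi(H)] = [A : \psi(H)] = [A : H] \cdot [H : \psi(H)].
\end{equation*}
Substituting $[\psi(A) : \psi(H)] = [A : H]$ and cancelling the finite factor $[A : H]$ yields $[A : \psi(A)] = [H : \psi(H)]$.

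Now suppose $\psi$ is not injective. Then $\ker \psi$ is a nonzero subgroup of the torsion-free abelian group $A$, so $\psi(A) \cong A / \ker \psi$ has strictly smaller rank than $A$. By \cref{lem:HirschLengthProperties}, the quotient $A / \psi(A)$ has positive Hirsch length and is therefore infinite, so $[A : \psi(A)] = \infty$. To see that $[H : \psi(H)] = \infty$ as well, I would observe that $H \cap \ker \psi$ has finite index at most $[A : H]$ inside $\ker \psi$, hence has the same positive rank as $\ker \psi$ and is in particular nonzero. Thus $\psi|_H$ is not injective either, and the same rank/Hirsch length argument applied to $H$ yields $[H : \psi(H)] = \infty$.

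There is no real obstacle: the injective case is a direct application of multiplicativity of the index after the key observation that $\psi$ identifies the pair $(A, H)$ with $(\psi(A), \psi(H))$. The only subtlety is the non-injective case, where one must guarantee that both indices are simultaneously infinite so that the equality makes sense; this is handled cleanly by Hirsch length once one notes that a finite index subgroup of the torsion-free group $A$ intersects every nonzero subgroup in a subgroup of the same rank.
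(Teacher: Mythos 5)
Your proof is correct and rests on the same two pillars as the paper's: multiplicativity of the index along the chain \(\psi(H) \leq \psi(A) \leq A\) (equivalently \(\psi(H) \leq H \leq A\)), and a Hirsch-length/rank argument to handle the infinite case via injectivity of \(\psi\). The only difference is organizational: you split on whether \(\psi\) is injective and derive finiteness or infiniteness of the indices from that, whereas the paper first shows directly from the index identity and the inequality \([\psi(A) : \psi(H)] \leq [A : H]\) that the two indices are simultaneously finite or infinite, and only then invokes Hirsch length to get injectivity in the finite case; both routes are equally valid and comparably short.
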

\begin{proof}
	Since \([A : H]\) is finite, we can write
	\begin{equation}	\label{eq:indexFormula[HpsiH]}
		[H : \psi(H)] = \frac{[A : \psi(H)]}{[A : H]} = \frac{[A : \psi(A)][\psi(A) : \psi(H)]}{[A : H]}.
	\end{equation}
	It is well known that \([\psi(A) : \psi(H)] \leq [A : H]\). Therefore, \([H : \psi(H)] = \infty\) if and only if \([A : \psi(A)] = \infty\), which proves they are equal in that case.
	
	So, suppose that both \([A : \psi(A)]\) and \([H : \psi(H)]\) are finite. This means in particular that \(\psi\) is injective. Indeed, we have the exact sequence
	\[
		0 \to \ker \psi \to A \to \im \psi \to 0
	\]
	of finitely generated abelian groups. Since \(\psi(A)\) has finite index in \(A\), the Hirsch length \(\hirsch(\im \psi)\) of \(\im \psi\) equals that of \(A\) by \cref{lem:HirschLengthProperties}. It follows from the same lemma that \(\hirsch(\ker \psi) = \hirsch(A) - \hirsch(\im \psi) = 0\). Thus, \(\ker \psi\) is a finite subgroup of a torsion-free group, which implies that \(\ker \psi = 0\). For injective homomorphisms, equality holds in \([\psi(A) : \psi(H)] \leq [A : H]\). Therefore, \eqref{eq:indexFormula[HpsiH]} reduces to \([H : \psi(H)] = [A : \psi(A)]\). 
\end{proof}

\begin{prop}	\label{prop:ReidemeisterNumberFiniteIndexSubgroupFreeAbelian}
	Let \(A\) be a \fgtf abelian group and let \(H\) be a finite index subgroup. Suppose that \(\phi \in \End(A)\) is such that \(\phi(H) \leq H\). Let \(\restr{\phi}{H}\) denote its restriction to \(H\). Then \(R(\restr{\phi}{H}) = R(\phi)\).
\end{prop}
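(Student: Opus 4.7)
The plan is to derive this as an immediate consequence of \cref{prop:ReidemeisterNumbersFreeAbelianGroups} and \cref{lem:IndexImageFiniteIndexSubgroup}. Recall that the former identifies the Reidemeister number of an endomorphism of a \fgtf abelian group with the index of the image of (endomorphism minus identity), and the latter says that for such an endomorphism \(\psi\) preserving a finite-index subgroup, the index of \(\psi(A)\) in \(A\) equals the index of \(\psi(H)\) in \(H\).

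First I would set \(\psi := \phi - \Id_{A} \in \End(A)\), which is a genuine endomorphism because \(A\) is abelian. Since \(\phi(H) \leq H\) and clearly \(\Id_{A}(H) = H\), we get \(\psi(H) \leq H\), so \(\psi\) restricts to an endomorphism of \(H\), and this restriction is exactly \(\restr{\phi}{H} - \Id_{H}\). Applying \cref{lem:IndexImageFiniteIndexSubgroup} to \(\psi\) gives
\[
    [A : (\phi - \Id_{A})(A)] = [H : (\restr{\phi}{H} - \Id_{H})(H)].
\]
Invoking \cref{prop:ReidemeisterNumbersFreeAbelianGroups} on both sides then yields \(R(\phi) = R(\restr{\phi}{H})\).

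There is no real obstacle here; the whole point is that \cref{lem:IndexImageFiniteIndexSubgroup} was set up precisely so that this translation is automatic. The only subtlety worth spelling out is the verification that the restriction of \(\phi - \Id_{A}\) to \(H\) really equals \(\restr{\phi}{H} - \Id_{H}\) as an endomorphism of \(H\), which is trivial but is what makes the two images on the two sides of the index equation match up.
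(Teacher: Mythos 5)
Your proof is correct and is essentially identical to the paper's: both set $\psi = \phi - \Id_A$, apply \cref{lem:IndexImageFiniteIndexSubgroup} to $\psi$, and translate via \cref{prop:ReidemeisterNumbersFreeAbelianGroups}. You simply spell out the (trivial) verification that $\psi$ restricts to $\restr{\phi}{H} - \Id_H$, which the paper leaves implicit.
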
	
\begin{proof}
	We know by \cref{prop:ReidemeisterNumbersFreeAbelianGroups} that \(R(\phi) = [A : \im(\phi - \Id_{A})]\) and \(R(\restr{\phi}{H}) = [H : \im(\restr{\phi}{H} - \Id_{H})]\). Therefore, applying \cref{lem:IndexImageFiniteIndexSubgroup} to \(\phi - \Id\), we conclude that \({[H : \im(\restr{\phi}{H} - \Id_{H})]} = [A : \im(\phi - \Id_{A})]\).
\end{proof}
\begin{theorem}	\label{theo:ReidemeisterNumberEndomorphismFiniteIndexNormalSubgroupFGTFNilpotentSubgroupEqualsReidemeisterNumber}
	Let \(N\) be a \fgtf nilpotent group and let \(H\) be a finite index subgroup. Let \(\phi \in \End(N)\) be such that \(\phi(H) \leq H\). Let \(\restr{\phi}{H}\) denote its restriction to \(H\). Then \(R(\restr{\phi}{H}) = R(\phi)\).
\end{theorem}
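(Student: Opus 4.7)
The plan is to combine the product formula (\cref{theo:productFormulaFGTFNilpotentGroups}) with the abelian finite index result (\cref{prop:ReidemeisterNumberFiniteIndexSubgroupFreeAbelian}) by means of the adapted lower central series of \(N\). Let
\[
	1 = N_{0} \lhd N_{1} \lhd \ldots \lhd N_{c} = N
\]
denote the adapted lower central series of \(N\), which by \cref{lem:PropertiesALCS} is a \(\phi\)-ITF-series. I define \(H_{i} := H \cap N_{i}\) and propose to show that \(\{H_{i}\}_{i}\) is a \(\restr{\phi}{H}\)-ITF-series of \(H\). The \(\phi\)-invariance of the \(H_{i}\) is immediate from \(\phi(H) \leq H\) and \(\phi(N_{i}) \leq N_{i}\). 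Since each \(N_{i}\) is normal in \(N\), each \(H_{i}\) is normal in \(H\), and because \([N_{i}, N] \leq N_{i-1}\), one obtains \([H_{i}, H] \leq H \cap N_{i-1} = H_{i-1}\), so the series is central in \(H\). Finally, each factor \(H_{i}/H_{i-1}\) embeds into the torsion-free abelian group \(N_{i}/N_{i-1}\) via the second isomorphism theorem, and thus is itself \fgtf abelian.

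Applying \cref{theo:productFormulaFGTFNilpotentGroups} to both series yields
\[
	R(\phi) = \prod_{i = 1}^{c} R(\phi_{i}) \quad \text{and} \quad R(\restr{\phi}{H}) = \prod_{i = 1}^{c} R(\phi_{i}^{H}),
\]
where \(\phi_{i}\) and \(\phi_{i}^{H}\) are the induced endomorphisms on \(N_{i}/N_{i-1}\) and \(H_{i}/H_{i-1}\) respectively. It therefore suffices to prove \(R(\phi_{i}) = R(\phi_{i}^{H})\) for each \(i\). Under the embedding \(H_{i}/H_{i-1} \cong H_{i} N_{i-1}/N_{i-1} \hookrightarrow N_{i}/N_{i-1}\), the endomorphism \(\phi_{i}^{H}\) is precisely the restriction of \(\phi_{i}\) to this subgroup, so I aim to invoke \cref{prop:ReidemeisterNumberFiniteIndexSubgroupFreeAbelian}. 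For this, I need to verify that \(H_{i} N_{i-1}/N_{i-1}\) has finite index in \(N_{i}/N_{i-1}\).

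The key point, and the main technical step, is a Hirsch length computation. Since \(H\) has finite index in \(N\), the inclusion \(N_{i} \hookrightarrow N\) restricts to an injection \(N_{i}/H_{i} \hookrightarrow N/H\), so \(H_{i}\) has finite index in \(N_{i}\) and hence \(\hirsch(H_{i}) = \hirsch(N_{i})\) by \cref{lem:HirschLengthProperties}. The same lemma then gives
\[
	\hirsch(H_{i} N_{i-1}/N_{i-1}) = \hirsch(H_{i}/H_{i-1}) = \hirsch(H_{i}) - \hirsch(H_{i-1}) = \hirsch(N_{i}) - \hirsch(N_{i-1}) = \hirsch(N_{i}/N_{i-1}).
\]
Since \(H_{i}N_{i-1}/N_{i-1}\) is a subgroup of the finitely generated nilpotent group \(N_{i}/N_{i-1}\) of equal Hirsch length, \cref{lem:HirschLengthProperties} again implies that it has finite index. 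Then \cref{prop:ReidemeisterNumberFiniteIndexSubgroupFreeAbelian} yields \(R(\phi_{i}) = R(\phi_{i}^{H})\), and multiplying these equalities gives \(R(\phi) = R(\restr{\phi}{H})\). The Hirsch length bookkeeping is the only delicate part; everything else is a direct assembly of tools already established in the paper.
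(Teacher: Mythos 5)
Your proof is correct and follows essentially the same route as the paper: intersect a \(\phi\)-ITF-series with \(H\), verify that the intersections form a \(\restr{\phi}{H}\)-ITF-series, apply the product formula to both, and reduce to the free abelian finite-index case via the second isomorphism theorem. The only variation is the final finite-index check: you use a Hirsch-length computation, whereas the paper uses the more direct index chain \([N_{i} : H_{i}N_{i-1}] \leq [N_{i} : H_{i}] = [N \cap N_{i} : H \cap N_{i}] \leq [N : H] < \infty\); both are valid and equally short.
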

\begin{proof}
	Let \(1 = N_{0} \lhd N_{1} \lhd \ldots \lhd N_{c - 1} \lhd N_{c} = N\) be a \(\phi\)-ITF-series. For each \(i \in \{0, \ldots, c\}\), define \(H_{i} := H \cap N_{i}\). We claim that the \(H_{i}\) form a \(\restr{\phi}{H}\)-ITF-series for \(H\).
	
	First, let \(i \in \{0, \ldots, c\}\) be arbitrary. Since \(\phi(N_{i}) \leq N_{i}\) and \(\phi(H) \leq H\), we find that
	\[
		\phi(H_{i}) = \phi(N_{i} \cap H) \leq \phi(N_{i}) \cap \phi(H) \leq N_{i} \cap H = H_{i}.
	\]
	
	Next, we prove that the \(H_{i}\) form a central series. Fix \(i \in \{1, \ldots, c - 1\}\). Clearly, \([H_{i}, H] \leq H\). The fact that the \(N_{i}\) form a central series implies that
	\[
		[H_{i}, H] \leq [N_{i}, N] \leq N_{i - 1}.
	\]
	Both inclusions together yield \([H_{i}, H] \leq H_{i - 1}\). This proves that the \(H_{i}\) form a central series.
	
	Finally, fix again \(i \in \{1, \ldots, c\}\). Since \(N_{i - 1} \leq N_{i}\), we know that
	\[
		H_{i - 1} = H \cap N_{i - 1} = H \cap N_{i} \cap N_{i - 1} = H_{i} \cap N_{i - 1}.
	\]
	Thus, the second isomorphism theorem applied to \(N_{i - 1}\) and \(H_{i}\) as subgroups of \(N_{i}\) yields
	\[
		\frac{H_{i}}{H_{i - 1}} = \frac{H_{i}}{H_{i} \cap N_{i - 1}} \cong \frac{H_{i} N_{i - 1}}{N_{i - 1}} \leq \frac{N_{i}}{N_{i - 1}}.
	\]
	It follows that \(H_{i} / H_{i - 1}\) is torsion-free, because \(N_{i} / N_{i - 1}\) is.
	
	We conclude that \(1 = H_{0} \lhd H_{1} \ldots \lhd H_{c - 1} \lhd H_{c} = H\) is a \(\restr{\phi}{H}\)-ITF-series for \(H\). Therefore, if we let \(\phi_{i}\) denote the induced endomorphism on \(N_{i} / N_{i - 1}\) and \(\phi_{H, i}\) the one on \(H_{i} / H_{i - 1}\) for each \(i \in \{1, \ldots, c\}\), then the product formula yields
	\begin{equation}	\label{eq:ProductFormulaReidemeisterNumbersNandH}
		R(\phi) = \prod_{i = 1}^{c} R(\phi_{i}) \quad \text{ and } \quad R(\restr{\phi}{H}) = \prod_{i = 1}^{c} R(\phi_{H, i}).
	\end{equation}
	We will prove that \(R(\phi_{i}) = R(\phi_{H, i})\) for each \(i \in \{1, \ldots, c\}\), which then implies the result.
	
Let \(i \in \{1, \ldots, c\}\). Let \(f\) be the canonical isomorphism between \(\frac{H_{i}}{H_{i - 1}}\) and \(\frac{H_{i} N_{i - 1}}{N_{i - 1}} \leq \frac{N_{i}}{N_{i - 1}}\). There are two ways to define an endomorphism on \(\frac{H_{i}N_{i - 1}}{N_{i - 1}}\) using \(\phi\): we can either restrict \(\phi_{i}\) to \(\frac{H_{i} N_{i - 1}}{N_{i - 1}}\), since \(\phi(H_{i}) \leq H_{i}\), which yields a map \(\tilde{\phi}_{i}\); or we can use \(f\) and define \(\psi_{i} := f \circ \phi_{H, i} \circ \inv{f}\). This yields the following diagram:
	\[
	\begin{tikzcd}
		\frac{H_i}{H_{i - 1}} \arrow{r}{f} \arrow{d}{\phi_{H, i}} & \frac{H_{i}N_{i - 1}}{N_{i - 1}} \arrow[r, hook] \arrow[shift right,swap]{d}{\psi_{i}} \arrow[shift left]{d}{\tilde{\phi}_{i}}& \frac{N_{i}}{N_{i - 1}} \arrow{d}{\phi_{i}} \\
		\frac{H_{i}}{H_{i - 1}} \arrow{r}{f}                         & \frac{H_{i}N_{i - 1}}{N_{i - 1}} \arrow[r, hook]                    & \frac{N_{i}}{N_{i - 1}}                      
	\end{tikzcd}	
	\]
	We claim that \(\psi_{i} = \tilde{\phi}_{i}\). Indeed, let \(h \in H_{i}\). Then
	\begin{align*}
		\tilde{\phi}_{i}(hN_{i - 1})	&=	\phi_{i}(hN_{i - 1})	\\
							&=	\phi(h)N_{i - 1}	\\
							&=	f(\phi(h)H_{i - 1})	\\
							&=	f(\phi_{H, i}(hH_{i - 1}))	\\
							&=	f(\phi_{H, i}(\inv{f}(hN_{i - 1})))	\\
							&=	\psi_{i}(hN_{i - 1}),
	\end{align*}
	which shows that \(\psi_{i} = \tilde{\phi}_{i}\). Therefore, their Reidemeister numbers are equal and since \(f\) is an isomorphism,
	\[
		R(\phi_{H, i}) = R(\psi_{i}) = R(\tilde{\phi}_{i})
	\]
	as well. Now, \(N_{i} / N_{i - 1}\) is a \fgtf abelian group and \(\frac{H_{i} N_{i - 1}}{N_{i - 1}}\) has finite index in \(\frac{N_{i}}{N_{i - 1}}\), as
	\[
		\left[\frac{N_{i}}{N_{i - 1}} : \frac{H_{i}N_{i - 1}}{N_{i - 1}}\right] = [N_{i} : H_{i} N_{i -1}] \leq [N_{i} : H_{i}] = [N \cap N_{i} : H \cap N_{i}] \leq [N : H] < \infty
	\]
	
	Consequently, \cref{prop:ReidemeisterNumberFiniteIndexSubgroupFreeAbelian} implies that \(R(\phi_{i}) = R(\tilde{\phi}_{i})\). Thus, combining this with \(R(\phi_{H, i}) = R(\tilde{\phi}_{i})\) and with \eqref{eq:ProductFormulaReidemeisterNumbersNandH}, we find that
	\begin{align*}
		R(\restr{\phi}{H})	=	\prod_{i = 1}^{c} R(\phi_{H, i})
						=	\prod_{i = 1}^{c} R(\phi_{i})	
						=	R(\phi),
	\end{align*}
	which proves the theorem.
\end{proof}

For an arbitrary group \(G\), if \(N\) is a characteristic subgroup of finite index with the \(\Rinf\)-property, then \(G\) has the \(\Rinf\)-property as well \cite[Lemma~2.2(ii)]{MubeenaSankaran14}. For \fgtf nilpotent groups, we can prove a stronger version of this.

\begin{cor}	\label{cor:SpecRFiniteIndexCharacteristicSubgroupFGTFNilpotentGroup}
	Let \(N\) be a \fgtf nilpotent group and \(H\) a characteristic subgroup of finite index. Then \(\SpecR(N) \subseteq \SpecR(H)\).
	
	In particular, if \(N\) has full Reidemeister spectrum, then so does \(H\).
\end{cor}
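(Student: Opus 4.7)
The plan is to lift each Reidemeister number achieved by an automorphism of \(N\) to one achieved by an automorphism of \(H\), using \cref{theo:ReidemeisterNumberEndomorphismFiniteIndexNormalSubgroupFGTFNilpotentSubgroupEqualsReidemeisterNumber} as the main engine. Since \(H\) is characteristic in \(N\), any \(\phi \in \Aut(N)\) satisfies \(\phi(H) \leq H\), and applying the same reasoning to \(\inv{\phi} \in \Aut(N)\) gives \(\inv{\phi}(H) \leq H\), hence \(\phi(H) = H\). Therefore \(\restr{\phi}{H}\) is a well-defined automorphism of \(H\).

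Next, I would invoke \cref{theo:ReidemeisterNumberEndomorphismFiniteIndexNormalSubgroupFGTFNilpotentSubgroupEqualsReidemeisterNumber} directly: since \(H\) has finite index in the \fgtf nilpotent group \(N\) and \(\phi(H) \leq H\), we have \(R(\restr{\phi}{H}) = R(\phi)\). Consequently \(R(\phi) \in \SpecR(H)\), and as \(\phi \in \Aut(N)\) was arbitrary, \(\SpecR(N) \subseteq \SpecR(H)\).

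For the second statement, suppose \(\SpecR(N) = \N_{0} \cup \{\infty\}\). The inclusion just proved yields \(\N_{0} \cup \{\infty\} \subseteq \SpecR(H)\). Since the reverse inclusion \(\SpecR(H) \subseteq \N_{0} \cup \{\infty\}\) is trivially true, equality follows and \(H\) has full Reidemeister spectrum.

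There is essentially no obstacle here; the work has all been done in \cref{theo:ReidemeisterNumberEndomorphismFiniteIndexNormalSubgroupFGTFNilpotentSubgroupEqualsReidemeisterNumber}. The only conceptual point worth highlighting is the use of characteristicness to guarantee \(\restr{\phi}{H} \in \Aut(H)\) rather than merely \(\End(H)\); without it one would only obtain \(\SpecR(N) \subseteq \{R(\psi) \mid \psi \in \End(H)\}\), which is weaker than \(\SpecR(H)\).
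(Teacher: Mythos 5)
Your proof is correct and follows the same route as the paper: invoke \cref{theo:ReidemeisterNumberEndomorphismFiniteIndexNormalSubgroupFGTFNilpotentSubgroupEqualsReidemeisterNumber}, using characteristicness to guarantee the restriction lands in \(\Aut(H)\). You even spell out the \(\phi(H)=H\) detail, which the paper leaves implicit.
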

\begin{proof}
	Since \(H\) is characteristic in \(N\), each automorphism \(\phi\) of \(N\) restricts to one of \(H\), say, \(\restr{\phi}{H}\). As \(R(\phi) = R(\restr{\phi}{H})\) by \cref{theo:ReidemeisterNumberEndomorphismFiniteIndexNormalSubgroupFGTFNilpotentSubgroupEqualsReidemeisterNumber}, each element of \(\SpecR(N)\) is contained in \(\SpecR(H)\).
\end{proof}

There are only a few examples known of groups with full Reidemeister spectrum: the groups \(\Z^{n}\) with \(n \geq 2\), the free nilpotent groups \(N_{r, 2}\) of rank \(r\) and class \(2\) with \(r \geq 4\) \cite[\S 4]{DekimpeTertooyVargas20} and the free group of countable infinite rank \cite[\S 5]{DekimpeGoncalves14}. With this corollary, we can find new examples of groups with full Reidemeister spectrum.

For integers \(r \geq 4\) and \(m \geq 1\), we define \(N_{r, 2}(m) := \grpgen{x^{m} \mid x \in N_{r, 2}}\). By construction, for each \(x \in N_{r, 2}\), we have \(x^{m} \in N_{r, 2}(m)\). Then \cref{lem:ConditionFiniteIndexSubgroupFGNilpotentGroups} implies that \(N_{r, 2}(m)\) has finite index in \(N_{r, 2}\). As it is a verbal subgroup, it is characteristic. Since \(\SpecR(N_{r, 2})\) is full,  \cref{cor:SpecRFiniteIndexCharacteristicSubgroupFGTFNilpotentGroup} implies that \(\SpecR(N_{r, 2}(m))\) is full as well.

\begin{prop}
	Let \(r, s \geq 4\) and \(m, n \geq 1\) be integers. Then \(N_{r, 2}(m)\) and \(N_{s, 2}(n)\) are isomorphic if and only if \(r = s\) and \(m = n\).
\end{prop}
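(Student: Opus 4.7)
The ``if'' direction is immediate, so I focus on ``only if'': the plan is to recover $r$ and $m$ as intrinsic invariants of the abstract group $N_{r,2}(m)$. Recovering $r$ is straightforward. As noted immediately before the proposition, $N_{r,2}(m)$ has finite index in $N_{r,2}$, so \cref{lem:HirschLengthProperties} gives
\[
\hirsch(N_{r,2}(m)) = \hirsch(N_{r,2}) = r + \binom{r}{2}.
\]
Since $r \mapsto r + \binom{r}{2}$ is strictly increasing on positive integers, this pins down $r$, and hence also $\binom{r}{2}$.

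To recover $m$, the plan is to compare two intrinsically defined subgroups of $N_{r,2}(m)$: its centre $Z := Z(N_{r,2}(m))$ and its commutator subgroup $Z' := [N_{r,2}(m), N_{r,2}(m)]$. Since $N_{r,2}$ is $2$-step nilpotent with $Z(N_{r,2}) = [N_{r,2}, N_{r,2}] \cong \Z^{\binom{r}{2}}$ (we use $r \geq 4$), a direct centraliser computation against the elements $x_i^m \in N_{r,2}(m)$ shows that $Z = N_{r,2}(m) \cap [N_{r,2}, N_{r,2}]$. Using the $2$-step identity $(gh)^k = g^k h^k [h, g]^{\binom{k}{2}}$ and its consequence $[g^m, h^m] = [g, h]^{m^2}$, I would then identify both subgroups as explicit sublattices of $[N_{r,2}, N_{r,2}]$:
\[
Z = d \cdot [N_{r,2}, N_{r,2}] \quad \text{and} \quad Z' = m^2 \cdot [N_{r,2}, N_{r,2}],
\]
where $d = \gcd(m, \binom{m}{2})$ equals $m$ for $m$ odd and $m/2$ for $m$ even. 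Consequently the intrinsic index
\[
[Z : Z'] = (m^2/d)^{\binom{r}{2}}
\]
equals $m^{\binom{r}{2}}$ for $m$ odd and $(2m)^{\binom{r}{2}}$ for $m$ even. Given $\binom{r}{2}$, this integer determines $m^2/d$; a case split on its parity then recovers $m$ unambiguously.

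The main obstacle is the equality $Z = d \cdot [N_{r,2}, N_{r,2}]$. For the inclusion $\supseteq$, two explicit families do the job: the pure $m$-th powers $z^m$ for $z \in [N_{r,2}, N_{r,2}]$ contribute $m \cdot [N_{r,2}, N_{r,2}]$, while the combinations $(x_i x_j)^m (x_j)^{-m} (x_i)^{-m} = [x_j, x_i]^{\binom{m}{2}}$ contribute $\binom{m}{2} \cdot [N_{r,2}, N_{r,2}]$, and together they span the sublattice of index $d$. For the reverse inclusion, I would verify that the central component of \emph{any} product $g_1^m \cdots g_k^m$ is divisible by $d$: the central component of each individual $g^m$ already lies in $m \cdot [N_{r,2}, N_{r,2}] + \binom{m}{2} \cdot [N_{r,2}, N_{r,2}] = d \cdot [N_{r,2}, N_{r,2}]$, and the cross-commutators picked up when putting such a product into normal form land in $m^2 \cdot [N_{r,2}, N_{r,2}] \subseteq d \cdot [N_{r,2}, N_{r,2}]$.
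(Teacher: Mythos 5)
Your argument is correct, and it recovers \(m\) by a genuinely different route: you read \(m\) off the index \([Z : Z']\) of the commutator subgroup inside the centre, whereas the paper reads it off the maximal torsion order in the abelianisation \(N_{r,2}(m)/\gamma_2(N_{r,2}(m))\). Both arguments recover \(r\) identically via the Hirsch length, and both hinge on determining the sublattice \(Z = N_{r,2}(m) \cap \gamma_2(N_{r,2})\) of \(\gamma_2(N_{r,2}) \cong \Z^{\binom{r}{2}}\). Your answer, \(Z = \gcd\bigl(m, \binom{m}{2}\bigr)\,\gamma_2(N_{r,2})\), is correct, and it in fact corrects an oversight in the paper's proof: the paper asserts that every exponent \(b_{i,j}\) of a central element of \(N_{r,2}(m)\) is a multiple of \(m\), and concludes the maximal torsion order in the abelianisation is exactly \(m\). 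This fails for even \(m\). For instance, with \(m = 2\) the identity \((x_1 x_2)^2 x_2^{-2} x_1^{-2} = [x_2, x_1]\) places \([x_2, x_1]\) in \(N_{r,2}(2)\) with exponent \(1\), and its class in \(\gamma_2(N_{r,2})/4\gamma_2(N_{r,2})\) has order \(4\), not \(m = 2\). The correct maximal torsion order is \(m^2/\gcd\bigl(m,\binom{m}{2}\bigr)\), namely \(m\) for odd \(m\) and \(2m\) for even \(m\); the parity case split you carry out is precisely the extra step needed to read \(m\) off either your invariant \([Z:Z']\) or the paper's torsion-order invariant, and it is missing from the paper. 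Your proposal therefore both provides an alternative proof and supplies a correction to the published argument.
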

\begin{proof}	
	We first look at the Hirsch length of \(N_{r, 2}(m)\). As \(N_{r, 2}(m)\) has finite index in \(N_{r, 2}\), their Hirsch lengths coincide by \cref{lem:HirschLengthProperties}. Since the Hirsch length is additive over exact sequences by the same lemma, we can compute \(\hirsch(N_{r, 2})\) using Witt's formula \cite{Witt37} to find
	\begin{align*}
		\hirsch(N_{r, 2}(m))	&= \hirsch(N_{r, 2})	\\
						&= \hirsch(N_{r, 2} / \gamma_{2}(N_{r, 2})) + \hirsch(\gamma_{2}(N_{r, 2}))	\\
						&= r + \frac{r^{2} - r}{2}	\\
						&= \frac{r^{2} + r}{2}.
	\end{align*}
	Since \(r^{2} + r = s^{2} + s\) if and only if \((r - s)(r + s + 1) = 0\), the groups \(N_{r, 2}(m)\) and \(N_{s, 2}(n)\) are non-isomorphic if \(r \ne s\).
	
	Next, consider the abelianisation \(A(r, m) := N_{r, 2}(m) / \gamma_{2}(N_{r, 2}(m))\). We argue that the order of any torsion element in \(A(r, m)\) divides \(m\) and that \(A(r, m)\) contains an element of order \(m\). This implies that \(N_{r, 2}(m)\) and \(N_{s, 2}(n)\) are not isomorphic if \(m \ne n\): without loss of generality, assume \(m < n\). Then \(A(s, n)\) contains an element of order \(n\), whereas \(A(r, m)\) does not, as \(n\) cannot divide \(m\). Since the abelianisations of isomorphic groups are also isomorphic, \(N_{r, 2}(m)\) and \(N_{s, 2}(n)\) cannot be isomorphic.
	
	So, let \(x \in N_{r, 2}(m)\) be such that \(x \gamma_{2}(N_{r, 2}(m))\) has finite order \(d > 1\). This means that \(x^{d} \in \gamma_{2}(N_{r, 2}(m))\). In particular, \(x^{d} \in \gamma_{2}(N_{r, 2})\) as well. As the quotient group \(N_{r, 2} / \gamma_{2}(N_{r, 2})\) is torsion-free, \(x^{d} \gamma_{2}(N_{r, 2}) = \gamma_{2}(N_{r, 2})\) implies that \(x \in \gamma_{2}(N_{r, 2})\).

	Consequently, we can write
	\begin{equation*}	\label{eq:xAsPowersOfCommutators}
		x = \prod_{i < j} [x_{i}, x_{j}]^{b_{i, j}}
	\end{equation*}
	for some \(b_{i, j} \in \Z\) and where \(x_{1}, \ldots, x_{r}\) are free generators of \(N_{r, 2}\).
	Now, any generator of \(\gamma_{2}(N_{r, 2}(m))\) is of the form \([y^{m}, z^{m}]\) for some \(y, z \in N_{r, 2}\).
	Since we work in \(2\)-step nilpotent groups, \([y^{m}, z^{m}] = [y, z]^{m^2}\).
	Consequently, \(\gamma_{2}(N_{r, 2}(m))\) is generated by
	\[
		\{[x_{i}, x_{j}]^{m^2} \mid 1 \leq i < j \leq r\}
	\]
	Futhermore, as \(\gamma_{2}(N_{r, 2})\) is freely generated by \(\{[x_{i}, x_{j}] \mid 1 \leq i < j \leq r\}\), \(\gamma_{2}(N_{r, 2}(m))\) is freely generated by the set above.
	Thus, if \(x^{d} \in \gamma_{2}(N_{r, 2}(m))\), then \(b_{i, j} d \equiv 0 \bmod m^{2}\) for all \(i < j\).
	Since \(d > 1\), we have \(x \notin \gamma_{2}(N_{r, 2}(m))\), so at least one \(b_{i, j}\) is not a multiple of \(m^{2}\).
	Let \(\I\) denote the set of tuples \((i, j)\) for which \(b_{i, j} \not \equiv 0 \bmod m^{2}\).
	
	Note that the only commutators in \(N_{r, 2}(m)\) are of the form \([y, z]^{m}\) for some \(y, z \in N_{r, 2}\).
	Thus, since \(x \in N_{r, 2}(m)\), \(b_{i, j}\) is a multiple of \(m\) for each \((i, j) \in \I\), say \(b_{i, j} = c_{i, j}m\) for some \(c_{i, j} \in \Z\).
	As \(b_{i, j}d \equiv 0 \bmod m^{2}\) for all \((i, j) \in \I\), this implies that \(c_{i, j}d \equiv 0 \bmod m\) for all \((i, j) \in \I\).
	Consequently, \(d\) is divisible by \(d_{i, j} := \frac{m}{\gcd(c_{i, j}, m)}\) for all \((i, j) \in \I\).
	Since \(d\) is the smallest positive integer for which \(x^{d} \in \gamma_{2}(N_{r, 2}(m))\), \(d\) must be the least common multiple of those \(d_{i, j}\).
	This in particular implies that \(d\) divides \(m\).
	
	Finally, the element \([x_{1}, x_{2}]^{m}\) lies in \(N_{r, 2}(m)\) and its projection in \(A(r, m)\) has order \(m\): for this element, the set \(\I\) equals \(\{(1, 2)\}\) and \(c_{1, 2} = 1\). Therefore, our claim is proven.
\end{proof}

The groups \(N_{r, 2}(m)\) thus provide an infinite family of non-isomorphic groups with full Reidemeister spectrum.

\printbibliography
\end{document}